\documentclass[11pt]{article}
\usepackage{graphicx}
\usepackage[ruled,vlined]{algorithm2e}
\usepackage{amsmath,amssymb}
\usepackage{amsthm}
\usepackage[a4paper,margin=2cm]{geometry}
\usepackage{newtxtext}
\usepackage{setspace}
\usepackage{hyperref}
\usepackage{graphicx}
\usepackage{multirow,booktabs,makecell}
\usepackage{longtable}
\usepackage{tabularx}
\usepackage{pdflscape}
\usepackage[natbibapa]{apacite}
\makeatletter\def\NAT@sort{\z@}\makeatother
\usepackage{xcolor}
\hypersetup{colorlinks=true,linkcolor=blue!60!black,citecolor=blue!60!black,urlcolor=blue!60!black}
\allowdisplaybreaks
\makeatletter\NAT@longnamesfalse\makeatother

\usepackage[]{authblk}
\renewcommand{\Affilfont}{\normalsize}

\usepackage{mathtools}
\usepackage{array}
\usepackage{amsthm}
\newcolumntype{P}[1]{>{\centering\arraybackslash}p{#1}}
\newtheorem{proposition}{Proposition}
\newtheorem{lemma}[proposition]{Lemma}

\theoremstyle{remark}

\title{%
Designing battery swapping and charging networks \\ for electric freight transport under demand uncertainty
}
\date{}
\begin{document}
\setlength{\affilsep}{2em}
\author[*1,2]{Yujia Xu}
\author[3,1]{Walid Klibi}
\author[1,2]{Benoit Montreuil}

\affil[1]{Physical Internet Center, Supply Chain \& Logistics Institute, \protect\\ Georgia Institute of Technology, Atlanta GA, USA}
\affil[2]{\Affilfont School of Industrial \& Systems Engineering, Georgia Institute of Technology, Atlanta GA, USA}
\affil[3]{\Affilfont Centre of Excellence for Supply Chain Innovation \& Transportation (CESIT) \protect\\
Kedge Business School, Bordeaux, France}
\affil[*]{\textit{Corresponding author: yujiaxu.a@gmail.com}}

\maketitle

\begin{abstract}
Long-haul truck electrification requires rapid energy replenishment near freight demand, although dispersed hubs may lack sufficient grid capacity. We study a ``Swap-Locally, Charge-Centrally'' architecture embedded in a relay freight network: swapping remains close to freight flows, charging can be shifted to grid-suitable hubs, and co-location is retained where economical. A multi-period, two-stage stochastic location--inventory--fleet model coordinates station deployment, replenishment assignments and grid-bounded charging capacity with scenario-dependent battery stocks, batch sizes and incremental fleet requirements. Corridor electrification competes with temporary diesel operation. Building on submodularity-based constraint generation for location--inventory systems, we give an alternative scalar partial-minimization proof that the battery value function is submodular, show that the fleet recourse is affine in the binary swap-station decisions, and develop a Lov\'asz-cut Benders decomposition with hub--period block cuts and a convex battery oracle with analytic gradients and memoization. On a seven-state southeastern U.S. freight network, the method reaches gaps below 0.01\% on all five instances, the largest in about four minutes, whereas a strengthened extensive formulation reaches this tolerance only on the two smallest instances within three hours. Stochastic planning avoids the capacity shortfalls of a mean-value plan, and joint planning avoids the battery and fleet costs that sequential siting shifts downstream. The optimized network uses remote charging selectively, keeps most replenishment local, and expands geographic coverage before intensifying capacity within its committed deployment plan.

\end{abstract}

\textbf{Keywords}: Logistics; Stochastic programming; Electric trucks; Benders decomposition; Submodularity

\section{Introduction}

\subsection{Motivation}
Decarbonizing long-haul road freight is an important part of the transition toward low-emission transportation. Heavy-duty trucks are particularly challenging to electrify because they travel long distances, consume large amounts of energy, and operate on demanding schedules with limited time for charging. Nevertheless, the potential environmental benefits are substantial. Under its Net Zero Emissions by 2050 Scenario, the International Energy Agency estimates that electric heavy-duty vehicles, including trucks and buses, could avoid more than 2 billion metric tonnes of cumulative greenhouse-gas emissions between 2025 and 2035 compared with continued reliance on conventional vehicles \citep{IEA2026EVOutlook}. Realizing these benefits will require reliable and time-efficient energy-replenishment infrastructure that can support long-distance freight operations without significantly reducing vehicle availability.

Conventional plug-in charging remains a major barrier to long-haul truck electrification. Heavy-duty trucks carry large batteries and operate under tight schedules, so restoring sufficient range within limited dwell times requires very high charging power. When multiple trucks charge simultaneously, logistics hubs can generate substantial peak electricity demand and may require costly grid connections, transformers, and charging equipment \citep{icct2025charging}. Consequently, widespread deployment of high-power charging may be economically prohibitive, particularly because locations that are convenient for freight operations often lack sufficient grid capacity.

Battery swapping offers an alternative by allowing depleted batteries to be replaced within minutes and recharged separately from truck operations, thereby reducing vehicle downtime and enabling more flexible charging schedules. CATL's QIJI Energy system demonstrates the technical feasibility of this approach through standardized 171-kWh battery blocks that can be combined according to route, load, and operating requirements and exchanged using an automated chassis-level system \citep{CATL2023Qiji}. Swapping alone does not remove the grid problem, however: if every swapping station also charges its own packs, high-power equipment and electrical capacity must still be spread across dispersed sites. Swapping is best located close to freight demand, whereas large-scale charging is better concentrated where grid capacity is available.

Research on electric freight transportation has examined fleet electrification, charging-infrastructure deployment, and the integrated routing and charging of electric trucks \citep{schiffer2021integrated,alp2022transitioning,bragin2024toward}, but it remains centered on plug-in charging and does not represent modular batteries as circulating assets that are exchanged at one hub and recharged at another. Studies of heavy-duty battery swapping address vehicle economics, battery management, and station-level operations \citep{zhu2023does,deng2023battery,wang2024comprehensive} rather than the integrated design of a long-haul swapping, charging, battery-inventory, and fleet system.

\subsection{Problem Statement}
\label{sec:gap} 
\label{sec:statement}
To address this spatial mismatch, we consider a two-tier energy-replenishment architecture for heavy-duty electric trucks based on the ``Swap-Locally, Charge-Centrally'' (SLCC) concept \citep{qi2023scaling}. Battery Swapping Stations (BSSs) at logistics hubs along freight corridors provide rapid battery replacement close to freight demand, while Centralized Charging Stations (CCSs) at grid-suitable hubs perform large-scale charging. Each active BSS is assigned to one CCS, which recharges its depleted batteries and returns them as charged inventory, by intra-facility handling when the two are co-located and by batched inter-hub transport otherwise. At hubs without swapping, trucks recharge their onboard batteries on site, which lengthens dwell and may require additional trucks.

We embed SLCC in a hyperconnected logistics network, whose hub-based structure supports shared freight and energy services and the pooling of resources and modular capacity across hubs \citep{montreuil2011toward,crainic2023hyperconnected,liu2025dynamiccontainerized,xu2026dynamic}. Freight follows predetermined paths through logistics hubs under relay-based truck operations, so trucks already stop at intermediate hubs for freight transfer \citep{liu2025dynamic}. We take this relay structure, which is decided in hub-network design, as an input and study how energy infrastructure should be added to it; because loads, trailers, or drivers are exchanged at these stops whatever energy technology is used, the model does not optimize route deviations toward equipped hubs. The coverage and capacity findings are therefore conditional on the given relay structure.

We interpret the system as an Electrification-as-a-Service (EaaS) platform, in which an operator bundles vehicles, batteries, and charging infrastructure so that carriers adopt electric operations without owning the underlying assets, a model already deployed commercially in fleet electrification \citep{mckinsey2024taas}. We take the perspective of an integrated EaaS operator or stakeholder consortium that minimizes the system cost of electric operation, measured relative to a common freight-service baseline; the allocation of costs among individual stakeholders lies outside the scope of the study. The operator's decisions are coupled through two mechanisms. First, the facility configuration determines whether trucks swap batteries or recharge on site, and hence hub dwell times and fleet requirements. Second, BSS-to-CCS assignments determine replenishment lead times, which shape battery turnaround, stock requirements, batch sizes, and transshipment costs; concentrating charging at grid-suitable hubs reduces infrastructure duplication but lengthens these replenishment cycles.

Uncertainty in long-haul freight demand amplifies this coordination problem. Facility activation, BSS-to-CCS assignments, charging capacity, and corridor electrification levels must be committed before demand is observed, whereas battery stocks, replenishment batch sizes, and fleet capacity depend on the realized demand. Planning for expected demand may leave battery or fleet capacity short under high demand, while planning for peak demand over-invests. We therefore formulate a two-stage stochastic location, inventory, and fleet-planning model that minimizes the expected incremental system cost subject to approximate per-location battery cycle-service targets, offered-load fleet margins, and charging-energy limits in every demand scenario. Corridor electrification is treated as an economic decision rather than a mandate: demand on corridors not yet equipped for electric service is met by diesel operation at a substitution premium, so the model weighs early infrastructure commitment against temporary diesel continuation under each adoption trajectory.

The closest location--inventory foundation is the joint location and repairable-inventory model of \citet{qi2023scaling} for a static urban SLCC network, which takes the swapping stations as given and represents centralized charging through charging-time parameters rather than the explicit hub-level power budgets considered here. In a long-haul freight network, freight-path demand drives both battery requirements at the hubs and truck workload, and the swap-or-charge choice at each hub ties infrastructure to truck dwell and hence to fleet size, which turns the location--inventory problem into a location--inventory--fleet problem. 
\citet{xu2024network} study station deployment, freight consolidation and routing, battery inventory, and transshipment within hyperconnected hub networks.

 The gap addressed here concerns planning BSS and CCS activation, BSS-to-CCS assignment, and grid-bounded charging capacity jointly with circulating battery stocks, replenishment batch sizes, and path-level freight-fleet capacity when demand is unknown at the time of commitment and corridor electrification competes economically with diesel operation,  and evaluating the combined effects of those decisions on long-haul freight infrastructure.
On the methodological side, square-root safety-stock pooling is a classical submodular structure, with extended-polymatroid descriptions of the associated binary epigraphs \citep{atamturk2008polymatroids,atamturk2020submodularity}. More directly, \citet{qi2023scaling} already establish submodularity with optimized replenishment quantities (Section~\ref{sec:submodular-literature}); our approach builds on this foundation. Section~\ref{sec:submodular-recourse} provides a scalar partial-minimization proof for the battery value function, while the computational development combines this structure with the present freight, fleet and grid-capacity model.

\subsection{Contributions}
This paper makes  three main contributions:
\begin{itemize}
    \item We introduce an integrated planning problem for long-haul freight electrification under uncertainty, which coordinates BSS and CCS deployment, BSS-to-CCS assignments, and grid-bounded charging capacity with scenario-dependent battery inventory, replenishment batches, and freight-fleet capacity. We formulate it as a multi-period, two-stage stochastic program in which assignments determine battery turnaround through a repairable-inventory model, the swap-or-charge choice determines incremental fleet requirements through dwell time and Little's law, and corridor electrification is phased against diesel substitution under node-level grid limits.

    \item We develop a Lov\'asz-cut Benders decomposition for the joint energy-infrastructure, battery-inventory and fleet planning problem on a given relay freight network, building on the submodularity-based location--inventory framework of \citet{qi2023scaling}. A scalar partial-minimization argument provides an alternative derivation of the submodularity of the battery value function and supports hub--period block cuts that dominate cuts aggregated over hubs and periods. The fleet recourse has an exact affine representation. A convex battery oracle with certified error bounds and memoization accelerates recourse evaluation. On five nested instances drawn from the seven-state network, the method reaches the $0.01\%$ target gap on all of them, whereas a strengthened extensive formulation reaches it only on the two smallest within the three-hour limit.

    \item  We provide computational evidence and four planning insights. First, mean-value planning provides a useful siting baseline but underestimates capacity needs and misses assignments required at grid-constrained hubs. Second, sequential siting reduces infrastructure costs at the expense of higher battery and fleet costs. Third, SLCC decouples rapid energy service from grid access, keeping swapping close to freight demand while shifting charging selectively to grid-suitable hubs. Requiring every swapping hub to charge locally increases total cost by 14.5\% in the case study. Fourth, the cost-minimizing horizon-committed plan builds geographic coverage first and expands capacity at existing hubs as adoption matures.
\end{itemize}

\section{Literature Review}
\subsection{Electric Freight Charging and Battery Swapping}

Battery-electric truck adoption is constrained by limited range, long charging times, battery durability, high acquisition costs, and uncertain performance across duty cycles \citep{cheng2024electric}, barriers that weigh most on long-haul operations, where charging dwell reduces vehicle utilization. Because operators may invest in private infrastructure when public charging is insufficient \citep{alp2022transitioning}, infrastructure development and adoption must be coordinated over time; on a southeastern U.S.\ network derived from the Freight Analysis Framework, \citet{kim2026structure} couple Bass diffusion with capital-constrained sequencing of hydrogen refueling stations and show that deployment pace and network position jointly shape adoption. Misaligned facilities and delayed corridor-wide deployment restrict the operability of electric freight networks \citep{otero2026battery}, and large-scale electrification may require substantial power-system upgrades at high-demand freight locations \citep{shoman2023battery}. Optimization studies coordinate electric-truck investment and charging deployment with power-system capacity \citep{ye2024joint} and design capacity-constrained public charging networks for long-distance truck traffic \citep{speth2025optimal}, but remain centered on plug-in charging, in which batteries stay attached to trucks.

Battery-swapping research optimizes charging and swapping schedules \citep{sarker2014optimal}, coordinates battery inventory with vehicle routing and real-time operations \citep{ni2020inventory}, and jointly determines battery procurement and charging \citep{schneider2018optimization}; for heavy-duty trucks, studies address degradation-aware battery management and station configuration \citep{deng2023battery} and state-of-charge-dependent charging \citep{wang2024comprehensive}, and show that viability depends strongly on station utilization \citep{zhu2023does}. At the network level, electric commercial-vehicle studies integrate siting with routing, charging, inventory, and capacity decisions \citep{sun2019robust,schiffer2021integrated,bragin2024toward}, and swapping-specific studies jointly locate swapping and fast-charging facilities \citep{zhang2021joint}. These studies plan against a demand trajectory taken as known, and none treats modular batteries as circulating assets whose assignment, inventory, and replenishment arise when swapping is separated from charging; the closest antecedents, \citet{qi2023scaling} and \citet{xu2024network}, are discussed in Section~\ref{sec:gap}.

A separate body of work designs the hub networks in which such services would be embedded. Planning models for hyperconnected logistics networks \citep{montreuil2011toward,crainic2023hyperconnected} size hub capacity in relay transportation \citep{liu2025dynamic}, allocate containerized modular capacity across supply chain ecosystems \citep{liu2025dynamiccontainerized}, and deploy pooled operating resources \citep{xu2026dynamic}. These models dimension hubs for handling and storage and treat the energy that moves freight as exogenous, whereas energy-facility models locate stations without representing the freight-transfer function that brings trucks to them. The problem studied here sits at their junction: it adds energy replenishment to hubs whose freight role, and the truck stops it already induces, are given.

\subsection{Stochastic Location--Inventory Planning and Fleet Sizing}

Stochastic programming distinguishes here-and-now decisions from recourse actions taken after uncertainty is revealed \citep{birge1997introduction,shapiro2021lectures} and has been widely used to coordinate facility location and inventory. \citet{atamturk2012conic} show that joint location--inventory optimization with stochastic demand and service-level constraints, solved by conic integer programming, improves on sequential planning, and \citet{farahani2015location} review how uncertainty in demand, lead times, and disruptions couples facility configuration, inventory, and service. Applications include closed-loop systems with uncertain forward and return flows \citep{hu2025stochastic}. Relative to this literature, our model links infrastructure and assignment decisions with scenario-dependent battery-inventory and replenishment recourse driven by long-haul freight demand. Fleet sizing adds a service-time dimension: Little's law relates the average number of entities in a stable system to their arrival rate and time in system \citep{little1961proof}, \citet{mahavir2025electric} jointly determine the minimum fleet size and charging capacity of electric fleets for a target service level, and \citet{zackrisson2026quantifying} show that intrashift charging and joint reoptimization of fleet, routing, and charging can serve a given freight workload with fewer trucks. Charging time thus determines fleet capacity, and we use Little's law to link the swap-or-charge choice at each hub to the dwell time and fleet requirement of each path.

\subsection{Submodular Structure and Decomposition in Discrete Design Problems}
\label{sec:submodular-literature}
A concave nondecreasing function of a nonnegative modular sum is submodular, and square-root mean-risk terms of this kind are well understood in discrete optimization: the convex hull of their binary epigraphs is described by extended-polymatroid inequalities, a consequence of \citet{lovasz1983submodular}, with conic quadratic extensions to mixed 0--1 sets \citep{atamturk2008polymatroids,atamturk2020submodularity}, and related formulations support the maximization of concave utility functions of modular sums \citep{ahmed2011maximizing}. Such inequalities also serve as cuts for submodular value functions within decomposition methods. In choice-based facility location, \citet{ljubic2018outer} combine outer-approximation and submodular cuts, and \citet{mendezvogel2023follower} develop cut-generation methods for nested-logit choice; \citet{rahmaniani2017benders} survey the broader Benders literature. The closest work is \citet[Section~5.2]{qi2023scaling}, whose evaluation function already optimizes batch quantities inside a pooled square-root safety stock: their Lemma~8 establishes submodularity through a feasible-solution exchange, Lemma~9 gives the Lov\'asz representation used in a constraint-generation algorithm, and a parametric search over a scalar multiplier evaluates the subproblems. We use the same representation. Our setting differs in three respects: the value functions are indexed by scenario and period inside a two-stage stochastic program with grid-bounded charging capacity and phased electrification, and their per-charging-station constraint family becomes one family per scenario, period and charging hub, which dominates cuts aggregated over hubs and periods (Proposition~\ref{prop:cut-valid}); the battery recourse is combined with a fleet recourse that does not depend on the assignments and is represented exactly by flip-difference cuts; and the subproblems are evaluated as convex programs under a stated variance condition. Our scalar partial-minimization argument offers an alternative derivation of submodularity that connects it to minimization over a lattice \citep{topkis1978minimizing}.

\section{Problem Setting and Modeling Framework}
\label{sec:problem}

This section describes the SLCC network operations, the demand model, and the battery, fleet, and charging-capacity relationships that underlie the optimization model of Section~\ref{sec:two-stage}. Notation is summarized in Table~\ref{Tab:description-two-stage}.
Throughout, durations are converted to days and mean demands to counts per day before optimization. Variances are specified for a one-day reference interval and scaled linearly with interval length (the variance-rate approximation of Section~\ref{sec:capacity}). Parameter tables may report durations in hours or minutes for readability.

\subsection{Network, Demand, and Corridor Activation}
\label{sec:network}

We consider an open-access long-haul freight network operated by the EaaS operator of Section~\ref{sec:statement}. Candidate logistics hubs are connected by predetermined freight paths; trucks operate in relay and stop at intermediate hubs for freight transfer, and routing is not optimized. Each hub may host a BSS, a CCS, or both. At a hub with an active BSS, trucks exchange depleted packs for charged ones; otherwise they recharge on board at the hub's CCS, which takes longer and therefore raises dwell times and fleet requirements. Each active BSS is assigned to one active CCS, which recharges its depleted packs and returns them as charged inventory; co-located pairs exchange packs by intra-facility handling at no shipping cost, while remote pairs use batched shuttle transport. Replenishment is modeled through lead times, batch sizes, and transportation costs, separately from path-level freight operations, and both swap replenishment and on-site truck charging draw on the hub's grid-limited charging capacity.

We represent uncertainty in the evolution of long-haul freight demand through a finite scenario set $\Omega$ with probabilities $p_\omega$; each scenario specifies a demand trajectory across freight paths and planning periods. Conditional on scenario $\omega$, the aggregated truck-arrival rate on path $p\in\mathcal P$ in period $t\in\mathcal T$ is approximated by
\begin{equation}
\Theta_{pt}(\omega)
\sim
\mathcal N\!\left(\alpha_{pt}(\omega),\,\beta_{pt}^{2}(\omega)\right),
\label{eq:theta-normal}
\end{equation}
where $\alpha_{pt}(\omega)$ and $\beta_{pt}(\omega)$ denote the scenario-dependent mean and standard deviation. The scenarios capture alternative long-term demand trajectories, whereas the conditional distribution represents day-to-day variability around each trajectory; unconditionally, $\Theta_{pt}$ follows the finite normal mixture $\sum_{\omega}p_\omega\mathcal N(\alpha_{pt}(\omega),\beta^2_{pt}(\omega))$, which accommodates heterogeneous and time-varying demand while retaining the tractability of scenario-based stochastic programming \citep{birge1997introduction,shapiro2021lectures}. Equation~\eqref{eq:theta-normal} is a moment-based approximation for safety sizing; the normal left tail is not interpreted as negative demand.

A path is \emph{active} in period $t$ under scenario $\omega$ if its electric-truck demand reaches a minimum threshold,
\begin{equation}
\mathcal P_t(\omega) \;:=\; \bigl\{p\in\mathcal P : \alpha_{pt}(\omega) \ge d_{\min}\bigr\},
\qquad
\mathcal P_t := \bigcup_{\omega\in\Omega}\mathcal P_t(\omega),
\label{eq:activation-scenario}
\end{equation}
so the timing of corridor activation is scenario-dependent. Serving an active corridor is an economic decision rather than a mandate: the electrification level $w_{pt}\in[0,1]$ of path $p$ is limited by station availability along the path, and demand not served electrically is met by diesel trucks at a substitution premium $c^{\mathrm{D}}$ per truck-day; rigid full coverage is recovered as the limiting case $c^{\mathrm{D}}\to\infty$.

Truck arrivals generate energy-replenishment requirements at the hubs visited by each path. Let $R_{ph}(\omega)$ denote the number of battery packs required per truck arriving on path $p$ at hub $h$ under scenario $\omega$, and let $P(h)$ denote the set of paths visiting hub $h$. In our experiments, $R_{ph}$ follows a just-in-time policy (at each hub a truck takes the whole packs needed for the next leg, capped by the onboard pack limit) and is identical across scenarios; the scenario argument is kept for generality. Assuming that path-level arrivals are conditionally independent given the scenario, the mean and variance of potential battery-pack demand at hub $h$ in period $t$ are
\begin{equation}
\mu_{ht}(\omega)
=
\sum_{p\in P(h)\cap\mathcal P_t(\omega)}
\alpha_{pt}(\omega)R_{ph}(\omega),
\qquad
\sigma_{ht}^{2}(\omega)
=
\sum_{p\in P(h)\cap\mathcal P_t(\omega)}
\beta_{pt}^{2}(\omega)R_{ph}^{2}(\omega),
\label{eq:demand-moments-h}
\end{equation}
and we write $\sigma_{ht}(\omega):=\sqrt{\sigma_{ht}^2(\omega)}$. When a BSS is active at hub $h$, this demand enters the circulating-battery system as swap requests; otherwise it is served by on-site charging and generates no demand for swappable batteries. Battery and fleet resources are sized on potential electric demand: $w_{pt}$ prices the diesel-served share in the master but does not scale the sizing coefficients, so a corridor with $w_{pt}=0$ can still contribute battery demand at an equipped hub and fleet cost at the hubs it visits. This fixed-demand approximation preserves the set-function structure used in Section~\ref{sec:submodular-recourse}, but it does not represent exactly the resources used by the electrically served flow; Section~\ref{sec:planning-insights} reports its magnitude in the computed plans.

Conditional path independence supports the hub-level moments~\eqref{eq:demand-moments-h}. Pooling at a charging hub requires an additional approximation: swap-demand streams at the assigned hubs are treated as independent over their respective replenishment windows. This does not follow from path independence, because a path that visits two hubs assigned to the same charging hub makes their demands dependent. The omitted covariances are nonnegative, so the approximation understates the pooled variance whenever two hubs of a pool share a path and is not conservative by construction. Online Supplement~S7 derives these covariances under the variance-rate approximation and bounds them for any temporal dependence along a path, and Appendix~\ref{app:pool-sim} quantifies their effect for the SE-5 plan.

\subsection{Capacity Planning}
\label{sec:capacity}

We now use the demand model to size three coupled capacities: battery stocks at charging hubs, the incremental truck fleet, and grid-limited charging capacity at the hubs that serve both loads. All three depend on the network configuration and on the charging lead times and truck dwell times it induces.

\subsubsection{Battery stock sizing under an \texorpdfstring{$(r,Q)$}{(r,Q)} replenishment policy}

\paragraph{Service-level requirement.}
We approximate a charging cycle-service target $1-\varepsilon^C$, with $0<\varepsilon^C<1/2$, by sizing the circulating battery pool through normal-quantile safety stocks. The target concerns a replenishment cycle at an assigned swapping hub, conditional on the arrival, batching and pooling approximations specified below. This is the \emph{per-cycle, per-location} non-stockout measure conventional in the location--inventory literature \citep{shen2003joint,atamturk2012conic,qi2023scaling}, not a joint guarantee over the network and horizon, which would require a Bonferroni-type tightening of $\varepsilon^C$ or a joint chance constraint. The reference service parameter is $\varepsilon^C=0.05$, corresponding to $\Phi^{-1}(1-\varepsilon^C)\simeq1.645$. The pool $n^b_{t,h'}(\omega)$ counts all packs circulating through charging hub $h'$, not only those stacked at $h'$, and must satisfy
\begin{equation}
n_{t,h'}^b(\omega) \;\ge\; \Psi_{t,h'}\bigl(z,Q(\omega);\mu(\omega),\sigma(\omega)\bigr),
\qquad \forall t \in \mathcal T,\ \forall h'\in\mathcal H,
\label{eq:Battery-service}
\end{equation}
where $z_{hh't}=1$ only when hub $h$ operates as a swapping hub in period $t$ and is replenished by $h'$, and $Q(\omega)$ collects the batch sizes. The function $\Psi_{t,h'}$ combines the expected number of packs in charging and transit with a safety stock against arrival and batching variability.

\paragraph{Repairable-inventory approximation.}
To construct $\Psi_{t,h'}$, we adapt the repairable-inventory approximation of \citet{qi2023scaling} to the multi-period, scenario-based network setting. Each swapping hub $h$ operates an $(r,Q)$ policy for charged batteries: whenever its inventory position drops to $r$, it ships $Q_{hh't}(\omega)$ depleted packs to its assigned charging hub $h'$ and orders the same number of charged packs. Only the batch size is a model decision; for a given pool size, the smallest $r$ attaining the target non-stockout probability is pinned by the service requirement, so $r$ remains implicit. Shuttle trips take the deterministic one-way time $T^t_{hh'}$, each pack requires the deterministic charging time $T^c$ before it can be returned, and partially charged packs are not swapped to customers. The \emph{effective charging time} for a pack moved from $h$ to $h'$ is
\begin{equation}
\Delta_{hh'} \;=\; T^{c} + T^{t}_{hh'},
\label{eq:Delta}
\end{equation}
the delay from a reorder at the swapping hub until the corresponding recharged pack is ready for dispatch at the charging hub.
The full pipeline window $T^c+2T^t_{hh'}$ covers charging and both transport legs. The pooled term uses $\Delta_{hh'}=T^c+T^t_{hh'}$, while the third line of~\eqref{eq:Battery2} protects the return leg locally. For a single independent stream, splitting a variance buffer is conservative relative to a single round-trip buffer because $\sqrt{(T^c+T^t)\sigma^2}+\sqrt{T^t}\sigma\ge\sqrt{(T^c+2T^t)\sigma^2}$. This inequality alone does not establish the cycle-service probability of the composed, multi-hub inventory system; the simulation in Appendix~\ref{app:pool-sim} shows that the split meets the target under day-to-day rate variability, but not at remote hubs when variability also accrues within the day, because both buffers must then suffice.

Heavy-duty packs weigh several hundred kilograms, so a shuttle carries few packs per trip, whereas the expected number of swaps $\mu_{ht}(\omega)\Delta_{hh'}$ over one effective charging interval is typically large. We exploit this scale separation by capping the batch size at a fraction $\gamma>0$ of that expected demand, together with the shuttle capacity $\overline Q$:
\begin{equation}
z_{hh't}\le Q_{hh't}(\omega)\le
\overline Q_{hh't}(\omega)z_{hh't},
\quad
\overline Q_{hh't}(\omega)
:=\min\!\left\{\overline Q,\max\!\left\{1,
\gamma\mu_{ht}(\omega)\Delta_{hh'}\right\}\right\},
\label{eq:small-batch-bound}
\end{equation}
for all $h'\in\mathcal H$, $h\in I_{h'}$, $t\in\mathcal T$, and $\omega\in\Omega$, so that $Q=0$ on an inactive link and $Q\ge1$ on an active one.

The batching variance follows from a counting argument adapted from \citet{qi2023scaling} (Online Supplement~S2). Let $X_{hh't}(\omega)$ be the number of full batches shipped from $h$ to $h'$ during the most recent effective charging interval, so that the \emph{batched deficit} $Q_{hh't}(\omega)X_{hh't}(\omega)$ counts the packs removed from the stock at $h$ and tied up in full batches in transit or charging. For integer batch sizes, it differs from the demand over that interval, whose mean is $\mu_{ht}(\omega)\Delta_{hh'}$, only through the incomplete-batch remainders at the two endpoints, each approximately uniform on $\{0,\ldots,Q_{hh't}(\omega)-1\}$. Treating the remainders as independent of each other and of the interval demand, scaling the one-day demand variance linearly with interval length (a variance-rate approximation), and allowing $Q$ to be continuous gives the smooth planning approximation
\begin{equation}
  \mathrm{Var}\bigl(Q_{hh't}(\omega)X_{hh't}(\omega)\bigr)
    \;\approx\; \Delta_{hh'}\,\sigma_{ht}^2(\omega) + \frac{Q_{hh't}^2(\omega)-1}{6},
  \label{eq:VarQX-scenario}
\end{equation}
whose first term reflects arrival variability over the interval and whose second reflects batching. If $z_{hh't}=1$ and $\gamma\mu_{ht}(\omega)\Delta_{hh'}<1$, constraint~\eqref{eq:small-batch-bound} fixes $Q_{hh't}(\omega)=1$; the remainder then vanishes, and only the variance-rate approximation remains.
The small-batch bound~\eqref{eq:small-batch-bound} is a modeling-consistency restriction: it confines the model to the region where~\eqref{eq:VarQX-scenario} is reliable. We calibrate $\gamma$ by simulation, following the protocol of \citet{qi2023scaling}, so that within the admissible range the approximation errs toward additional safety stock (Online Supplement~S2). The demand-variance input $\Delta_{hh'}\sigma^2_{ht}(\omega)$, which carries day-to-day rate variability, adds a further cushion wherever it exceeds the Poisson count variance $\sum_{p}\alpha_{pt}(\omega)R_{ph}^2(\omega)$ (Section~\ref{sec:scenarios}).

Combining these elements, the battery requirement is
\begin{equation} \label{eq:Battery2}
\begin{aligned}
\Psi_{t,h'}(z,Q(\omega);\mu(\omega),\sigma(\omega))
:= &\;
    \sum_{h \in I_{h'}}
        z_{hh't}\bigl((T^c + 2T^{t}_{hh'})\mu_{ht}(\omega)
                      - 1 + Q_{hh't}(\omega)\bigr) \\
    &\;+\;
    \Phi^{-1}(1-\varepsilon^C)
      \sqrt{\sum_{h \in I_{h'}}
          z_{hh't}\Bigl((T^c + T^{t}_{hh'})\sigma_{ht}^2(\omega)
                         + \frac{Q_{hh't}^2(\omega)-1}{6}\Bigr)} \\
    &\;+\;
    \Phi^{-1}(1-\varepsilon^C)
      \sum_{h \in I_{h'}} \sqrt{T^{t}_{hh'}}\,z_{hh't}\,\sigma_{ht}(\omega),
      \qquad \forall t \in \mathcal T,\ \forall h'\in\mathcal H.
\end{aligned}
\end{equation}
The first line of~\eqref{eq:Battery2} is the expected pipeline: packs in charging at $h'$ and in transit in either direction, plus the batching allowance $Q_{hh't}(\omega)-1$. The second line buffers arrival and batching variability~\eqref{eq:VarQX-scenario} and is \emph{pooled} across the assigned swap streams, because packs recharge in a common queue at $h'$ and are fungible across the swapping hubs it serves. The third line is an unpooled per-hub buffer over the one-way transport time, because a truck at hub $h$ cannot be served from stock at another swapping hub; accordingly, the model permits no lateral transshipment between swapping hubs, and the guarantee concerns replenishment cycles rather than the instantaneous queue a truck may encounter.

\paragraph{Sign of the battery-stock requirement.}
\label{sec:stock-sign-check}
For an active link, \eqref{eq:small-batch-bound} gives $Q_{hh't}(\omega)\ge1$, so the pipeline term $(T^c+2T^t_{hh'})\mu_{ht}(\omega)-1+Q_{hh't}(\omega)$ and the batching term $(Q^2_{hh't}(\omega)-1)/6$ are nonnegative. Since $\varepsilon^C<1/2$, $\Phi^{-1}(1-\varepsilon^C)>0$, so both buffer terms in~\eqref{eq:Battery2} are nonnegative as well. Hence $\Psi_{t,h'}\ge0$ on the whole feasible domain, the inventory constraint binds at optimality with $n^b_{t,h'}(\omega)=\Psi_{t,h'}$, and the reduced value function~\eqref{eq:f-reduced} needs no positive-part operator.

\subsubsection{Fleet sizing via dwell time and Little's law}
We model the fleet as a steady-state service system and apply Little's law. Each freight job occupies a truck for line-haul travel along its path and dwell at the hubs it visits. For path $p\in\mathcal P$, let $\mathcal H_p\subseteq\mathcal H$ be the hubs it visits and $T^t_p$ its driving time; at hub $h\in\mathcal H_p$, $T^r_{hp}$ is the dwell floor, namely the loading and unloading window at origin and destination hubs and a short operational stop at intermediate hubs. Energy replenishment takes $T^s$ per swap or $T^{c,\text{fast}}$ per pack fast-charged on board a waiting truck; $T^{c,\text{fast}}$ is distinct from the replenishment pace $T^c$ of swap batteries (Section~\ref{sec:grid-capacity}). With $x^s_{ht}$ indicating an open swapping station at hub $h$ in period $t$, the effective dwell time of a truck on path $p$ at hub $h$, and the incremental dwell that energy replenishment adds beyond the floor, are given for all $p\in\mathcal P$, $h\in\mathcal H_p$, and $t\in\mathcal T$ by
\begin{alignat}{2}
r_{pht}(\omega)
   &\;=\; \max\!\bigl\{T_{hp}^r,\,
      T^s x_{ht}^s + T^{c,\text{fast}} \bigl(1-x_{ht}^s\bigr) R_{ph}(\omega)\bigr\},
\label{eq:dwell-time}\\
\tilde r_{pht}(\omega)
   &\;=\; r_{pht}(\omega) - T^r_{hp}
   \;=\; \max\!\bigl\{0,\;
      T^s x_{ht}^s + T^{c,\text{fast}} \bigl(1-x_{ht}^s\bigr) R_{ph}(\omega)
      - T^r_{hp}\bigr\}.
\label{eq:dwell-excess}
\end{alignat}
The floor $T^r_{hp}$ and the line-haul time $T^t_p$ are incurred under every energy-service configuration and under diesel operation alike, so the fleet they occupy is a constant that we omit and the fleet cost is charged on the incremental dwell $\tilde r_{pht}(\omega)$. A charging session that fits within the loading window at an origin or destination hub thus ties up no additional truck-time, whereas an energy stop at an intermediate hub counts almost in full.

By Little's law, the average number of busy trucks on path $p$ for a fixed arrival rate $\Theta_{pt}(\omega)$ is
\[
\Theta_{pt}(\omega)\Bigl(\sum_{h\in\mathcal H_p}r_{pht}(\omega)+T^t_p\Bigr).
\]
Because this is a relation between means and the rate is itself uncertain within the period, we size the fleet by offered load with a safety factor, provisioning for the $(1-\varepsilon^C)$-quantile of the arrival rate, $\alpha_{pt}(\omega)+\Phi^{-1}(1-\varepsilon^C)\beta_{pt}(\omega)$; this is a capacity margin on the offered load, not a guarantee on the instantaneous queue. The quantile is shared with the battery target for parsimony only: here it scales provisioned truck capacity rather than setting a stockout probability, and the two parameters could be set independently. The relation presumes stationarity within a period and ignores queueing at the swapping or charging interface. With $O_c$ the fraction of the day a truck is operated, the incremental fleet provisioned for a scenario-active path $p\in\mathcal P_t(\omega)$ is
\begin{equation}
n^v_{t,p}(\omega)=\frac{\alpha_{pt}(\omega)+\Phi^{-1}(1-\varepsilon^C)\beta_{pt}(\omega)}{O_c}\sum_{h\in\mathcal H_p}\tilde r_{pht}(\omega):
\label{eq:fleet-closed}
\end{equation}
the safety-adjusted offered load (trucks per day) times the incremental dwell (days) gives busy trucks, which are converted into provisioned trucks and priced at the annual cost $M^v_t$ of a driver--truck unit. Online Supplement~S7 states the equivalent linear constraints. Because $x^s_{ht}$ is binary, this block is exactly affine in $x^s$ (Proposition~\ref{prop:fleet-affine}), which the decomposition of Section~\ref{sec:Methodology} exploits.

\subsubsection{Charging capacity and grid limits}
\label{sec:grid-capacity}

Both requirements draw on charging capacity, which is scarce where grid access is limited, so the charging power $C_{ht}\ge 0$ (kW) installed at each hub is a first-stage decision. The same high-power equipment serves two loads in different modes: a through truck occupies a charger at full power (one pack in $T^{c,\text{fast}}$), whereas returned swap batteries are inventory whose charging is staggered across the pool, so each recharges at the slower effective pace $T^c$ while no one waits for it.

The on-site load depends on how much of a corridor's demand is served electrically. With $w_{pt}$ and $\mathcal P_t:=\bigcup_\omega\mathcal P_t(\omega)$ as in Section~\ref{sec:network}, let $v_{pht}\in[0,1]$ be the electrified share of path $p$ in period $t$ that charges on site at hub $h\in\mathcal H_p$ because $h$ offers no swapping:
\begin{equation}
v_{pht} \;\ge\; w_{pt} - x^s_{ht},
\qquad v_{pht}\;\ge\;0,
\qquad
\forall p\in\mathcal P_t,\ \forall h\in\mathcal H_p,\ \forall t\in\mathcal T;
\label{eq:onsite-indicator}
\end{equation}
since decreasing $v$ cannot worsen the objective or feasibility, an optimal solution can always be chosen with $v$ at its lower bound. Each kW of capacity delivers $\rho\zeta$ kWh per day, where $\zeta:=24O_c$ is the daily operating window in hours and $\rho\in(0,1)$ is the maximum average utilization, and each pack carries $E^{\mathrm{pack}}$ kWh. Requiring the daily charging energy to fit within capacity in every scenario,
\begin{equation}
\begin{aligned}
&\underbrace{\sum_{h\in I_{h'}}
 z_{hh't}\mu_{ht}(\omega)E^{\mathrm{pack}}}_{\text{swap-replenishment energy}}
+\underbrace{\sum_{p\in P(h')\cap\mathcal P_t(\omega)}
 \alpha_{pt}(\omega)R_{ph'}(\omega)E^{\mathrm{pack}}v_{ph't}}_{\text{on-site charging energy}}
\\[-1mm]
&\hspace{35mm}\le \rho\zeta C_{h't},
\qquad
\forall h'\in\mathcal H,\ \forall t\in\mathcal T,\ \forall\omega\in\Omega,
\end{aligned}
\label{eq:charger-throughput}
\end{equation}
ensures that installed capacity covers the average daily charging energy assigned to each hub in every training scenario, with utilization margin $1-\rho$.  The on-site term scales with the electrically served share, whereas the swap term retains full potential demand at assigned BSSs. This asymmetry is consistent with the fixed-demand approximation of Section~\ref{sec:network}, but persists even for binary $w$ whenever a deployed BSS is visited by a corridor with $w=0$. Equality with a served-flow energy balance requires all contributing corridors to be fully electrified. The constraint supplies an average energy-feasibility condition. It does not, by itself, validate the deterministic charging times in~\eqref{eq:Delta} and~\eqref{eq:dwell-time}; intraday peaks and charger queueing remain outside the approximation.
The grid bounds the power drawn at a node:
\begin{equation}
C_{ht} \;\le\; \bar P_{ht}\, x^c_{ht},
\qquad
\forall h\in\mathcal H,\ \forall t\in\mathcal T,
\label{eq:grid-cap}
\end{equation}
where $x^c_{ht}\in\{0,1\}$ indicates an open charging station and $\bar P_{ht}$ is the power available for truck charging at node $h$ in period $t$, set by the node's transmission voltage level and growing with transmission-and-distribution expansion. Both loads compete for the same grid budget: fast-charging a through truck consumes grid capacity at the point of need, whereas a swap shifts the same energy to schedulable replenishment, possibly at a grid-rich hub via the assignment $z$, so the network concentrates charging \emph{where the grid allows}. Capacity is priced at $c^{\mathrm{kW}}$ per kW-year. Because~\eqref{eq:charger-throughput} is enforced in every scenario while the objective minimizes expected cost, first-stage capacity is \emph{scenario-feasible}, a finite-scenario design requirement related to sample-robust planning \citep{bertsimas2022twostage}; Section~\ref{sec:saa-stability-main} evaluates capacity exceedances empirically. Hard enforcement is also structural: the deterministic charging times in~\eqref{eq:Delta} and~\eqref{eq:dwell-time} presuppose average throughput within capacity, so a priced violation would invalidate the service-time model on which the recourse rests. For general scenario data the requirement is $\max_{\omega\in\Omega}A_{h't}^{\omega}(z,v)\le\rho\zeta C_{h't}$, where $A_{h't}^{\omega}$ denotes the left-hand side of~\eqref{eq:charger-throughput}, and the maximizing scenario can depend on the design. In the experimental generator, $R_{ph}$ is scenario-independent and all path rates in period $t$ share the multiplier $(1+g_\omega)^{y_t-2022}\varphi_t(\omega)$, so one scenario dominates every nonnegative load coefficient of a period, including the threshold-activated loads; this reduction need not hold under spatially heterogeneous adoption.

\section{Two-Stage Stochastic Formulation}
\label{sec:two-stage}

\begin{table}[!t]
\centering
\caption{Notation. A symbol in parentheses is the BSS counterpart of the preceding CCS symbol; $(\omega)$ marks scenario-dependent parameters.}
\label{Tab:description-two-stage}
\footnotesize
\renewcommand{\arraystretch}{1.00}
\setlength{\tabcolsep}{3pt}
\begin{tabularx}{\textwidth}{@{}l>{\raggedright\arraybackslash}X@{\hspace{1.2em}}l>{\raggedright\arraybackslash}X@{}}
\toprule
\multicolumn{4}{@{}l}{\textit{Sets}} \\[1pt]
$\mathcal H$ & Candidate hubs for charging and swapping & $\mathcal T$ & Planning periods \\
$\mathcal P$ & Freight paths & $\Omega$ & Demand scenarios \\
$\mathcal H_p$ & Hubs visited by path $p$ & $P(h)$ & Paths visiting hub $h$ \\
$I_{h'}$ & Swapping hubs that CCS $h'$ can replenish & $\mathcal P_t(\omega)$ & Active paths in $t$; $\mathcal P_t=\bigcup_\omega\mathcal P_t(\omega)$ \\
\midrule
\multicolumn{4}{@{}l}{\textit{Cost parameters}} \\[1pt]
$\delta_t$ & Discount factor of period $t$ &  &  \\
$A^c$ ($A^s$) & Annualized fixed cost of a CCS (BSS) & $M^c_t$ ($M^s_t$) & Maintenance cost of an open CCS (BSS) \\
$U^c_t$ ($U^s_t$) & Initialization cost of a new CCS (BSS) & $c^{\mathrm{kW}}$ & Annualized capacity cost per kW \\
$M^v_t$ & Annual cost per truck (vehicle and driver) & $M^b_t$ & Annual cost per battery pack \\
$\eta_t$ & Shuttle cost per driving-day & $c^{\mathrm{D}}$ & Diesel-substitution premium per truck-day \\
\midrule
\multicolumn{4}{@{}l}{\textit{Time and operating parameters}} \\[1pt]
$T^s$ & Swap time per stop & $T^c$ & Charging time per returned pack \\
$T^{c,\text{fast}}$ & On-site fast-charging time per pack & $T^r_{hp}$ & Dwell floor of path $p$ at hub $h$ \\
$T^t_{hh'}$ & Driving time between hubs $h$ and $h'$ & $T^t_p$ & Driving time along path $p$ \\
$Y$ & Working days per year & $O_c$ & Operated fraction of the day; $\zeta=24O_c$ \\
\midrule
\multicolumn{4}{@{}l}{\textit{Demand and service parameters}} \\[1pt]
$p_\omega$ & Probability of scenario $\omega$ & $d_{\min}$ & Activation threshold (trucks per day) \\
$\alpha_{pt}(\omega)$ & Mean daily truck arrivals on path $p$ & $\beta_{pt}(\omega)$ & Standard deviation of arrivals on path $p$ \\
$\mu_{ht}(\omega)$ & Mean daily pack demand at hub $h$ & $\sigma_{ht}(\omega)$ & Standard deviation of pack demand at $h$ \\
$R_{ph}(\omega)$ & Packs replenished per truck of $p$ at $h$ & $\varepsilon^C$ & Service level $1-\varepsilon^C$; $\Phi^{-1}$: normal quantile \\
$\Psi_{t,h'}(\cdot)$ & Battery-stock requirement at CCS $h'$, \eqref{eq:Battery2} &  &  \\
\midrule
\multicolumn{4}{@{}l}{\textit{Capacity and grid parameters}} \\[1pt]
$\overline Q$ & Maximum packs per shuttle load & $\rho$ & Maximum average charger utilization \\
$E^{\mathrm{pack}}$ & Energy per pack (kWh) & $\bar C_0$ & Capacity included in a base CCS \\
$\bar P_{ht}$ & Grid power for charging at $h$ in $t$ &  &  \\
\midrule
\multicolumn{4}{@{}l}{\textit{First-stage decision variables}} \\[1pt]
$x^c_{ht}$ ($x^s_{ht}$) & 1 if a CCS (BSS) operates at $h$ in $t$ & $u^c_{ht}$ ($u^s_{ht}$) & 1 if a CCS (BSS) opens at $h$ in $t$ \\
$z_{hh't}$ & 1 if BSS $h$ is assigned to CCS $h'$ in $t$ & $C_{ht}$ & Charging capacity (kW) at $h$ in $t$ \\
$w_{pt}$ & Electrified share of path $p$ in $t$ & $v_{pht}$ & On-site charging share of $p$ at $h$ \\
\midrule
\multicolumn{4}{@{}l}{\textit{Second-stage decision variables (all scenario-dependent)}} \\[1pt]
$n^b_{t,h'}$ & Circulating packs assigned to CCS $h'$ & $n^v_{t,p}$ & Incremental fleet of path $p$ \\
$Q_{hh't}$ & Replenishment batch size between BSS $h$ and CCS $h'$ & $g_{hh't}$ & Perspective variable ($=1/Q_{hh't}$ if active) \\
$r_{pht}$ & Effective dwell, determined by $x^s$ through~\eqref{eq:dwell-time} & $\tilde r_{pht}$ & Dwell in excess of $T^r_{hp}$, \eqref{eq:dwell-excess} \\
$o_{pht}$ & Incremental busy trucks of $p$ at $h$ &  &  \\
\bottomrule
\end{tabularx}
\end{table}

We formulate the planning problem as a two-stage stochastic program over the scenario set $\Omega$. The first stage commits station activation, BSS-to-CCS assignments, charging capacity, and corridor electrification before demand is observed; the second stage sets battery stocks, batch sizes, and fleet capacity for each realized scenario. Battery stocks and fleet capacity are contracted one year at a time. Because the recourse separates by period and the period-$t$ recourse depends only on the period-$t$ demand moments of the scenario, these decisions coincide with those taken at the start of each year once that year's demand is known; they do not use knowledge of later years. The first stage, in contrast, is committed for the whole horizon and does not adapt to observed adoption. Table~\ref{Tab:description-two-stage} summarizes the notation.

The first-stage variables are $x^c_{ht},x^s_{ht},u^c_{ht},u^s_{ht},z_{hh't}\in\{0,1\}$, $w_{pt},v_{pht}\in[0,1]$, and $C_{ht}\ge0$; $z$ replaces a stand-alone assignment variable, so inactive swapping hubs are not assigned. We assume every candidate swapping hub has at least one feasible replenishment hub, $\{h'\in\mathcal H: h\in I_{h'}\}\neq\emptyset$ for all $h\in\mathcal H$. For scenario $\omega$, let $G_{t,h'}^{B}(z;\omega)$ denote the optimal battery and replenishment cost at charging hub $h'$ in period $t$ and $G_{t,p}^{\mathrm V}(x^s;\omega)$ the fleet cost of path $p$; the second-stage cost of a first-stage solution is
\begin{equation}
\mathcal{G}(x^s,z;\omega)
=
\sum_{t\in\mathcal T}\sum_{h'\in\mathcal H}
G_{t,h'}^{B}(z;\omega)
+\sum_{t\in\mathcal T}\sum_{p\in\mathcal P_t(\omega)} G_{t,p}^{\mathrm V}(x^s;\omega),
\label{eq:recourse-decomp}
\end{equation}
which separates by period, by charging hub, and by path; the fleet block is evaluated for scenario-active corridors, including their diesel-served shares under the fixed-demand approximation. The recourse is feasible for every first-stage solution, so only optimality cuts are needed in Section~\ref{sec:Methodology}. The charging-capacity and grid constraints of Section~\ref{sec:grid-capacity} enter the first-stage problem below unchanged.

\paragraph{First-stage problem.}
\begin{equation}
\min_{x^c,x^s,u^c,u^s,z,w,v,C}\quad
\begin{aligned}
&\sum_{h\in\mathcal H}\sum_{t\in\mathcal T}\delta_t\Bigl((A^c+M_t^c)x^c_{ht}+U_t^c u^c_{ht}
+c^{\mathrm{kW}}(C_{ht}-\bar C_0x^c_{ht})\Bigr)\\
&+\sum_{h\in\mathcal H}\sum_{t\in\mathcal T}\delta_t\Bigl((A^s+M_t^s)x^s_{ht}+U_t^s u^s_{ht}\Bigr)\\
&+\sum_{\omega\in\Omega}p_\omega\sum_{t\in\mathcal T}\sum_{p\in\mathcal P_t(\omega)}
\delta_tc^{\mathrm{D}} Y\alpha_{pt}(\omega)(1-w_{pt})\\
&+\sum_{\omega\in\Omega}p_\omega\Biggl(
\sum_{t\in\mathcal T}\sum_{h'\in\mathcal H}G^B_{t,h'}(z;\omega)
+\sum_{t\in\mathcal T}\sum_{p\in\mathcal P_t(\omega)}G^{\mathrm V}_{t,p}(x^s;\omega)
\Biggr)
\end{aligned}
\label{eq:first-stage-obj-decomp}
\end{equation}
\begin{alignat}{2}
\text{s.t.}\quad
&u^c_{ht}\ge x^c_{ht}-x^c_{h,t-1},
&&\quad \forall (h,t)\in\mathcal H\times(\mathcal T\setminus\{0\}),
\label{eq:design-1-split}\\
&u^c_{h0}=x^c_{h0},
&&\quad \forall h\in\mathcal H,
\label{eq:design-2-split}\\
&u^s_{ht}\ge x^s_{ht}-x^s_{h,t-1},
&&\quad \forall (h,t)\in\mathcal H\times(\mathcal T\setminus\{0\}),
\label{eq:design-3-split}\\
&u^s_{h0}=x^s_{h0},
&&\quad \forall h\in\mathcal H,
\label{eq:design-4-split}\\
&w_{pt}\le x^c_{ht}+x^s_{ht},
&&\quad \forall t\in\mathcal T,\ \forall p\in\mathcal P_t,\ \forall h\in\mathcal H_p,
\label{eq:design-5-split}\\
&\sum_{h':\,h\in I_{h'}}z_{hh't}=x^s_{ht},
&&\quad \forall (h,t)\in\mathcal H\times\mathcal T,
\label{eq:assign-1-split}\\
&z_{hh't}\le x^c_{h't},
&&\quad \forall h'\in\mathcal H,\ \forall h\in I_{h'},\ \forall t\in\mathcal T,
\label{eq:assign-2-split}\\
&z_{hh't}=0,
&&\quad \forall h'\in\mathcal H,\ \forall h\notin I_{h'},\ \forall t\in\mathcal T,
\label{eq:assign-feasible-split}\\
&\eqref{eq:onsite-indicator},\quad
\eqref{eq:charger-throughput},\quad
\eqref{eq:grid-cap},
\notag\\[-1mm]
&C_{ht}\ge \bar C_0x^c_{ht},
&&\quad \forall (h,t)\in\mathcal H\times\mathcal T,
\label{eq:base-capacity}\\
&x^c_{ht},x^s_{ht},u^c_{ht},u^s_{ht}\in\{0,1\},
&&\quad \forall h\in\mathcal H,\ \forall t\in\mathcal T,
\notag\\
&z_{hh't}\in\{0,1\},
&&\quad \forall h,h'\in\mathcal H,\ \forall t\in\mathcal T,
\notag\\
&w_{pt}\in[0,1],
&&\quad \forall t\in\mathcal T,\ \forall p\in\mathcal P_t,
\notag\\
&v_{pht}\in[0,1],
&&\quad \forall t\in\mathcal T,\ \forall p\in\mathcal P_t,\ \forall h\in\mathcal H_p,
\notag\\
&C_{ht}\ge0,
&&\quad \forall h\in\mathcal H,\ \forall t\in\mathcal T.
\label{eq:binary-split}
\end{alignat}
The third objective term prices unmet electrification: the non-electrified share $(1-w_{pt})$ of an active corridor's demand incurs the premium $c^{\mathrm{D}}$ over $Y$ operating days, discounted by $\delta_t$. Constraints~\eqref{eq:design-1-split}--\eqref{eq:design-4-split} track station openings and trigger initialization costs only when a facility is commissioned. Because infrastructure costs are annualized, as for leased or contracted equipment, the formulation does not forbid closing a station or reducing capacity between periods; irreversible investment would add the monotonicity constraints $x_{ht}\ge x_{h,t-1}$ and $C_{ht}\ge C_{h,t-1}$, which involve only first-stage variables and leave the decomposition of Section~\ref{sec:Methodology} unchanged. Section~\ref{sec:planning-insights} reports their effect on the SE-5 plan.
Constraint~\eqref{eq:design-5-split} is the corridor bottleneck: the electrification level of path $p$ cannot exceed station availability at any hub it visits, including its destination, where an arriving truck must be able to recharge before its next dispatch, which the path-based model does not represent explicitly. For a fixed station configuration, $w_{pt}$ would attain its integral station-availability bound unless raising it incurs capacity cost or affects feasibility; with endogenous charging capacity, fractional values can arise at throughput limits or economic breakpoints, so no integrality property is claimed for $w$, and a binding throughput row need not imply that the physical grid cap is saturated. Constraints~\eqref{eq:assign-1-split}--\eqref{eq:assign-feasible-split} assign each active swapping hub to exactly one open, feasible charging hub, and~\eqref{eq:base-capacity} credits the capacity $\bar C_0$ included in the reference site, so that a base site costs exactly $A^c$. The capacity constraints~\eqref{eq:onsite-indicator}, \eqref{eq:charger-throughput}, \eqref{eq:grid-cap} and~\eqref{eq:base-capacity} couple only first-stage variables with fixed scenario data, so they restrict the master without affecting the recourse structure or the submodularity results of Section~\ref{sec:Methodology}.

\paragraph{Battery subproblem for fixed $(t,h',\omega)$.}
The local battery recourse is
\begin{subequations}
\label{eq:Gthprime}
\begin{alignat}{2}
G_{t,h'}^{B}(z;\omega)
:=\;&
\min_{\substack{n_{t,h'}^b(\omega),\,Q_{\cdot h't}(\omega),\\
                  g_{\cdot h't}(\omega)}}
\ \delta_t M_t^b n_{t,h'}^b(\omega)
&\notag\\[-1mm]
&+2\delta_t\eta_t Y
\sum_{h\in I_{h'}\setminus\{h'\}}
T^{t}_{hh'}\mu_{ht}(\omega)g_{hh't}(\omega)
&\label{eq:Gthprime-obj}\\[1mm]
\text{s.t.}\quad
&\eqref{eq:Battery-service},\quad\eqref{eq:small-batch-bound}\ \text{for all } h\in I_{h'},
&\notag\\
&g_{hh't}(\omega)Q_{hh't}(\omega)\ge z_{hh't}^2,
&\quad \forall h\in I_{h'}
\label{eq:Gthprime-persp}\\
&n_{t,h'}^b(\omega)\ge0,\quad g_{hh't}(\omega)\ge0,
&\quad \forall h\in I_{h'}.
\label{eq:Gthprime-nonneg}
\end{alignat}
\end{subequations}
For a remote active link with positive shipping cost, optimality gives $g_{hh't}(\omega)=1/Q_{hh't}(\omega)$; on a co-located link this value may be selected without loss of optimality, and for an inactive link both $Q_{hh't}(\omega)$ and its shipping cost vanish, which avoids the undefined ratio $z_{hh't}/Q_{hh't}(\omega)$. Because $\overline Q_{hh't}(\omega)$ in~\eqref{eq:small-batch-bound} is data, the active-link batch domains are fixed intervals. Co-located pairs ($h=h'$) incur no shipping cost, so their batch size falls to one and the batching terms vanish; the model thus distinguishes piece-flow replenishment at co-located hubs from batched shuttle transport between remote pairs, at $\mu_{ht}(\omega)Y/Q_{hh't}(\omega)$ round trips per year.

\paragraph{Truck fleet subproblem for fixed $(t,p,\omega)$.}
The path-specific fleet recourse is $G^{\mathrm V}_{t,p}(x^s;\omega)=\delta_tM^v_t\,n^v_{t,p}(\omega)$ with $n^v_{t,p}(\omega)$ given by~\eqref{eq:fleet-closed}, i.e., the optimal value of the linear program over the fleet-sizing constraints of Online Supplement~S7.

\section{Lov\'asz-Cut Benders Decomposition} \label{sec:Methodology}
\subsection{Submodular Recourse and Lov\'asz Cuts}
\label{sec:submodular-recourse}

By~\eqref{eq:recourse-decomp}, the recourse $\mathcal G(x^s,z;\omega)$ of the two-stage program in Section~\ref{sec:two-stage} separates into hub-wise battery subproblems~\eqref{eq:Gthprime} and path-wise fleet subproblems $G^{\mathrm V}_{t,p}$. The battery subproblem depends on the first stage only through the active-assignment vector $z$: when $z_{hh't}=0$, hub $h$ sends no swapping demand to charging hub $h'$ in period $t$, whatever value $x^s$ takes. For each charging hub $h'\in\mathcal H$, period $t\in\mathcal T$, and scenario $\omega\in\Omega$, let
\[
S_{h't}(z) := \{\,h\in I_{h'} : z_{hh't}=1\,\}
\]
be the set of active swapping hubs assigned to $h'$, and let $f_{h't}^\omega(S)$ denote the optimal value of~\eqref{eq:Gthprime} with $z_{hh't}=1$ if and only if $h\in S$. Because the hub-level coefficients $(\mu_{ht}(\omega),\sigma_{ht}(\omega))$ are fixed scenario data, $f_{h't}^\omega$ is a set function on $2^{I_{h'}}$ that does not change between master iterations. The fleet terms $G_{t,p}^{\mathrm V}(x^s;\omega)$ depend on $x^s$ rather than $z$ and are represented by exact cuts in Section~\ref{sec:lbb-algorithm}.

\subsubsection{Submodularity of the active-assignment value function}

We first record the reduced form of the value function. Fix $(h',t,\omega)$ and suppress these indices where unambiguous. Because the inventory requirement is nonnegative on the feasible batch domain (Section~\ref{sec:stock-sign-check}), inventory minimization gives $n^b=\Psi$, and $g_{hh't}(\omega)=1/Q_{hh't}(\omega)$ can be selected for every active hub without loss of optimality (Section~\ref{sec:two-stage}). Substituting \eqref{eq:Battery2} into the objective \eqref{eq:Gthprime-obj} and collecting terms by hub yields, for every $S\subseteq I_{h'}$,
\begin{equation}
f_{h't}^\omega(S)
\;=\;
\sum_{h\in S} a_h
\;+\;
\min_{Q\in\mathcal Q(S)}
\;\Biggl\{
\sum_{h\in S}c_h(Q_h)
\;+\;
B\,\sqrt{\sum_{h\in S} s_h(Q_h)}
\Biggr\},
\label{eq:f-reduced}
\end{equation}
where $\mathcal Q(S):=\prod_{h\in S}\,[1,\overline Q_{hh't}(\omega)]$ is the box of feasible batch sizes and
\[
a_h := \delta_t M^b_t\Bigl[(T^c+2T^{t}_{hh'})\mu_{ht}(\omega)-1
+\Phi^{-1}(1-\varepsilon^C)\sqrt{T^{t}_{hh'}}\,\sigma_{ht}(\omega)\Bigr],
\qquad
B := \delta_t M^b_t\,\Phi^{-1}(1-\varepsilon^C)\;\ge 0,
\]
\[
c_h(Q) := \delta_t M^b_t\,Q + \frac{2\delta_t\eta_t Y T^{t}_{hh'}\,\mu_{ht}(\omega)\,\mathbf 1_{\{h\ne h'\}}}{Q},
\qquad
s_h(Q) := (T^c+T^{t}_{hh'})\,\sigma^2_{ht}(\omega) + \frac{Q^2-1}{6}\;\ge 0.
\]
Since the demand moments entering $h'$ add over the active set, \eqref{eq:f-reduced} writes $f_{h't}^\omega$ as a modular term plus the partial minimization, over hub-separable batch sizes, of a modular cost and a concave square root of a nonnegative modular variance sum. For fixed batch sizes, the pooled term is a classical submodular concave function of a nonnegative modular sum \citep{atamturk2008polymatroids,atamturk2020submodularity}. Optimizing those batch sizes requires a value-function argument, as in \citet{qi2023scaling}, rather than relying solely on the fixed-coefficient square-root expression. Partial minimization does not preserve submodularity in general. It does here because the only cross-hub coupling is the pooled square root, which a \emph{single scalar multiplier} linearizes.

For $b\ge 0$ and $V\ge 0$ we use the conjugate (arithmetic--geometric mean) identity
\begin{equation}
b\sqrt{V}\;=\;\inf_{\tau>0}\;\Bigl\{\tau V+\frac{b^{2}}{4\tau}\Bigr\},
\label{eq:sqrt-conjugate}
\end{equation}
with attainment at $\tau=b/(2\sqrt V)$ when $b>0$ and $V>0$; zero cases are interpreted by limits. Applying \eqref{eq:sqrt-conjugate} to \eqref{eq:f-reduced} and exchanging the two minimizations gives
\begin{equation}
f_{h't}^\omega(S)-\sum_{h\in S}a_h
\;=\;
\inf_{\tau>0}\;\Bigl\{\sum_{h\in S}\chi_h(\tau)+\frac{B^{2}}{4\tau}\Bigr\},
\qquad
\chi_h(\tau):=\min_{Q\in[1,\overline Q_{hh't}(\omega)]}\bigl\{c_h(Q)+\tau\,s_h(Q)\bigr\}.
\label{eq:f-tau-form}
\end{equation}
For fixed $\tau$ the bracket in~\eqref{eq:f-tau-form} is \emph{modular} in $S$, so hubs interact only through the common multiplier $\tau$. Each $\chi_h$ is a pointwise minimum of the affine functions $\tau\mapsto c_h(Q)+\tau\,s_h(Q)$, whose slopes $s_h(Q)$ are nonnegative, and is therefore nondecreasing and concave on $(0,\infty)$.

\begin{lemma}[Scalar partial minimization preserves submodularity]
\label{lem:tau-min}
Let $I$ be a finite set, $\psi:(0,\infty)\to\mathbb R\cup\{+\infty\}$ arbitrary, and for each $h\in I$ let $\chi_h:(0,\infty)\to\mathbb R$ be \emph{nondecreasing}. Then
\[
F(S)\;:=\;\inf_{\tau>0}\;\Bigl\{\sum_{h\in S}\chi_h(\tau)+\psi(\tau)\Bigr\}
\]
is submodular on $2^{I}$ whenever the infima are finite.
\end{lemma}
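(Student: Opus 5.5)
The plan is to verify the lattice form of submodularity directly, $F(A)+F(B)\ge F(A\cup B)+F(A\cap B)$ for all $A,B\subseteq I$, by a pairing (exchange) argument on the multiplier $\tau$. This mirrors the minimization-over-a-lattice viewpoint of \citet{topkis1978minimizing} and uses nothing beyond the monotonicity of each $\chi_h$. Concavity of $\chi_h$ and any structure of $\psi$ play no role, which is why $\psi$ may be arbitrary.

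Fix $A,B\subseteq I$ with $F(A),F(B)$ finite, and write $\Sigma_S(\tau):=\sum_{h\in S}\chi_h(\tau)$. Since the infima need not be attained, I first fix $\epsilon>0$ and choose $\tau_A,\tau_B>0$ such that
$\Sigma_A(\tau_A)+\psi(\tau_A)\le F(A)+\epsilon$ and $\Sigma_B(\tau_B)+\psi(\tau_B)\le F(B)+\epsilon$.
Both values are finite, so $\psi(\tau_A),\psi(\tau_B)<\infty$. The key step is to use the \emph{smaller} multiplier for the union and the \emph{larger} one for the intersection. Assume without loss of generality that $\tau_A\le\tau_B$; the other case follows by exchanging the roles of $A$ and $B$. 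Then
\[
F(A\cup B)+F(A\cap B)\le \Sigma_{A\cup B}(\tau_A)+\psi(\tau_A)+\Sigma_{A\cap B}(\tau_B)+\psi(\tau_B).
\]
Split $\Sigma_{A\cup B}(\tau_A)=\Sigma_A(\tau_A)+\Sigma_{B\setminus A}(\tau_A)$ and $\Sigma_B(\tau_B)=\Sigma_{A\cap B}(\tau_B)+\Sigma_{B\setminus A}(\tau_B)$. The right-hand side then equals
\[
\Sigma_A(\tau_A)+\psi(\tau_A)+\Sigma_B(\tau_B)+\psi(\tau_B)-\bigl[\Sigma_{B\setminus A}(\tau_B)-\Sigma_{B\setminus A}(\tau_A)\bigr].
\]
The bracket is nonnegative because each $\chi_h$ is nondecreasing and $\tau_A\le\tau_B$. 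Hence $F(A\cup B)+F(A\cap B)\le F(A)+F(B)+2\epsilon$, and letting $\epsilon\downarrow0$ gives the claim.

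The only delicate point is bookkeeping rather than estimation. The $\psi$ terms must cancel exactly, which is why each of the two original multipliers is reused once; the monotonicity must then be applied to the right difference set, namely $B\setminus A$ when $\tau_A\le\tau_B$ and $A\setminus B$ otherwise. I would also remark on the infinite cases. If $F(A)$ or $F(B)$ is $+\infty$, the inequality is trivial. If some value is $-\infty$, it is excluded by the hypothesis that the infima are finite. With the lemma established, applying it to \eqref{eq:f-tau-form} with $\psi(\tau)=B^2/(4\tau)$ and adding the modular term $\sum_{h\in S}a_h$ yields submodularity of $f^\omega_{h't}$.
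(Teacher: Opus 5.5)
Your proof is correct and uses the same $\varepsilon$-near-optimal multiplier exchange as the paper. The paper proves the diminishing-returns form, which is your lattice inequality with $A=S\cup\{h\}$ and $B=T$; its Case 1 ($\tau_a\ge\tau_b$, monotonicity of $\chi_h$) and Case 2 ($\tau_a<\tau_b$, monotonicity of $\chi_{h'}$ on $T\setminus S$) are exactly your two symmetric cases of evaluating the union at the smaller multiplier and the intersection at the larger one, which your lattice formulation merges into one case by symmetry.
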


The proof is given in Appendix~\ref{app:proof-lemma}. The lemma is an instance of the preservation of submodularity under minimization over a lattice \citep{topkis1978minimizing}: with the order on $\tau$ reversed, the bracket is jointly submodular in $(S,\tau)$, because its increment from adding $h$ is $\chi_h(\tau)$, which is monotone in $\tau$. The substantive step is the reduction~\eqref{eq:f-reduced}--\eqref{eq:f-tau-form}, which brings a value function defined by continuous batch-size optimization into this scalar form. Unlike the convex oracle of Section~\ref{sec:analytic-subproblem}, the argument does not require convexity of the batch-size objective. The argument uses only the monotonicity of $\chi_h$ in $\tau$, so it extends to any nondecreasing concave pooling function of a nonnegative separable sum through its conjugate representation.

\begin{proposition}[Submodularity]
\label{prop:submodular}
Fix a charging hub $h'\in\mathcal H$, period $t\in\mathcal T$, and scenario $\omega\in\Omega$. The set function $f_{h't}^\omega:2^{I_{h'}}\to\mathbb R$ defined by \eqref{eq:Gthprime} is submodular, i.e.,
\[
f_{h't}^\omega(S\cup\{h\}) - f_{h't}^\omega(S)
\;\ge\;
f_{h't}^\omega(T\cup\{h\}) - f_{h't}^\omega(T),
\qquad
\forall S\subseteq T\subseteq I_{h'},\ \forall h\in I_{h'}\setminus T.
\]
\end{proposition}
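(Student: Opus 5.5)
The plan is to combine the reduced form~\eqref{eq:f-reduced}, the conjugate representation~\eqref{eq:f-tau-form}, and Lemma~\ref{lem:tau-min}. The modular term $\sum_{h\in S}a_h$ is trivially submodular (its marginal $a_h$ does not depend on $S$), and sums of submodular functions are submodular. It therefore suffices to show that $F(S):=f^\omega_{h't}(S)-\sum_{h\in S}a_h$ is submodular. I will do this by checking that~\eqref{eq:f-tau-form} is exactly of the form of Lemma~\ref{lem:tau-min}, with $\psi(\tau)=B^2/(4\tau)$ and $\chi_h$ as defined there.

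First I need to justify~\eqref{eq:f-reduced}. Feasibility is automatic: $\mathcal Q(S)$ is a nonempty compact box because $\overline Q_{hh't}(\omega)\ge 1$. Next, $n^b=\Psi$ holds at the optimum because $\Psi\ge0$ (Section~\ref{sec:stock-sign-check}) and $M^b_t\delta_t>0$. Choosing $g=1/Q$ is optimal by the perspective constraint~\eqref{eq:Gthprime-persp}. Terms of inactive hubs drop out since $z_{hh't}=0$. Collecting what remains gives $a_h$, $c_h$, $s_h$, and $B$. The minimum over $Q$ is attained because the objective is continuous on a compact set.

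Second, I need to justify the exchange of minimizations in~\eqref{eq:f-tau-form}. For fixed $Q\in\mathcal Q(S)$, apply~\eqref{eq:sqrt-conjugate} with $b=B$ and $V=\sum_{h\in S}s_h(Q_h)$. Then $\min_Q\inf_\tau=\inf_\tau\min_Q$ holds, since both are infima of the same function over a product set. For fixed $\tau$, the inner minimization separates across hubs, because the box is a product and the bracket is a sum of hub-wise terms; this yields $\sum_{h\in S}\chi_h(\tau)$.

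Two cases need separate handling.
\begin{itemize}
\item \emph{$S=\emptyset$ or $B=0$.} If $S=\emptyset$, then $V=0$ and $\inf_\tau B^2/(4\tau)=0=B\sqrt 0$, consistent with $f(\emptyset)=0$. If $B=0$, the identity still holds as an infimum, approached as $\tau\to0$.
\item \emph{$V=0$ with $S\neq\emptyset$.} This happens when all $\sigma_{ht}=0$ and all $Q_h=1$. The identity gives $\inf_\tau B^2/(4\tau)=0$, approached as $\tau\to\infty$. This agrees with the zero-case convention by limits stated after~\eqref{eq:sqrt-conjugate}.
\end{itemize}
In every case the infima are finite, because the bracket is bounded below by $\sum_h\min_Q c_h(Q)$, which is finite.

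Third, I verify the hypothesis of the lemma: each $\chi_h$ is nondecreasing in $\tau$. This holds because $s_h(Q)\ge0$ on $[1,\overline Q]$, so each affine piece $c_h(Q)+\tau s_h(Q)$ is nondecreasing in $\tau$, and a pointwise minimum of nondecreasing functions is nondecreasing. The minimum is attained by compactness, so $\chi_h$ is real-valued. Lemma~\ref{lem:tau-min} then gives submodularity of $F$. Adding back the modular term yields the stated diminishing-differences inequality for $f^\omega_{h't}$.

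The main obstacle is the second step. The bracket must be genuinely modular in $S$ for each fixed $\tau$, and the boundary cases ($V=0$, $B=0$) must not break the identity or the finiteness required by the lemma. If the limit conventions cause trouble, my fallback is to prove the inequality directly in the form that the proof of Lemma~\ref{lem:tau-min} uses. That means taking near-optimal multipliers $\tau_1$ for $S\cup\{h\}$ and $\tau_2$ for $T$, and cross-evaluating at $\min(\tau_1,\tau_2)$ and $\max(\tau_1,\tau_2)$, using only the monotonicity of $\chi_h$.
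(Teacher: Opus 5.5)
Your proposal is correct and follows the paper's proof essentially step for step. You write $f^\omega_{h't}(S)=\sum_{h\in S}a_h+F(S)$ via \eqref{eq:f-reduced}--\eqref{eq:f-tau-form}, justify $\min_Q\inf_\tau=\inf_\tau\min_Q$ over the product domain together with the $V=0$ case of \eqref{eq:sqrt-conjugate} read as an infimum, get monotonicity of each $\chi_h$ from $s_h(Q)\ge0$ on $Q\ge1$, certify finiteness through the lower bound $\sum_{h\in S}\min_Q c_h(Q)$, and apply Lemma~\ref{lem:tau-min} with $\psi(\tau)=B^2/(4\tau)$ before adding back the modular term; your extra derivation of the reduced form itself (binding $n^b=\Psi$, $g=1/Q$, attainment on the compact box) matches what the paper places in the text preceding the lemma.
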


\begin{proof}
By the reduction \eqref{eq:f-reduced}--\eqref{eq:f-tau-form},
$f_{h't}^\omega(S)=\sum_{h\in S}a_h+F(S)$ with
$F(S)=\inf_{\tau>0}\{\sum_{h\in S}\chi_h(\tau)+B^2/(4\tau)\}$.
The exchange of minimization order behind \eqref{eq:f-tau-form} is justified because
\(
\min_{Q}\inf_{\tau}\{\cdot\}=\inf_{\tau}\min_{Q}\{\cdot\}
\)
holds for joint minimization over independent domains, and identity \eqref{eq:sqrt-conjugate} holds for every $V\ge0$ (for $V=0$ as an infimum). Each $\chi_h$ in \eqref{eq:f-tau-form} is nondecreasing on $(0,\infty)$ because $s_h(Q)\ge0$ on the feasible box $Q\in[1,\overline Q_{hh't}(\omega)]$ (recall constraint \eqref{eq:small-batch-bound} forces $Q_{hh't}(\omega)\ge z_{hh't}$, so $Q\ge1$ for active hubs, hence $(Q^2-1)/6\ge0$). The infimum is finite: the bracket is bounded below by $\sum_{h\in S}\min_Q c_h(Q)>-\infty$, while choosing any feasible finite batch vector gives a finite value in~\eqref{eq:f-reduced}. This argument also covers zero-variance sets without requiring attainment or coercivity in $\tau$. Lemma~\ref{lem:tau-min} (with $\psi(\tau)=B^2/(4\tau)$) therefore yields submodularity of $F$, and adding the modular term $\sum_{h\in S}a_h$ preserves submodularity.
\end{proof}

The function is also normalized ($f_{h't}^\omega(\emptyset)=0$) and nondecreasing, so every exact chain marginal is nonnegative; the verification is given in Appendix~\ref{app:proof-monotone}.

\subsubsection{Lov\'asz extension and linear under-estimators}
Let $f_{h't}^\omega$ be the active-assignment value function in Proposition~\ref{prop:submodular}. For any permutation $\pi=(\pi_1,\ldots,\pi_{|I_{h'}|})$ of the ground set $I_{h'}$, define the nested sequence
\[
S_0^\pi=\emptyset,
\qquad
S_j^\pi=\{\pi_1,\ldots,\pi_j\},\quad j=1,\ldots,|I_{h'}|.
\]
For any hub $h\in I_{h'}$, let $[h]$ denote its position in the permutation, i.e., $\pi_{[h]}=h$. The marginal increment of $h$ along this chain is
\[
\lambda_{hh't}^\omega(\pi):=f_{h't}^\omega\bigl(S_{[h]}^\pi\bigr)-f_{h't}^\omega\bigl(S_{[h]-1}^\pi\bigr).
\]
The corresponding Lov\'asz linear under-estimator for charging hub $h'$ and period $t$ is
\begin{equation}
L_{h't,\pi}^\omega(z_{h't})
:=
\sum_{h\in I_{h'}}\lambda_{hh't}^\omega(\pi)\,z_{hh't},
\qquad z_{h't}\in[0,1]^{I_{h'}},
\label{eq:Lovasz-h}
\end{equation}
where $z_{h't}=(z_{hh't})_{h\in I_{h'}}$ collects the active assignment variables for charging hub $h'$ in period $t$. Because $f^\omega_{h't}$ is submodular, \eqref{eq:Lovasz-h} is the extended-polymatroid inequality generated by the greedy order $\pi$, and the family over all permutations describes the convex closure of $f^\omega_{h't}$ on $[0,1]^{I_{h'}}$ \citep{lovasz1983submodular,atamturk2008polymatroids}; separation over the family is therefore a sort.

\begin{proposition}[Lov\'asz-type under-estimator]
\label{prop:lovasz}
Let $f_{h't}^\omega$ be the normalized submodular function in Proposition~\ref{prop:submodular}. Then, for any set $S\subseteq I_{h'}$, its incidence vector $z_{h't}=\mathbf 1_S\in\{0,1\}^{I_{h'}}$, and any permutation $\pi$ of $I_{h'}$,
\[
f_{h't}^\omega(S)
\;\ge\;
L_{h't,\pi}^\omega(\mathbf 1_S).
\]
Moreover, if the elements of $S$ appear consecutively at the beginning of $\pi$, then equality holds.
\end{proposition}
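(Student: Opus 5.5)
We plan a direct telescoping argument along the chain generated by $\pi$. Fix $S\subseteq I_{h'}$ and write $S=\{h_1,\dots,h_k\}$, with the elements listed in the order in which they appear in $\pi$, so that $[h_1]<[h_2]<\cdots<[h_k]$. Put $T_0=\emptyset$ and $T_i=\{h_1,\dots,h_i\}$. Because $f_{h't}^\omega$ is normalized, $f_{h't}^\omega(\emptyset)=0$ (Appendix~\ref{app:proof-monotone}), and so
\[
f_{h't}^\omega(S)=\sum_{i=1}^{k}\Bigl(f_{h't}^\omega(T_{i-1}\cup\{h_i\})-f_{h't}^\omega(T_{i-1})\Bigr).
\]
On the other side, $L_{h't,\pi}^\omega(\mathbf 1_S)=\sum_{i=1}^k\lambda_{h_ih't}^\omega(\pi)$, and each term is the increment $f(S^\pi_{[h_i]-1}\cup\{h_i\})-f(S^\pi_{[h_i]-1})$. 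The proof therefore reduces to comparing the two sums term by term.

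The key observation is that $T_{i-1}\subseteq S^\pi_{[h_i]-1}$. Every $h_j$ with $j<i$ precedes $h_i$ in $\pi$, so it lies among the first $[h_i]-1$ elements. We also have $h_i\notin S^\pi_{[h_i]-1}$. We can then apply Proposition~\ref{prop:submodular} with $S\leftarrow T_{i-1}$, $T\leftarrow S^\pi_{[h_i]-1}$ and $h\leftarrow h_i$. This gives
\[
f(T_{i-1}\cup\{h_i\})-f(T_{i-1})\ \ge\ \lambda_{h_ih't}^\omega(\pi)\quad\text{for each } i.
\]
Summing over $i$ yields $f_{h't}^\omega(S)\ge L_{h't,\pi}^\omega(\mathbf 1_S)$. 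Only the diminishing-returns form of submodularity and normalization are needed; monotonicity is not.

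For the equality case, suppose the elements of $S$ occupy the first $k$ positions of $\pi$. Then $[h_i]=i$, and so $S^\pi_{[h_i]-1}=T_{i-1}$ exactly. Each inequality above therefore holds with equality, and the sum telescopes to $f(S^\pi_k)-f(\emptyset)=f(S)$.

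I do not expect a real obstacle here. The only step needing care is the bookkeeping that checks $T_{i-1}\subseteq S^\pi_{[h_i]-1}$ while $h_i$ lies outside it, which is what the hypotheses of Proposition~\ref{prop:submodular} require. The other point is the explicit use of $f(\emptyset)=0$, which makes both the telescoping sum and the Lov\'asz sum start from zero, with no constant offset.
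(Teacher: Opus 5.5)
Your proof is correct and matches the paper's argument. The paper also uses submodularity to bound each chain marginal of $h\in S$ by its marginal relative to the elements of $S$ that precede it in $\pi$, telescopes these bounds to $f^\omega_{h't}(S)$ using $f^\omega_{h't}(\emptyset)=0$, and obtains equality when $S$ is a prefix of $\pi$. Your check that $T_{i-1}\subseteq S^\pi_{[h_i]-1}$ and $h_i\notin S^\pi_{[h_i]-1}$ simply writes out the bookkeeping the paper leaves implicit.
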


This is the greedy property of extended polymatroids \citep{lovasz1983submodular,atamturk2008polymatroids}: by submodularity, the chain marginal of each $h\in S$ is at most its marginal with respect to the elements of $S$ that precede it in $\pi$, and these bounds telescope to $f^\omega_{h't}(S)$, with equality when $S$ is a prefix of $\pi$. By~\eqref{eq:recourse-decomp}, the battery recourse of scenario $\omega$ is the sum of the block values $f_{h't}^\omega\bigl(S_{h't}(z)\bigr)$ over the blocks $(h',t)$ with $I_{h'}\neq\emptyset$. The master therefore carries, besides the scenario epigraph $\theta^B_\omega$, one epigraph variable $\theta^B_{\omega h't}\ge0$ per scenario and block, and each permutation $\pi$ of $I_{h'}$ yields the \emph{block cut}
\begin{equation}
\theta^B_{\omega h't}\;\ge\;L^\omega_{h't,\pi}(z_{h't}).
\label{eq:battery-cut}
\end{equation}
Summing block cuts over all blocks, with permutation $\pi_{h't}$ for block $(h',t)$, gives the hub--period-\emph{aggregated} cut
\begin{equation}
\theta^B_\omega\;\ge\;L_k^\omega(z)
:=
\sum_{t\in\mathcal T}\sum_{h'\in\mathcal H}\sum_{h\in I_{h'}}
\lambda_{hh't}^{k,\omega}\,z_{hh't},
\qquad
\lambda_{hh't}^{k,\omega}:=\lambda_{hh't}^\omega(\pi_{h't}),
\label{eq:Lovasz-global}
\end{equation}
where $k$ indexes the choice of permutations $(\pi_{h't})_{h',t}$. The aggregated layout needs only $\theta^B_\omega$ and one row per scenario and permutation choice, whereas the block layout adds up to one row per block.

\begin{proposition}[Validity of Lov\'asz Benders cuts]
\label{prop:cut-valid}
Fix a scenario $\omega\in\Omega$ and a master-feasible binary $(x^s,z)$. The values $\theta^B_{\omega h't}=f_{h't}^\omega\bigl(S_{h't}(z)\bigr)$ for every block and $\theta^B_\omega=\sum_{t\in\mathcal T}\sum_{h'\in\mathcal H}\theta^B_{\omega h't}$, the battery recourse of $\omega$, satisfy every block cut~\eqref{eq:battery-cut} and every aggregated cut~\eqref{eq:Lovasz-global}, for all permutations. Moreover, for the same permutations, the block cuts and the linking inequality $\theta^B_\omega\ge\sum_{t\in\mathcal T}\sum_{h'\in\mathcal H}\theta^B_{\omega h't}$ imply the aggregated cut.
\end{proposition}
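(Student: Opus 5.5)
The plan is to reduce everything to Proposition~\ref{prop:lovasz} applied block by block, and then sum. Fix $\omega$ and a master-feasible binary $(x^s,z)$. For each block $(h',t)$ with $I_{h'}\neq\emptyset$, the vector $z_{h't}$ is binary. It is exactly the incidence vector $\mathbf 1_{S_{h't}(z)}$ of the active set $S_{h't}(z)=\{h\in I_{h'}:z_{hh't}=1\}$. Entries with $h\notin I_{h'}$ are zero by~\eqref{eq:assign-feasible-split} and do not enter $L^\omega_{h't,\pi}$. By Proposition~\ref{prop:submodular} and the normalization $f^\omega_{h't}(\emptyset)=0$ (Appendix~\ref{app:proof-monotone}), Proposition~\ref{prop:lovasz} applies and gives $f^\omega_{h't}(S_{h't}(z))\ge L^\omega_{h't,\pi}(\mathbf 1_{S_{h't}(z)})=L^\omega_{h't,\pi}(z_{h't})$ for every permutation $\pi$. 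Setting $\theta^B_{\omega h't}=f^\omega_{h't}(S_{h't}(z))$ therefore satisfies every block cut~\eqref{eq:battery-cut}. Monotonicity gives $\theta^B_{\omega h't}\ge0$, so the sign restriction on the epigraph variable also holds. I will also check that this value is the true block recourse $G^B_{t,h'}(z;\omega)$, since by definition $f^\omega_{h't}$ is the optimal value of~\eqref{eq:Gthprime} with that active set. Blocks with $I_{h'}=\emptyset$ contribute zero.

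Next I sum over blocks. By~\eqref{eq:recourse-decomp}, $\theta^B_\omega=\sum_{t}\sum_{h'}\theta^B_{\omega h't}$ is the battery recourse of $\omega$. Choose any permutations $(\pi_{h't})$, indexed by $k$, and add the block inequalities with $\pi=\pi_{h't}$:
\[
\theta^B_\omega=\sum_{t}\sum_{h'}\theta^B_{\omega h't}\ge\sum_t\sum_{h'}\sum_{h\in I_{h'}}\lambda^\omega_{hh't}(\pi_{h't})z_{hh't}=L^\omega_k(z).
\]
This is~\eqref{eq:Lovasz-global}, so the aggregated cuts are valid for all permutation choices.

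For the dominance statement I repeat the same summation, now with arbitrary master values of the variables rather than the recourse values. Suppose a point satisfies the block cuts for $(\pi_{h't})$ and the linking inequality $\theta^B_\omega\ge\sum\theta^B_{\omega h't}$. Chaining the two inequalities gives $\theta^B_\omega\ge L^\omega_k(z)$, and this holds for fractional $z$ too, since the argument is pure linear addition. No step is a real obstacle. The only care needed is bookkeeping: the block index sets must match between~\eqref{eq:battery-cut} and~\eqref{eq:Lovasz-global}, and the identification $z_{h't}=\mathbf 1_{S_{h't}(z)}$, which uses binarity and~\eqref{eq:assign-feasible-split}, must be made explicit.
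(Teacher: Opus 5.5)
Your proposal is correct and follows the paper's proof essentially step for step. Like the paper, you apply Proposition~\ref{prop:lovasz} to each block with $S=S_{h't}(z)$ (using normalization and monotonicity for $\theta^B_{\omega h't}\ge0$), sum over blocks via~\eqref{eq:recourse-decomp} to get the aggregated cuts, and obtain dominance by summing the block cuts at arbitrary master values and chaining with the linking inequality. Your explicit identification $z_{h't}=\mathbf 1_{S_{h't}(z)}$ and your remark that the dominance step also holds for fractional $z$ are harmless additions that the paper leaves implicit.
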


The proof, given in Appendix~\ref{app:proof-cutvalid}, combines Proposition~\ref{prop:lovasz} with the separability~\eqref{eq:recourse-decomp}. Either layout thus keeps the master a relaxation of the two-stage program, and for the same permutations the block layout gives a relaxation at least as tight, at the cost of up to $|\mathcal H||\mathcal T|$ rows per scenario and permutation instead of one. The algorithm below uses the block layout; Section~\ref{sec:algo-comparison} compares the two layouts with the same oracle and solver settings. Because the cut coefficients depend only on scenario data, a cut generated at one master iteration remains valid at all later ones. These cuts could also be separated within a single branch-and-cut tree; the reported implementation uses an iterative loop.

\subsection{The Lov\'asz-Cut Benders Algorithm}
\label{sec:lbb-algorithm}

We embed these cuts in a Benders-type value-function decomposition, called the \emph{Lov\'asz-cut Benders decomposition} (LcBD). Let $\mathbf x:=(x^c,x^s,u^c,u^s,z,w,v,C)$ collect the first-stage variables and $J^{\mathrm{FS}}(\mathbf x)$ denote the infrastructure, capacity and diesel-substitution terms of~\eqref{eq:first-stage-obj-decomp}. The displayed master retains scenario-wise epigraph variables for battery and fleet costs:
\begin{equation}
\begin{aligned}
\min_{\mathbf x,\theta^B,\theta^V}\quad
&J^{\mathrm{FS}}(\mathbf x)+\sum_{\omega\in\Omega}p_\omega(\theta_\omega^B+\theta_\omega^V)\\
\text{s.t.}\quad &\eqref{eq:design-1-split}\text{--}\eqref{eq:binary-split},\\
&\theta_\omega^B\ge\sum_{t\in\mathcal T}\sum_{h'\in\mathcal H}\theta^B_{\omega h't},\quad\omega\in\Omega,\\
&\theta^B_{\omega h't}\ge L^\omega_{h't,\pi}(z_{h't}),\quad \pi\in\Pi^B_{\omega h't},\ \omega\in\Omega,\ h'\in\mathcal H,\ t\in\mathcal T,\\
&\theta_\omega^V\ge F_k^\omega(x^s),\quad k\in\mathcal K_\omega^V,\ \omega\in\Omega,\\
&\theta^B_{\omega h't},\theta_\omega^B,\theta_\omega^V\ge0.
\end{aligned}
\label{eq:benders-master-obj}
\end{equation}
Blocks with $I_{h'}=\emptyset$ carry no variable. The sets $\Pi^B_{\omega h't}$ and $\mathcal K_\omega^V$ index the battery cuts of each block, by their permutations, and the fleet cuts generated so far. The master contains no scenario-wise batch-size, battery-stock or fleet-sizing variables. The cuts are scenario-wise (a multi-cut scheme), and no cut aggregates across scenarios. At a master incumbent $(\bar x^s,\bar z)$, scenario-wise evaluations give the recourse costs. Battery cuts are generated for a scenario when $\bar\theta^B_\omega$ underestimates its battery recourse by more than a tolerance $\varepsilon^{\mathrm{cut}}\ge0$; each permutation used then adds at most one row per block, omitting rows whose coefficients are all zero and rows already in the master. Fleet cuts follow the same rule and are generated once per scenario (Proposition~\ref{prop:fleet-affine}).

\paragraph{Battery cut.} The battery cuts are the block cuts~\eqref{eq:battery-cut}, with chain marginals computed along a permutation that places the hubs assigned in $\bar z$ before the unassigned ones in each $I_{h'}$; by Proposition~\ref{prop:lovasz}, each exact block cut is tight at $\bar z$, so together they are tight for the battery recourse of the scenario. Exact marginals are nonnegative by monotonicity. The implementation must preserve the under-estimator property when values are computed numerically; clipping a negative difference to zero or rounding separate value estimates downward does not, by itself, ensure cut validity. Positive coefficients below a small threshold, which arise from round-off between nearly equal chain values, are rounded down to zero; because $z\ge0$, this only weakens the cut.

\paragraph{Fleet cut.} The fleet recourse $G^V(x^s;\omega)=\sum_{t\in\mathcal T}\sum_{p\in\mathcal P_t(\omega)} G^V_{t,p}(x^s;\omega)$ is separable across hub--period pairs and each $x^s_{ht}$ is binary, so the fleet cost admits an \emph{exact} linear representation on the binary lattice:
\begin{equation}
\theta_\omega^V \;\ge\; F_k^\omega(x^s):=G^V(\bar x^s;\omega) + \sum_{h\in\mathcal H}\sum_{t\in\mathcal T} d_{ht}^\omega\,(x^s_{ht} - \bar x^s_{ht}),
\label{eq:fleet-cut}
\end{equation}
where $d_{ht}^\omega$ is not a local subgradient of the dwell-time maximum but the exact \emph{flip difference} of the fleet cost between $x^s_{ht}=1$ and $x^s_{ht}=0$, which evaluates both branches of~\eqref{eq:dwell-time}:
\begin{equation}
\begin{aligned}
d_{ht}^\omega
={}&\frac{\delta_tM_t^v}{O_c}
\sum_{p\in P(h)\cap\mathcal P_t(\omega)}
\Bigl(\alpha_{pt}(\omega)+\Phi^{-1}(1-\varepsilon^C)\beta_{pt}(\omega)\Bigr)
\\[-1mm]
&\quad\times
\Bigl[\max\{T^r_{hp},T^s\}
-\max\{T^r_{hp},T^{c,\mathrm{fast}}R_{ph}(\omega)\}\Bigr].
\end{aligned}
\label{eq:fleet-gradient}
\end{equation}
The floor $T^r_{hp}$ cancels between the two branches, so the same coefficient holds for the incremental dwell~\eqref{eq:dwell-excess}. When the floor binds in both modes ($T^r_{hp}\ge T^{c,\text{fast}} R_{ph}(\omega)\ge T^s$), the bracket is zero and correctly reports that switching the service mode at $(h,t)$ has no fleet-cost effect, a case that a one-sided linearization of the maximum would misstate.

\paragraph{Fleet representation and optional master simplification.}
\label{sec:fleet-simplification}
\begin{proposition}[Exact affine fleet recourse]
\label{prop:fleet-affine}
For every scenario $\omega\in\Omega$ and every binary $x^s$,
\[
G^V(x^s;\omega)=G^V(\mathbf 0;\omega)+\sum_{h\in\mathcal H}\sum_{t\in\mathcal T}d^\omega_{ht}\,x^s_{ht},
\]
with $d^\omega_{ht}$ given by~\eqref{eq:fleet-gradient}. Hence the fleet cut~\eqref{eq:fleet-cut} coincides with $G^V(\cdot;\omega)$ at every binary $x^s$.
\end{proposition}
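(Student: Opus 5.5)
The plan is to reduce the fleet recourse to a sum of terms, each depending on a single binary variable $x^s_{ht}$. For a function of one binary argument, the affine interpolation through its two values is exact. We start from the separable form $G^V(x^s;\omega)=\sum_{t\in\mathcal T}\sum_{p\in\mathcal P_t(\omega)}\delta_tM^v_t\,n^v_{t,p}(\omega)$ together with the closed form~\eqref{eq:fleet-closed}. Substituting the incremental dwell~\eqref{eq:dwell-excess} gives
\[
G^V(x^s;\omega)=\sum_{t\in\mathcal T}\sum_{p\in\mathcal P_t(\omega)}\frac{\delta_tM^v_t}{O_c}\Bigl(\alpha_{pt}(\omega)+\Phi^{-1}(1-\varepsilon^C)\beta_{pt}(\omega)\Bigr)\sum_{h\in\mathcal H_p}\tilde r_{pht}(\omega).
\]
The key observation is that $\tilde r_{pht}(\omega)$ depends on the first stage only through the single coordinate $x^s_{ht}$. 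The multiplier $\alpha_{pt}+\Phi^{-1}(1-\varepsilon^C)\beta_{pt}$ is scenario data, independent of $x^s$; in particular, under the fixed-demand approximation it does not involve $w$. We then exchange the order of summation over $p$ and $h$ using $h\in\mathcal H_p\iff p\in P(h)$. This writes $G^V$ as $\sum_{h,t}\phi^\omega_{ht}(x^s_{ht})$, where each $\phi^\omega_{ht}$ is a univariate function defined on $\{0,1\}$.

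Next, for any function $\phi$ on $\{0,1\}$ we have the identity $\phi(x)=\phi(0)+[\phi(1)-\phi(0)]x$ for $x\in\{0,1\}$. We apply it to each $\phi^\omega_{ht}$. Summing the constants $\phi^\omega_{ht}(0)$ gives $G^V(\mathbf 0;\omega)$. The slope $\phi^\omega_{ht}(1)-\phi^\omega_{ht}(0)$ is obtained by evaluating~\eqref{eq:dwell-excess} at $x^s_{ht}=1$, which gives $\max\{0,T^s-T^r_{hp}\}$, and at $x^s_{ht}=0$, which gives $\max\{0,T^{c,\mathrm{fast}}R_{ph}(\omega)-T^r_{hp}\}$. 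Using $\max\{0,a-c\}=\max\{c,a\}-c$, the floor $T^r_{hp}$ cancels in the difference, leaving exactly the bracket in~\eqref{eq:fleet-gradient}. Multiplying by the path weights and summing over $p\in P(h)\cap\mathcal P_t(\omega)$ then reproduces $d^\omega_{ht}$. This establishes the displayed identity for every binary $x^s$.

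For the second claim, we take any incumbent $\bar x^s$ and apply the identity at both $x^s$ and $\bar x^s$. Subtracting the two expressions gives $G^V(x^s;\omega)=G^V(\bar x^s;\omega)+\sum_{h,t}d^\omega_{ht}(x^s_{ht}-\bar x^s_{ht})$, which is exactly the right-hand side $F^\omega_k$ of~\eqref{eq:fleet-cut}. Hence the fleet cut is tight at every binary point.

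The main point of care is the claim that each dwell term involves only $x^s_{ht}$ and no other first-stage variable. This rests on three facts. The dwell formula~\eqref{eq:dwell-time} uses only $x^s_{ht}$. The fleet block is evaluated on the scenario-active set $\mathcal P_t(\omega)$, which is fixed by data. And $G^V_{t,p}$ equals the optimal value of the fleet linear program in Online Supplement~S7, which we take to coincide with~\eqref{eq:fleet-closed}, as the paper states. Given these, no cross-hub products appear, and the argument is purely algebraic.
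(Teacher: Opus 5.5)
Your proposal is correct and follows essentially the same route as the paper's proof. The paper likewise writes each incremental-dwell maximum at binary $x^s_{ht}$ as the affine combination of its two branch values, collects coefficients over $p\in P(h)\cap\mathcal P_t(\omega)$ so that the floor $T^r_{hp}$ cancels and $d^\omega_{ht}$ results, and then obtains tightness of the fleet cut by applying the identity at the binary incumbent $\bar x^s$.
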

The proof is given in Appendix~\ref{app:fleet-verification}. Because the expected fleet cost $\sum_{\omega}p_\omega G^V(x^s;\omega)$ is affine in $x^s$ on the binary lattice, it can equivalently be placed in the master objective with coefficients $\sum_\omega p_\omega d^\omega_{ht}$ and the constant $\sum_\omega p_\omega G^V(\mathbf 0;\omega)$, which removes the epigraph variables $\theta^V_\omega$ and the fleet cuts. An affine function that is nonnegative at the vertices of the unit cube is nonnegative on the cube, so, once the fleet cut of each scenario has been generated, the two representations define the same master problem and the same continuous relaxation. The reported runs generate one fleet cut per scenario in the first iteration; all later iterations add battery cuts only.

Algorithm~\ref{alg:LBDA} summarizes the procedure. With exact master solves and exact recourse values, the finite number of binary vectors $(x^s,z)$ and cuts that are tight at the evaluated design give finite termination. If an iteration adds no cut, every epigraph is within $\varepsilon^{\mathrm{cut}}$ of the evaluated recourse, so the evaluated cost of the master incumbent exceeds the master bound by at most $2\varepsilon^{\mathrm{cut}}$ plus the master's optimality tolerance; the target gap is therefore reached at the first iteration that adds no cut, provided that $2\varepsilon^{\mathrm{cut}}$ plus the absolute master optimality gap is below $\varepsilon\max\{1,|\mathrm{UB}|\}$, and $\varepsilon^{\mathrm{cut}}$ is chosen accordingly.

\begin{algorithm}[!htbp]
\DontPrintSemicolon
\caption{Lov\'asz-cut Benders algorithm with block battery epigraphs and fleet epigraphs}
\label{alg:LBDA}
\KwIn{Gap tolerance $\varepsilon>0$; cut tolerance $\varepsilon^{\mathrm{cut}}\ge0$; scenarios $\Omega$ and probabilities $p_\omega$}
\KwOut{Best first-stage design found and upper bound $\mathrm{UB}$}
$\Pi^B_{\omega h't},\mathcal K_\omega^V\gets\emptyset$ for all $\omega,h',t$; $\mathrm{LB}\gets-\infty$; $\mathrm{UB}\gets+\infty$\;
\While{no incumbent is saved, or $\mathrm{UB}-\mathrm{LB}>\varepsilon\max\{1,|\mathrm{UB}|\}$}{
  Solve~\eqref{eq:benders-master-obj}; obtain an incumbent $(\bar{\mathbf x},\bar\theta^B,\bar\theta^V)$ and a valid master bound $\ell^{\mathrm{MP}}$\;
  $\mathrm{LB}\gets\max\{\mathrm{LB},\ell^{\mathrm{MP}}\}$; $\widehat G\gets0$\;
  \ForEach{$\omega\in\Omega$}{
    Evaluate the battery recourse $B_\omega$ from~\eqref{eq:Gthprime} and the fleet recourse $V_\omega$ from~\eqref{eq:fleet-closed}\;
    \If{$B_\omega>\bar\theta_\omega^B+\varepsilon^{\mathrm{cut}}$}{
      \ForEach{block $(h',t)$ with $I_{h'}\neq\emptyset$}{
        Form an active-first permutation $\pi$ of $I_{h'}$ and its chain marginals\;
        Add $\pi$ to $\Pi^B_{\omega h't}$, i.e., add the block cut~\eqref{eq:battery-cut}, unless its coefficients are all zero or the same row is already present\;
      }
    }
    \If{$V_\omega>\bar\theta_\omega^V+\varepsilon^{\mathrm{cut}}$}{
      Compute the flip differences~\eqref{eq:fleet-gradient} and add cut~\eqref{eq:fleet-cut}\;
    }
    $\widehat G\gets\widehat G+p_\omega(B_\omega+V_\omega)$\;
  }
  $J^{\mathrm{FEAS}}\gets J^{\mathrm{FS}}(\bar{\mathbf x})+\widehat G$\;
  Update $\mathrm{UB}$ and the saved incumbent if $J^{\mathrm{FEAS}}$ improves it\;
  If the target gap remains open and no cut was added, stop (LcBD) or tighten the master tolerance and continue (LcBD+)\;
}
\KwRet{the saved design and $\mathrm{UB}$}\;
\end{algorithm}

\subsection{Acceleration Techniques}
\label{sec:acceleration}

Algorithm~\ref{alg:LBDA} is valid as stated, but two bottlenecks govern its running time: the many evaluations of $f_{h't}^\omega(S)$ required for cut generation, and the time spent in master solves. We address the first with two computational accelerations, an analytic-gradient oracle and the memoization of evaluated sets, and the second with an adaptive master tolerance. Memoization and the adaptive tolerance leave the bounds unchanged; the possible overstatement of the cuts by the numerical oracle is bounded in Section~\ref{sec:analytic-subproblem}. We refer to the resulting variant as LcBD+. Like LcBD, it uses a single permutation per block, so a violated scenario adds at most one row per block in each iteration.

\subsubsection{Analytic-gradient evaluation of the battery subproblem}
\label{sec:analytic-subproblem}

Cut generation evaluates $f_{h't}^\omega(S)$ along permutation chains, so each Benders iteration requires many battery subproblem solves with $S$ fixed. For fixed $S$, optimality gives $n^b=\Psi$, and one may choose $g_h=1/Q_h$ for every active link without loss. A hub with $\mu_{ht}(\omega)=0$ has zero demand moments, its batch size is fixed at $Q_h=1$ by the small-batch bound, and it contributes neither inventory nor transportation cost. Hence $f_{h't}^\omega(S)=f_{h't}^\omega(S^+)$ with $S^+:=\{h\in S:\mu_{ht}(\omega)>0\}$, and the recourse is zero when $S^+=\emptyset$. For nonempty $S^+$ the pooled term of~\eqref{eq:f-reduced} is
\begin{equation}
\sqrt{C_{S^+}+\sum_{h\in S^+}\frac{Q_h^2}{6}},
\qquad
C_{S^+}:=\sum_{h\in S^+}\left((T^c+T^t_{hh'})\sigma_{ht}^2(\omega)-\frac16\right).
\label{eq:variance-soc-clean}
\end{equation}
When $C_{S^+}\ge0$, this is the Euclidean norm of $(\sqrt{C_{S^+}},(Q_h/\sqrt6)_{h\in S^+})$ and is convex in $Q$. A support-independent sufficient condition is $(T^c+T^t_{hh'})\sigma^2_{ht}(\omega)\ge1/6$ for every positive-demand feasible link. We verify it on the instance data before using the convex oracle (Section~\ref{sec:algo-comparison}). If an evaluated support had $C_{S^+}<0$, its evaluation would instead require a global nonconvex solve with valid value bounds. With the convex reciprocal terms $1/Q_h$ and the box $1\le Q_h\le\overline Q_{hh't}(\omega)$, \eqref{eq:f-reduced} is then a smooth convex program in $|S^+|$ variables. We solve it by a projected quasi-Newton method with analytic gradients, avoiding the construction of a conic model for every evaluation. Each evaluation also records a certified bound on its suboptimality: if $\phi_S(Q)$ denotes the objective of~\eqref{eq:f-reduced} for fixed $S$ and $\hat Q$ the returned batch vector, convexity gives $\phi_S(\hat Q)-\min_{Q\in\mathcal Q(S^+)}\phi_S(Q)\le\max_{Q\in\mathcal Q(S^+)}\nabla\phi_S(\hat Q)^\top(\hat Q-Q)$, a closed-form computation on the box. Each computed value is the cost of a feasible batch vector and thus lies above the exact value by at most this bound $\bar\epsilon$, so each computed chain marginal can exceed its exact counterpart by at most $\bar\epsilon$. Because each swapping hub is assigned to one charging hub per period, the block cuts of a scenario, summed over its blocks, can therefore overstate the exact battery recourse by at most $|\mathcal H||\mathcal T|\bar\epsilon$, while upper bounds remain valid because they use feasible recourse costs; Section~\ref{sec:algo-comparison} reports these quantities. Because the demand moments and batch bounds are fixed data for a given run, $f_{h't}^\omega(S)$ is a pure function of $(h',t,\omega,S)$, and all evaluations are memoized.

\subsubsection{Master solves}
\label{sec:impl-refinements}

The master is solved to an \emph{adaptive} optimality tolerance, coarse while the Benders gap is large and tightened as it closes, and tightened at once if a coarse solve yields no violated cut. The lower bound is always taken from the master's best \emph{bound} rather than its incumbent objective, so validity is unaffected. Each master is solved from scratch, without a starting solution (Section~\ref{sec:algo-comparison} explains this choice). As a further safeguard, if a reported master bound exceeds the best evaluated upper bound by more than a small numerical tolerance, the master is solved again from scratch, and the bound is discarded if the inconsistency persists.

Proposition~\ref{prop:cut-valid} holds for \emph{every} permutation of $I_{h'}$, and every permutation that places the assigned hubs first gives a cut that is tight at $\bar z$, so several such cuts per block could be added in one iteration. Both variants use one permutation, ordering the assigned and the unassigned hubs each by decreasing singleton value $f_{h't}^\omega(\{h\})$. Section~\ref{sec:algo-comparison} reports the effect of each technique by removing one at a time, and the effect of three permutations per block.

\section{Numerical Study}
\label{sec:numerical}

\subsection{Instances and Scenario Generation}
\label{sec:scenarios}

\subsubsection{Network data}
We construct test instances from the Freight Analysis Framework (FAF5) database maintained by the U.S.\ Bureau of Transportation Statistics \citep{bts2022faf5}. The network covers seven southeastern U.S.\ states (Alabama, Florida, Georgia, Mississippi, North Carolina, South Carolina, and Tennessee), comprising 111 logistics hubs connected by 2{,}996 road links with calibrated travel times. Origin--destination freight flows are drawn from FAF5 commodity flow estimates, yielding up to 249 origin--destination pairs with annual truck volumes ranging from 237 to 353{,}800 trucks per year. Shortest paths on the weighted road network define the set of transportation paths~$\mathcal P$, and inter-hub travel times $T^t_{hh'}$ are computed as all-pairs shortest path distances.

The hubs are consolidation and transfer facilities rather than roadside service sites: an FAF node aggregates the freight activity of its region, and in a hyperconnected relay network long-haul flow between two such nodes is executed as a sequence of hub-to-hub legs with load, trailer, or driver exchanged at the intermediate hubs \citep{montreuil2011toward,crainic2023hyperconnected,liu2025dynamic}. The intermediate stops, and hence the dwell floor $T^r_{hp}$ of Section~\ref{sec:capacity}, therefore belong to the operating model rather than being introduced for energy replenishment.

\subsubsection{Planning horizon and demand scenarios}
The planning horizon covers 2026--2040 in yearly periods, $\mathcal T=\{0,\ldots,14\}$, with period $t$ corresponding to calendar year $y_t:=2026+t$; it spans the expected transition of heavy-duty electric trucks from early deployment to mainstream adoption. The mean arrival rate $\alpha_{pt}(\omega)$ in~\eqref{eq:theta-normal} is driven by two independent sources of long-term uncertainty, the growth of total freight volume and the pace of electric-truck adoption:
\begin{equation}
\alpha_{pt}(\omega) \;=\; \frac{D_p \cdot (1+g_\omega)^{y_t-2022} \cdot \varphi_t(\omega)}{Y},
\label{eq:alpha-scenario}
\end{equation}
where $D_p$ is the base-year (2022) annual truck flow on path $p$ from FAF5, $g_\omega$ is the annual compound freight growth rate, and $\varphi_t(\omega)$ is the share of freight served by electric trucks. Based on FAF5 projections (approximately 29\% cumulative growth from 2022 to 2030), we draw $g_\omega$ from $\mathcal N(0.033,\,0.01^2)$, clipped to $[0.005,0.08]$. The electric share follows a logistic adoption curve. Let $q_\omega$ denote a sampled adoption-trajectory parameter and let $a_\omega=1.5q_\omega$ denote the corresponding logistic asymptote; then
\begin{equation}
\varphi_t(\omega) = \frac{a_\omega}{1+\exp\bigl(-k(y_t-t_{\mathrm{mid}}(\omega))\bigr)},
\qquad
t_{\mathrm{mid}}(\omega)=2026+\frac{1}{k}\log\!\left(\frac{a_\omega}{0.01}-1\right),
\label{eq:penetration-curve}
\end{equation}
where $k=0.35$, so $\varphi_{2026}(\omega)=0.01$ by construction. The adoption parameter is sampled as $q_\omega\sim\mathcal N(0.30,0.10^2)$ and clipped to $[0.05,0.70]$ before constructing the curve. This mean is consistent with industry projections: the International Council on Clean Transportation (ICCT) estimates heavy-duty zero-emission vehicle (ZEV) sales shares of 39--52\% by 2030--2032 \citep{icct2023ira}. Fleet penetration lags sales share because of vehicle lifetimes.
The standard deviation is set proportionally as $\beta_{pt}(\omega) = \alpha_{pt}(\omega)/3$, reflecting moderate within-period demand variability relative to the mean. The pair $(g_\omega,q_\omega)$ is drawn by Latin Hypercube Sampling (LHS) \citep{mckay1979comparison}: each marginal is stratified into $|\Omega|$ equal-probability intervals, one sample per stratum is mapped through the corresponding normal quantile before clipping, and each scenario receives probability $p_\omega=1/|\Omega|$.

Long-run uncertainty is carried by two common drivers $(g_\omega,q_\omega)$. Within a period, potential path demand shares a multiplicative factor, so the generator represents uncertainty about the speed of electrification rather than independent changes in corridor priorities. The coefficient of variation $\beta_{pt}/\alpha_{pt}=1/3$ is a calibration assumption, not a statistical estimate; it influences both stock sizing and fleet margins. With $\beta_{pt}=\alpha_{pt}/3$, the demand-variance input of Section~\ref{sec:capacity} exceeds the Poisson count variance on corridors with more than nine trucks per day; in SE-5 the exceptions are thin early-phase hubs carrying 2.3\% of battery demand over the horizon (0.5\% in 2040), where the input is still at least 56\% of the Poisson level. Section~\ref{sec:planning-insights} reports a stress test with spatially heterogeneous adoption.

\subsubsection{Path activation and phased deployment}
The scenario-dependent active path sets $\mathcal P_t(\omega)$ of~\eqref{eq:activation-scenario} are determined by comparing each path's electric truck demand in scenario $\omega$ against the activation threshold $d_{\min} = 5$ trucks per day. With penetration growing from about 1\% in 2026 to an average of about 33\% in 2040, the number of active paths increases over the horizon, and its \emph{timing varies across scenarios}. On the full network ($|\mathcal P|=249$), the union $\mathcal P_t$ of scenario-wise active sets grows from 6 paths in 2026 to 215 in 2040, and individual scenarios activate 49\% to 100\% of the union in any given year.

\paragraph{Diesel-substitution premium.}
The premium $c^{\mathrm{D}}$ prices demand on active corridors that is not served electrically. We set $c^{\mathrm{D}}=\$250$ per truck-day, between the operating-cost advantage of electric over diesel operation, about \$160--220 per truck-day over typical long-haul daily distances, and its sum with the value of avoided CO$_2$ emissions, \$250--340; Online Supplement~S1 gives the derivation and its sources, and S4 re-solves SE-5 at \$125 and \$500, which bracket this range.

\subsubsection{Test instances}
\label{sec:test-instances}
We construct five nested test instances of increasing size by selecting subsets of the network: SE-1 covers Georgia (18 hubs, 5 paths), SE-2 adds Florida (42, 28), SE-3 adds South and North Carolina (75, 86), SE-4 adds Tennessee (91, 148), and SE-5 is the full seven-state network (111, 249).
All instances use $|\mathcal T|=15$ yearly periods and $|\Omega|=50$ scenarios generated via LHS. The geographic restriction $I_{h'}$ limits BSS-to-CCS assignments to hubs within 3.6 hours driving distance, yielding an average of $|I_{h'}| \approx 14$ candidate swapping hubs per charging hub. Each hub's base-year grid power budget is an assumption that increases with the transmission voltage of its node, taken from the Homeland Infrastructure Foundation-Level Data (HIFLD) transmission lines, with magnitudes following distribution-level studies \citep{borlaug2021heavy,then2025planning}. Budgets grow at 3.8\% per year, the rate implied by the U.S.\ Department of Energy's National Transmission Needs Study \citep{doe2023needs}, so the grid constrains early deployment most strongly; Online Supplement~S1 gives the tiers and their derivation. Cost parameters are derived from published cost data in Online Supplement~S1 (Table~S1). The per-truck cost $M^v_t$ combines annualized vehicle capital with driver wages and benefits, so each hour of replenishment-induced dwell is priced at the full cost of a driver--truck unit; the battery holding cost $M^b_t$ is calibrated to battery-as-a-service lease rates and declines at 8\% per year.

\paragraph{Charging tiers and battery logistics.}
A truck charging on site holds a driver and a vehicle, so it uses 350\,kW-class equipment, $T^{c,\text{fast}}=15$ minutes per 89\,kWh pack \citep{wattev2025mcs}; returned swap batteries recharge in $T^c=120$ minutes per pack \citep{qi2023scaling}. Battery shuttles are dedicated medium-duty trucks carrying up to $\bar Q=19$ packs at $\eta_t=\$0.4$K per driving-day, without a driver (Online Supplement~S4 adds one and tests the third-party rate of \citet{qi2023scaling}); co-located pairs incur no shuttle cost and $T^t_{hh}=7.2$ minutes of handling. The reference pack's 120-mile range corresponds to a favorable duty cycle, about 0.74\,kWh per mile, against 1.4--2.1\,kWh per mile in heavily loaded Class~8 operation; Section~\ref{sec:planning-insights} summarizes how the plan responds to heavier energy use.

\subsubsection{Scenario-fan stability}
\label{sec:saa-stability-main}
To choose the number of scenarios, we test in-sample and out-of-sample stability on SE-3 with scenario fans of $N \in \{5, 10, 30, 50, 80, 100\}$, five independent LHS replications each, all solved by LcBD+ to a certified gap of at most 0.47\%. Each first-stage plan is re-evaluated on an independent set of $1{,}000$ scenarios, with capacity shortfalls, which the training model excludes by construction, priced at the diesel-substitution premium $c^{\mathrm{D}}$. From $N=5$ to $N=50$, the frequency of evaluation scenarios with a capacity exceedance falls from 16.7\% to 2.3\%, the mean difference between out-of-sample cost and in-sample objective falls from 26.7\% to 3.5\%, almost all of it the remediation charge, and the in-sample variability falls from 14.5\% to 7.6\%; the standard error of each 1{,}000-scenario cost estimate is at most 2.0\%. Beyond $N=50$ the in-sample variability rises again, because the replications with the most extreme sampled loads install markedly more capacity, so we retain $N=50$. The exceedance frequencies describe empirical protection under the selected generator rather than guaranteed violation probabilities, and the cost differences are policy-evaluation discrepancies rather than optimality-gap estimates; Online Supplement~S3 reports all fan sizes and replication ranges. The 50-scenario set used in the rest of the study is a separate draw from the same generator with a milder upper tail, which explains why its SE-3 objective (Table~\ref{tab:algo-comparison}) lies below the in-sample values of the five replications.

\subsection{Algorithmic Performance}
\label{sec:algo-comparison}

We compare three methods: the extensive form solved directly by Gurobi (Extensive), the Lov\'asz-cut Benders decomposition of Algorithm~\ref{alg:LBDA} (LcBD), and its accelerated variant of Section~\ref{sec:acceleration} (LcBD+). All runs use $|\Omega|=50$ scenarios, $\gamma=1$, a 3-hour wall-clock limit, a $0.01\%$ target gap and a 12-core Apple M3 Pro workstation with 36\,GB of memory, with Gurobi 13.0; the master problems and the extensive form use 4 threads, and the conic subproblems of LcBD use Gurobi's default thread setting. The extensive-form MIPGap and the Benders outer tolerance are $10^{-4}$, and the extensive form runs with a 28\,GB soft memory limit; LcBD solves each master to a fixed tolerance of $10^{-4}$, and LcBD+ uses the adaptive tolerances of Section~\ref{sec:impl-refinements}. Both decomposition methods add a cut only when an epigraph is violated by more than $\varepsilon^{\mathrm{cut}}=0.1$ (\$K), so $2\varepsilon^{\mathrm{cut}}$ is below $1.4\times10^{-5}$ of the objective on every instance; battery-cut coefficients below $10^{-6}$ are set to zero, and the master-bound safeguard uses a relative tolerance of $10^{-5}$. Masters are solved from scratch because, in preliminary runs, a master re-solved from the previous incumbent occasionally reported a bound above its optimum.

Extensive solves the same model in an exact strengthened form, so that it benefits from the same exact reformulations as the decomposition: assignment, batch and dispatch variables are defined on feasible links only, links without demand are removed, the pooled-variance constraint is written as a second-order cone, which is exact for binary $z$ under the link condition of Section~\ref{sec:analytic-subproblem}, and the fleet cost is replaced by its exact affine form (Proposition~\ref{prop:fleet-affine}). On SE-5 the model still has 1.26 million rows, 1.22 million columns and 0.57 million quadratic constraints. Online Supplement~S6 reports the formulation without these reformulations.

Table~\ref{tab:algo-comparison} summarizes the results. Extensive reaches the target on SE-1 and SE-2, in 69 seconds and 4.7 minutes. On SE-3 its bound reaches the optimum, but branch and bound stops at the 28\,GB memory limit after 94 minutes with an incumbent $0.035\%$ above it. On SE-4 its lower bound comes within $0.05\%$ of the best known value after 31 minutes, but its first incumbent within 1\% of that value appears only after 2.6 hours, and the run ends at a gap of $0.92\%$; on SE-5 the root relaxation is not solved within the limit. On SE-3 and SE-4 the difficulty therefore lies in finding good solutions rather than in the strength of the relaxation. LcBD reaches the target on all five instances, in 62 seconds to 34.3 minutes. Its time is dominated by the conic battery subproblems, which take 85\% of the SE-5 run. LcBD+ removes this bottleneck: it reaches the target in 10 seconds to 4.1 minutes, five to eight times faster than LcBD, and the two variants end within $0.004\%$ of each other. On SE-5, the gap of LcBD+ is 2.5\% after the second master iteration, 0.37\% after the third and 0.06\% after the fourth. The decomposition solutions improve on the extensive-form incumbents by $0.035\%$ on SE-3 and $0.9\%$ on SE-4.

\begin{table}[!htbp]
\centering
\caption{Computational performance of the extensive form (Extensive), the Lov\'asz-cut Benders decomposition (LcBD, Algorithm~\ref{alg:LBDA}), and its accelerated variant (LcBD+, Section~\ref{sec:acceleration}); $|\Omega|=50$ scenarios, 4 threads, 3-hour limit, $0.01\%$ target gap. ``t$\,\to x$\%'' is the wall-clock time to reach an optimality gap of $x$\%; ``---'' indicates that the level was not reached.}
\label{tab:algo-comparison}
\small
\begin{tabular}{ll rrr rr}
\toprule
Instance $(|\mathcal H|,|\mathcal P|)$ & Method & t$\,\to$1\% & t$\,\to$0.1\% & t$\,\to$0.01\% & Final gap & Best UB \\
\midrule
SE-1 $(18,5)$    & Extensive & 69s & 69s & 69s & $<$0.01\% & \textbf{14{,}334.4} \\
                 & LcBD      & 62s & 62s & 62s & $<$0.01\% & \textbf{14{,}334.4} \\
                 & LcBD+     & \textbf{10s} & \textbf{10s} & \textbf{10s} & $<$0.01\% & \textbf{14{,}334.4} \\
\midrule
SE-2 $(42,28)$   & Extensive & 4.7m & 4.7m & 4.7m & $<$0.01\% & \textbf{32{,}648.5} \\
                 & LcBD      & 3.2m & 3.2m & 3.2m & $<$0.01\% & \textbf{32{,}648.5} \\
                 & LcBD+     & \textbf{33s} & \textbf{34s} & \textbf{34s} & $<$0.01\% & \textbf{32{,}648.5} \\
\midrule
SE-3 $(75,86)$   & Extensive & 52.9m & 52.9m & --- & 0.035\%$^{\mathrm{a}}$ & 57{,}793.3 \\
                 & LcBD      & 8.2m & 8.2m & 8.2m & $<$0.01\% & \textbf{57{,}773.1} \\
                 & LcBD+     & \textbf{92s} & \textbf{92s} & \textbf{92s} & $<$0.01\% & \textbf{57{,}773.1} \\
\midrule
SE-4 $(91,148)$  & Extensive & 2.57h & --- & --- & 0.92\% & 85{,}037.3 \\
                 & LcBD      & 12.5m & 16.4m & 16.5m & $<$0.01\% & \textbf{84{,}275.4} \\
                 & LcBD+     & \textbf{2.3m} & \textbf{2.6m} & \textbf{2.7m} & $<$0.01\% & 84{,}278.4 \\
\midrule
SE-5 $(111,249)$ & Extensive & --- & --- & --- & ---$^{\mathrm{b}}$ & --- \\
                 & LcBD      & 21.9m & 27.9m & 34.3m & $<$0.01\% & \textbf{119{,}916.9} \\
                 & LcBD+     & \textbf{3.4m} & \textbf{3.7m} & \textbf{4.1m} & $<$0.01\% & 119{,}919.3 \\
\bottomrule
\end{tabular}

\par\smallskip
\begin{minipage}{\textwidth}\footnotesize
$^{\mathrm{a}}$Stopped at the memory limit after 94 minutes. $^{\mathrm{b}}$Root relaxation not solved within 3 hours.
\end{minipage}
\end{table}

\paragraph{Contribution of each component.} In Figure~\ref{fig:ablation}, each change slows the part of the run it targets. The Gurobi conic oracle and the removal of memoization lengthen only the battery evaluations. A fixed master tolerance lengthens only the master solves, from 1.1 to 5.5 minutes on SE-5, and has no measurable effect on SE-3. Three permutations per block double the time on both instances, mainly because each permutation requires its own chain of subproblem evaluations, while saving at most one iteration. The block layout matters most. With the aggregated cuts~\eqref{eq:Lovasz-global}, which are dominated by the block cuts (Proposition~\ref{prop:cut-valid}), the method reaches 0.1\% after 2.4 and 6.0 minutes but not 0.01\% within three hours on either instance, after 63 and 37 iterations instead of 4 and 6, and almost all of its time is spent in master solves.

\begin{figure}[!htbp]
\centering
\includegraphics[width=\textwidth]{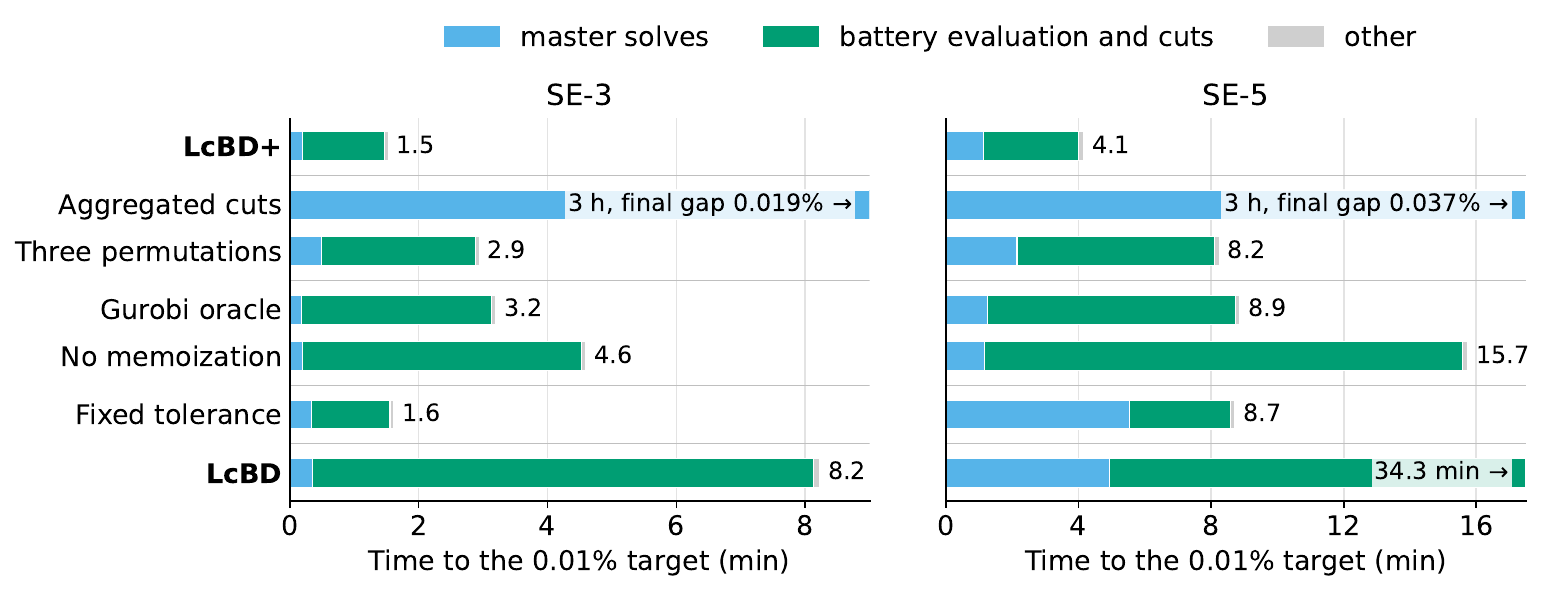}
\caption{Leave-one-out comparison on SE-3 and SE-5 ($|\Omega|=50$, 3-hour limit): wall-clock time to the $0.01\%$ target. Each variant changes one component of LcBD+: aggregated instead of block cuts, three permutations per block instead of one, the Gurobi conic oracle instead of the analytic one, no memoization, or a fixed instead of an adaptive master tolerance; LcBD changes the last three together. Truncated bars are labelled with their value.}
\label{fig:ablation}
\end{figure}

\paragraph{Oracle accuracy.} On SE-5 the convexity condition of Section~\ref{sec:analytic-subproblem} holds on all 530{,}724 positive-demand link--period--scenario combinations (minimum 0.246 against $1/6\approx0.167$), and over the 1.18 million oracle solves of the LcBD+ run the certified suboptimality bound is at most $\bar\epsilon=1.3\times10^{-6}$ (\$K). A battery cut can therefore overstate the exact recourse by at most $|\mathcal H||\mathcal T|\bar\epsilon\approx0.002$ (\$K) per scenario, which changes no reported gap. Of the 3.2 million chain differences, 222 were negative, all smaller than $1.4\times10^{-12}$ in magnitude, and were set to zero; memoization serves 81.7\% of the 6.49 million evaluation requests. For LcBD, the 64{,}353 battery-cut rows of the SE-5 run overstate the recourse at the best plan by at most $2.0\times10^{-4}$ (\$K) per row.

\subsection{The Value of Stochastic and Integrated Planning}
\label{sec:joint-vs-sequential}

Our central modeling claim is that two forms of integration matter: planning against the demand \emph{distribution} rather than its mean, and optimizing location, battery inventory, and fleet \emph{jointly} rather than in stages. We compare the integrated stochastic plan, the solution of the recourse problem (RP), with three baselines on SE-5: the mean-value plan (EV) removes distributional information, while two controlled sequential variants remove downstream-cost feedback from the siting objective.

\begin{itemize}
\item \textbf{Mean-value plan (EV)}: the joint model solved on the mean scenario (expected demand moments, mean-threshold activation), retaining structural integration while removing distributional information. This is the classical expected-value problem of \citet{birge1997introduction}.
\item \textbf{Sequential--informed}: a staged location$\,\to\,$inventory$\,\to\,$fleet pipeline. The location-and-capacity stage uses the \emph{same} first-stage feasible set as RP, including all 50 scenario-wise charging-throughput constraints, but its objective replaces exact downstream recourse with expected diesel substitution and a battery-holding surrogate, $\delta_t M_t^b (T^c + 2T^t_{hh'})\,\mathbb{E}_\omega[\mu_{ht}(\omega)]$ per assignment, the demand-weighted-distance proxy that classical location models use to price inventory \citep{shen2003joint,farahani2015location}. It therefore retains uncertainty-aware capacity feasibility while omitting variance pooling and the dwell-time--fleet feedback from the location objective.
\item \textbf{Sequential--naive}: the same scenario-feasible pipeline with no battery surrogate. This controlled variant removes all downstream-cost awareness from the location objective while holding uncertainty-aware capacity feasibility fixed.
\end{itemize}

Each plan is evaluated on the same 50 scenarios with the joint model's battery and fleet recourse evaluators. This evaluation is in-sample, since RP is optimized on the same set, so it isolates design mechanisms rather than testing generalization. Because the sequential baselines keep RP's scenario-wise throughput constraints, their differences from RP isolate the value of feeding downstream recourse costs back into location decisions. EV sizes capacity on the mean scenario and therefore fails the capacity requirement of the training scenarios themselves, so we report how often its capacity is breached rather than a remedy-dependent cost; its contrast with RP combines capacity protection and distributional recourse effects. Sequential--naive is a deliberately limited reference, and the comparison with Sequential--informed is more informative.

\begin{table}[!htbp]
\centering
\caption{Value of integrated stochastic planning on SE-5 ($|\Omega|=50$). Costs in \$K (15-year net present value; lower is better); all optimization runs reached a certified gap below 0.01\%.}
\label{tab:value-integration}
\small
\setlength{\tabcolsep}{4pt}
\begin{tabular}{l rrrrr r rr}
\toprule
& \multicolumn{5}{c}{Cost components} & & \multicolumn{2}{c}{Capacity outcomes} \\
\cmidrule(lr){2-6} \cmidrule(lr){8-9}
Plan & Infra. & Capacity & Battery & Fleet & Diesel & \makecell{Component sum,\\shortfall excluded} & \makecell{Installed\\(MW-yr)} & \makecell{Scenarios\\affected} \\
\midrule
Stochastic (RP) & 53{,}130.5 & 49{,}598.8 & 15{,}132.8 & 1{,}508.6 & 548.6 & 119{,}919.3 & 1{,}471.4 & 0\% \\
Mean value (EV) & 52{,}465.6 & 31{,}239.0 & 11{,}104.8 & 2{,}004.5 & 11{,}082.8 & 107{,}896.6$^\dagger$ & 991.8 & 62\% \\
Sequential--informed       & 39{,}242.5 & 52{,}526.9 & 36{,}079.9 & 76{,}043.9 & 368.7 & 204{,}261.9 & 1{,}454.8 & 0\% \\
Sequential--naive          & 33{,}512.1 & 54{,}994.4 & 143{,}848.1 & 18{,}058.2 & 323.9 & 250{,}736.7 & 1{,}456.1 & 0\% \\
\bottomrule
\end{tabular}

\par\smallskip
\begin{minipage}{\textwidth}\footnotesize
$^\dagger$Not a feasible stochastic objective value: EV breaches its installed charging capacity in 62\% of the evaluation scenarios, while capacity restoration is excluded from the component sum. Because the scenario-wise capacity requirement admits no recourse, EV is infeasible for the stochastic program and the classical value of the stochastic solution (VSS) is unbounded. Two tests adapted from \citet{maggioni2012analyzing}, solved by LcBD+ in the same model, measure EV's quality instead: keeping EV's stations and capacities as lower bounds gives the expected input value $\mathrm{EIV}=121{,}036.5$, i.e., a loss of upgrading the deterministic solution $\mathrm{LUDS}=\mathrm{EIV}-\mathrm{RP}=1{,}117.2$ (0.9\% of RP); allowing only the stations and BSS-to-CCS assignments that EV uses gives 145{,}421.3, i.e., a loss using the skeleton solution $\mathrm{LUSS}=25{,}502.1$ (21.3\%).\par\smallskip

Perfect-information benchmark: averaging the 50 scenario-wise solutions of the same hard-capacity model gives the wait-and-see value $\widehat{\mathrm{WS}} = 97{,}279.9$ and the estimated expected value of perfect information $\widehat{\mathrm{EVPI}} = \mathrm{RP} - \widehat{\mathrm{WS}} = 22{,}639.4$ (18.9\% of RP).
\end{minipage}
\end{table}

Table~\ref{tab:value-integration} separates economic performance from feasibility. EV's 2040 siting pattern is close to RP's (45 of its 46 swapping hubs are shared with RP's 48), but it installs 33\% less charging capacity. Its component sum is 10\% below RP only because capacity restoration is excluded: the plan breaches installed capacity in 62\% of evaluation scenarios and also incurs about 20 times RP's diesel-substitution cost. The tests in the table note, solved within the same model, show where the deficiency lies. Keeping EV's stations and capacities and allowing additions yields a plan within 0.9\% of RP: about 98\% of EV's station-years recur in RP, and the upgrade mainly adds 507\,MW-yr of capacity, about half of EV's, plus a few stations. Restricting the plan to the stations and assignments used by EV, in contrast, costs 21.3\% more than RP, almost entirely through diesel substitution in 2038--2040, when up to 3.3\% of expected demand is no longer electrified. The binding locations are grid-constrained hubs at which EV charges on site because mean loads fit the local connection. In 2039, EV's configuration requires up to 12.0\,MW across the training scenarios at a hub with an 8.1\,MW connection, and 19.1\,MW at the hub with the longest assignment discussed in Section~\ref{sec:planning-insights}, whose connection is 16.2\,MW. RP converts both into swap-only hubs replenished at grid-rich hubs 0.2 and 2.8 hours away. The skeleton cannot, so the corridors through these hubs remain on diesel in every scenario, because electrification is a first-stage decision. Mean-value planning thus selects nearly the right network, but it sizes capacity for the mean and misses where local grid access becomes binding; additional capacity cannot repair the latter where the connection itself is too small.

The sequential comparison isolates a different mechanism because both staged plans satisfy every scenario-wise throughput constraint and therefore have no capacity shortfall. Their component sums nevertheless exceed RP by 70\% (informed) and 109\% (naive). Without downstream-cost feedback, the naive variant selects inexpensive locations whose replenishment assignments raise expected battery cost to $9.5\times$ RP's (143{,}848.1 versus 15{,}132.8); fleet cost is also $12.0\times$ RP's. The demand-weighted battery surrogate repairs much of the inventory failure (battery cost falls to $2.4\times$ RP's) but shifts the loss to truck dwell: fleet cost rises to $50.4\times$ RP's (76{,}043.9 versus 1{,}508.6). Thus, even with capacity feasibility held fixed, a one-directional battery proxy does not capture the two-way coupling among location, pooling, and dwell.

\subsection{Planning Insights for Long-Haul Truck Energy Infrastructure}
\label{sec:planning-insights}

The SE-5 solution reveals how freight demand, charging access, and grid availability jointly shape the infrastructure network. We describe four features of the 50-scenario solution and then their robustness.

\paragraph{A hybrid network of hub roles.}
Figure~\ref{fig:deployment-map-m5} compares the 2040 service geography with the transmission network. Among 111 candidate hubs, the solution deploys 20 co-located BSS--CCS hubs, 25 charging-only hubs, and 28 swap-only hubs; the remaining 38 have no station. By 2040, 194 of 249 paths meet the expected-active threshold, while installed charging interfaces range from 0.6 to 32.0 MW. The coexistence of all three station roles indicates that freight-service geography and charging/grid geography do not fully coincide. Co-location is attractive where service demand and grid access align, whereas specialized hubs preserve coverage where only one function is economical.
The deployment findings are conditional on the fixed-demand convention of Section~\ref{sec:network}. Over 2026--2040, corridors with $w_{pt}=0$ carry at most 0.02\% of expected potential demand in RP and in both sequential plans of Section~\ref{sec:joint-vs-sequential}, and corridors with $0<w_{pt}<1$ at most 0.001\%; the diesel-served share accounts for at most 0.03\% of the battery demand at deployed BSSs and less than 0.06\% of total cost, so the convention has a negligible effect on the reported plans and comparisons.

\begin{figure}[!htbp]
\centering
\includegraphics[width=0.9\textwidth]{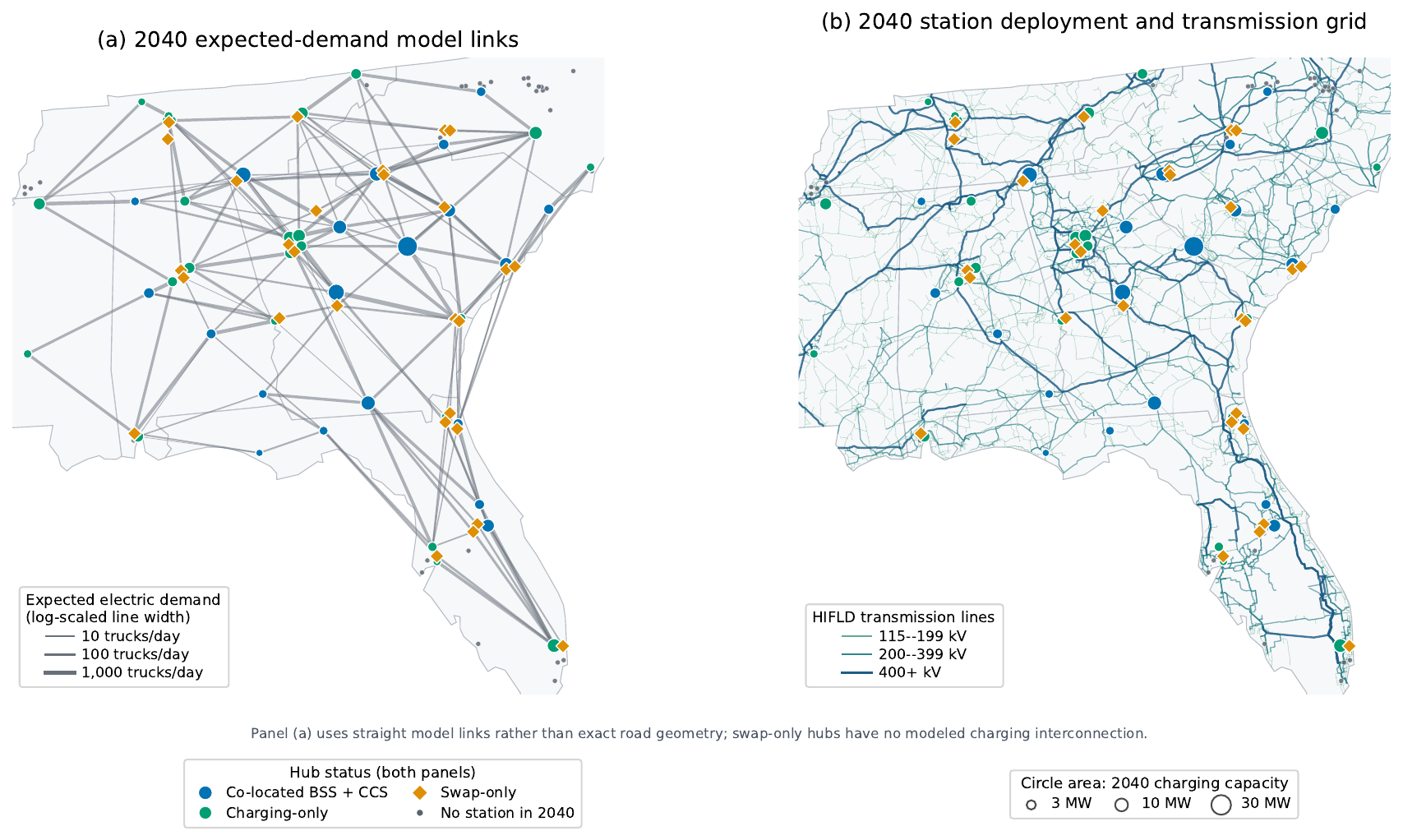}
\caption{The 2040 hybrid station network. (a) Paths with at least five expected electric trucks per day, aggregated onto straight model links that represent connectivity rather than road geometry. (b) Station deployment on HIFLD transmission lines. Circle area denotes installed charging capacity. A swap-only hub has no modeled charging interconnection, which does not imply that it lacks a real-world grid connection. Map lines delineate study areas and do not necessarily depict accepted national boundaries.}
\label{fig:deployment-map-m5}
\end{figure}

\paragraph{The network expands first in footprint and then in capacity.}
Figure~\ref{fig:network-scaling-m5} separates the evolution of station roles from the charging-capacity commitment. In 2026, nine hubs have stations (four charging-only, four swap-only, and one co-located), with 1.8 MW of installed charging capacity. The station count grows most rapidly in the early years and increases only from 66 to 73 after 2035, whereas installed capacity more than triples over the same period, from 100.1 to 334.9 MW. The planning emphasis within the optimal horizon-committed plan therefore shifts from geographic coverage toward capacity intensification as electric-truck demand matures. Because investments are not scenario-adaptive, this is a property of the committed trajectory rather than a rule for revising investments as adoption is observed.

\begin{figure}[!htbp]
\centering
\includegraphics[width=0.85\textwidth]{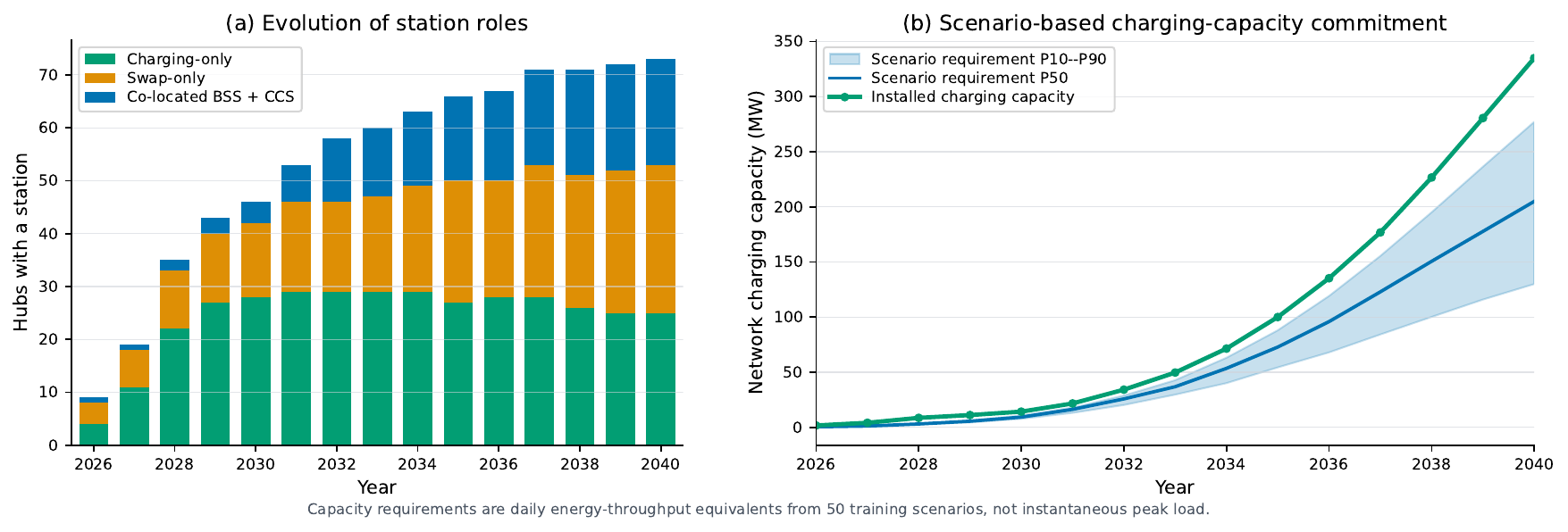}
\caption{Network footprint and charging-capacity evolution. (a) Hubs by station role. (b) Installed charging capacity against the median and P10--P90 range of the charging requirement over the 50 training scenarios; MW values are daily energy-throughput equivalents, not instantaneous peak loads. Early excess capacity partly reflects the 350 kW base capacity of each CCS.}
\label{fig:network-scaling-m5}
\end{figure}

\paragraph{Battery replenishment remains local but spatially flexible.}
Figure~\ref{fig:replenishment-network-m5} examines the 48 BSS-to-CCS assignments in 2040. Twenty assignments are co-located and 28 connect separate hubs. The 48 swapping hubs draw on 32 distinct charging hubs, so 13 of the 45 charging hubs serve on-site truck charging only and replenish no swapping stock. Replenishment pools stay small, with a mean of 1.5 and at most 3 swapping hubs per charging hub, so cross-hub variance pooling covers 68.3\% of expected battery demand but operates over a handful of streams rather than over a large aggregate. Weighted by expected battery demand, 64.4\% of replenishment is co-located and 89.2\% lies within one hour of one-way driving; the longest assignment, 2.8 hours, stays below the 3.6-hour limit. Spatial separation is therefore available without becoming the dominant operating pattern. It can nevertheless matter at selected locations: at the hub of the longest assignment, the maximum 2040 charging requirement across the training scenarios is 23.0 MW against a 16.9 MW grid connection, so remote replenishment preserves swapping service where local charging is constrained.

\begin{figure}[!htbp]
\centering
\includegraphics[width=0.85\textwidth]{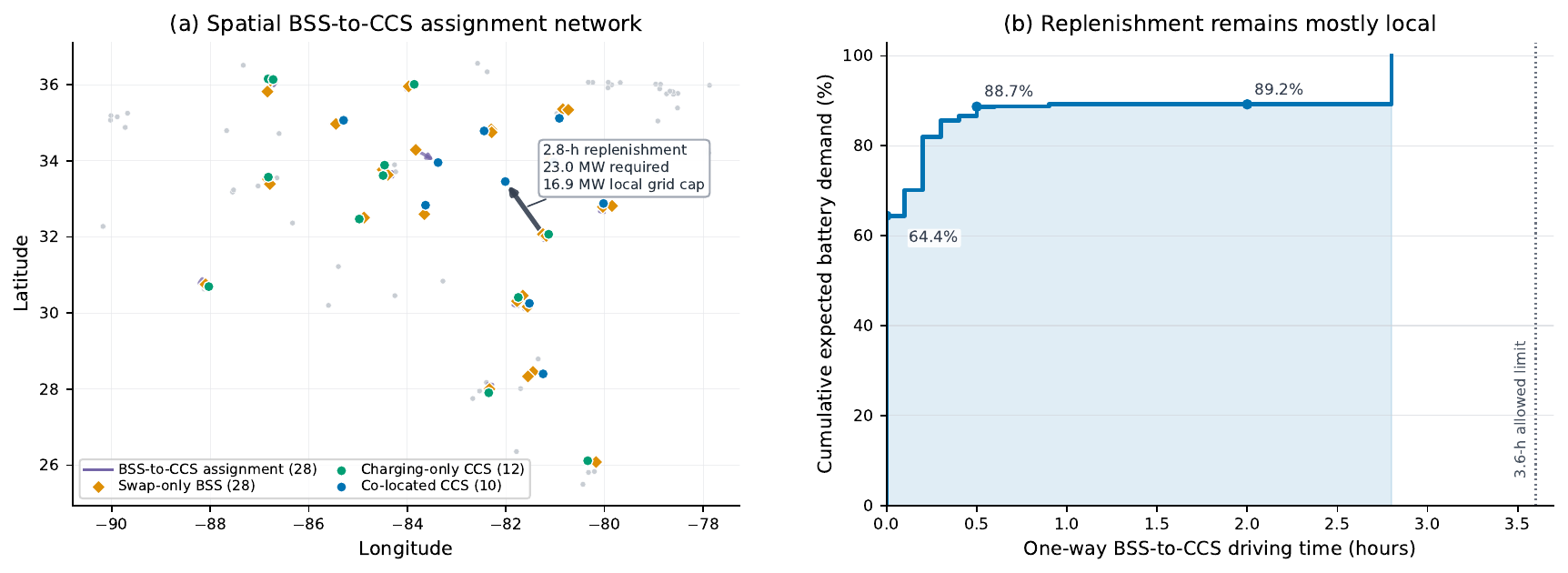}
\caption{Battery replenishment assignments in 2040. (a) Arrows connect each non-co-located BSS to its assigned CCS, with width log-scaled by expected battery demand. Markers show only the hubs of the 28 remote assignments: 28 swapping hubs and the 22 charging hubs that receive them (12 charging-only, 10 also hosting a BSS); all other hubs are grey. (b) Demand-weighted cumulative distribution of the one-way BSS-to-CCS driving time $T^{t}_{hh'}$ of the optimized assignments, excluding the charging time $T^c$, with the 3.6-hour feasibility limit. Map lines delineate study areas and do not necessarily depict accepted national boundaries.}
\label{fig:replenishment-network-m5}
\end{figure}

\paragraph{When separating swapping from charging pays off.}
To isolate the value of separation, we re-solve SE-5 with each swapping hub restricted to a charging station at the same hub ($I_{h'}=\{h'\}$), keeping all other settings; the run is certified optimal within 0.01\%. Total cost rises by 14.5\%, from 119{,}919.3 to 137{,}254.8. Two effects drive the increase. First, 27 of the 28 swap-only hubs of the main plan must install their own charging, so the 2040 network needs 72 charging hubs instead of 45 and infrastructure cost rises by 13{,}694.1. Second, swapping is withdrawn at eight hubs, including the grid-constrained hub served remotely in the main plan, and 2.2\% of 2040 demand reverts to diesel, adding 14{,}214.4. Separation, in turn, costs battery inventory and shuttle transport: the co-location-only plan saves 8{,}965.2 in battery cost and 2{,}746.7 in charging capacity. Separation therefore pays off where swapping demand is high relative to local grid access, or where a nearby grid-rich hub can absorb charging that would otherwise be duplicated, and its value falls as shuttle transport becomes more expensive (Online Supplement~S4).
\paragraph{Robustness.} Online Supplement~S4 re-solves SE-5 under heavier energy use, alternative shuttle costs and diesel premiums, spatially heterogeneous adoption, a tighter service level and irreversible investment; the deployment sequence and the predominance of local replenishment persist in every case. Heavier energy use, 1.2 and 1.5\,kWh per mile, raises total cost by 49\% and 93\% and shifts the mix toward co-location, and the swap load at the hub of the longest assignment then outgrows every grid connection within reach, so the reach of centralized charging is bounded by the largest single connection within the assignment radius. Redistributing adoption across states raises installed 2040 capacity by 6\% to 16\%, and the main plan breaches capacity in 14\% to 30\% of these scenarios, so its capacity protection is specific to the training generator. A 99\% service level raises total cost by 1.4\%, and irreversible investment, which rules out the four charging and eleven swapping station closures of the reported plan, by 1.7\%, both with an almost unchanged footprint.

\section{Conclusion}

This paper studies an SLCC network for long-haul truck electrification, with a multi-period, two-stage stochastic model that coordinates station deployment, BSS-to-CCS assignments, and charging capacity with battery inventory, replenishment, and incremental fleet requirements under uncertain demand.

Building on the submodularity-based framework of \citet{qi2023scaling}, we give a scalar proof that the battery value function is submodular and embed the resulting hub--period block Lov\'asz cuts, together with exact fleet cuts, in a Benders decomposition accelerated by a convex oracle with certified error bounds and memoization. The method reaches gaps below $0.01\%$ on all five instances and solves the largest in about four minutes. The numerical study shows that mean-value planning provides a useful siting baseline but underestimates capacity needs and misses the assignments required at grid-constrained hubs, that sequential siting reduces infrastructure costs at the expense of higher battery and fleet costs, that the optimized network keeps swapping near freight demand while using remote charging selectively at grid-suitable hubs, and that its committed deployment establishes geographic coverage before expanding charging capacity.

The findings have several limits. Infrastructure is committed over the horizon rather than adapted as adoption becomes observable, and the capacity protection is specific to the training scenario generator. The fixed-demand sizing convention has a negligible cost effect in the reported plans, but an endogenous served-flow formulation would require renewed value-function and cut analysis. The pooled stock meets the service target under day-to-day rate variability except at thin remote hubs, but not at remote hubs if variability also accrues within the day.

Further extensions include multistage investment, joint sizing of the two buffers of remote links, integer batch quantities, corridor-level adoption heterogeneity, endogenous route choice, multiple charging suppliers per swapping hub, and shortfall-recourse or chance-constrained capacity models; extensions that change served battery demand or charging durations require the validity of the present cuts to be re-established.

\medskip
\noindent\textbf{Data availability statement.} The freight-flow data are from the public Freight Analysis Framework \citep{bts2022faf5}. The code, generated scenario sets, and result files supporting this study are available from the corresponding author upon reasonable request.

\medskip
\noindent\textbf{Declaration of competing interest.} The authors report that there are no competing interests to declare.

\medskip
\noindent\textbf{Funding.} This research did not receive any specific grant from funding agencies in the public, commercial, or not-for-profit sectors.

\medskip
\noindent\textbf{Declaration of generative AI and AI-assisted technologies in the manuscript preparation process.} During the preparation of this work the authors used generative AI tools (Anthropic's Claude models) to help with language improvement and consistency checks. After using these tools, the authors reviewed and edited the content as needed and take full responsibility for the content of the published article.

\medskip
\noindent\textbf{CRediT authorship contribution statement.} \textbf{Yujia Xu:} Conceptualization, Methodology, Investigation, Writing -- original draft. \textbf{Walid Klibi:} Conceptualization, Methodology, Supervision, Writing -- review \& editing. \textbf{Benoit Montreuil:} Conceptualization, Methodology, Supervision, Writing -- review \& editing. All authors approved the final version and agree to be accountable for all aspects of the work.

\appendix
\counterwithin{figure}{section}
\counterwithin{table}{section}
\counterwithin{equation}{section}
\section{Proofs}
\label{app:proofs}
\subsection{Proof of Lemma~\ref{lem:tau-min}}
\label{app:proof-lemma}

\begin{proof}[Proof of Lemma~\ref{lem:tau-min}]
Write $m_S(\tau):=\sum_{h\in S}\chi_h(\tau)+\psi(\tau)$, so that $F(S)=\inf_{\tau>0}m_S(\tau)$ and, for $h\notin S$, $m_{S\cup\{h\}}(\tau)=m_S(\tau)+\chi_h(\tau)$. Fix $S\subseteq T\subseteq I$ and $h\in I\setminus T$; we show the equivalent inequality
\[
F(S\cup\{h\})+F(T)\;\ge\;F(S)+F(T\cup\{h\}).
\]
Let $\varepsilon>0$ and pick $\tau_a,\tau_b>0$ with
$m_{S\cup\{h\}}(\tau_a)\le F(S\cup\{h\})+\varepsilon$ and
$m_{T}(\tau_b)\le F(T)+\varepsilon$.

\emph{Case 1: $\tau_a\ge\tau_b$.} Evaluate the two right-hand-side terms at $\tau_a$ and $\tau_b$ respectively:
\[
F(S)\;\le\;m_S(\tau_a)\;=\;m_{S\cup\{h\}}(\tau_a)-\chi_h(\tau_a),
\qquad
F(T\cup\{h\})\;\le\;m_{T}(\tau_b)+\chi_h(\tau_b).
\]
Adding these and using $\chi_h(\tau_b)\le\chi_h(\tau_a)$ (monotonicity, $\tau_b\le\tau_a$),
\[
F(S)+F(T\cup\{h\})
\;\le\;
m_{S\cup\{h\}}(\tau_a)+m_{T}(\tau_b)
\;\le\;
F(S\cup\{h\})+F(T)+2\varepsilon.
\]

\emph{Case 2: $\tau_a<\tau_b$.} Evaluate the right-hand side at the swapped points, using $m_S = m_T-\sum_{h'\in T\setminus S}\chi_{h'}$ and $m_{T\cup\{h\}}=m_{S\cup\{h\}}+\sum_{h'\in T\setminus S}\chi_{h'}$:
\[
F(S)\;\le\;m_{S}(\tau_b)\;=\;m_{T}(\tau_b)-\!\!\sum_{h'\in T\setminus S}\!\!\chi_{h'}(\tau_b),
\qquad
F(T\cup\{h\})\;\le\;m_{S\cup\{h\}}(\tau_a)+\!\!\sum_{h'\in T\setminus S}\!\!\chi_{h'}(\tau_a).
\]
Adding these and using $\chi_{h'}(\tau_a)\le\chi_{h'}(\tau_b)$ for every $h'\in T\setminus S$ (monotonicity, $\tau_a<\tau_b$) gives the same bound. Letting $\varepsilon\downarrow0$ completes the proof.
\end{proof}

\subsection{Normalization and Monotonicity of the Battery Value Function}
\label{app:proof-monotone}
Clearly, $f_{h't}^\omega(\emptyset)=0$. To verify monotonicity, take $S\subseteq T$ and any batch vector $Q_T\in\mathcal Q(T)$. Its restriction $Q_T|_S$ lies in $\mathcal Q(S)$. Each added hub $h\in T\setminus S$ has nonnegative itemwise contribution $a_h+c_h(Q_h)$: the term $-1+Q_h$ is nonnegative because $Q_h\ge1$, and all remaining expected-inventory, transportation, and travel-dispersion terms are nonnegative. Removing those hubs also cannot increase the pooled standard-deviation term. Hence the objective at $Q_T$ for $T$ is at least the objective at $Q_T|_S$ for $S$, which is at least $f_{h't}^\omega(S)$; minimizing over $Q_T$ gives $f_{h't}^\omega(T)\ge f_{h't}^\omega(S)$. If $\mu_{ht}(\omega)=\sigma_{ht}(\omega)=0$, then $\overline Q_{hh't}(\omega)=1$, so $Q_h=1$ and the added contribution is exactly zero. Consequently, every exact chain marginal is nonnegative.

\subsection{Proof of Proposition~\ref{prop:cut-valid}}
\label{app:proof-cutvalid}
\begin{proof}[Proof of Proposition~\ref{prop:cut-valid}]
Fix $\omega$ and a master-feasible binary $(x^s,z)$, and set $\theta^B_{\omega h't}=f_{h't}^\omega\bigl(S_{h't}(z)\bigr)$ for every block and $\theta^B_\omega=\sum_{t,h'}\theta^B_{\omega h't}$, which by~\eqref{eq:recourse-decomp} is the battery recourse of $\omega$; each $\theta^B_{\omega h't}\ge0$ by Appendix~\ref{app:proof-monotone}. Proposition~\ref{prop:lovasz} with $S=S_{h't}(z)$ gives $L_{h't,\pi}^\omega(z_{h't})\le\theta^B_{\omega h't}$ for every block and permutation, so every block cut~\eqref{eq:battery-cut} holds, and summing over the blocks gives every aggregated cut~\eqref{eq:Lovasz-global}. For dominance, summing the block cuts satisfied by any master values for the permutations $\pi_{h't}$ and applying the linking inequality gives $\theta^B_\omega\ge L_k^\omega(z)$. The coefficients are chain marginals of $f_{h't}^\omega$, which depends only on fixed scenario data and its argument set, so the validity of a cut does not depend on the iteration at which it was generated.
\end{proof}

\subsection{Proof of Proposition~\ref{prop:fleet-affine}}
\label{app:fleet-verification}
\begin{proof}[Proof of Proposition~\ref{prop:fleet-affine}]
Fix $\omega$, $t$, $p\in\mathcal P_t(\omega)$, and a binary $x^s$, and write $\lambda_{pt}:=\alpha_{pt}(\omega)+\Phi^{-1}(1-\varepsilon^C)\beta_{pt}(\omega)\ge0$. By~\eqref{eq:dwell-excess} and~\eqref{eq:fleet-closed},
\[
G^V_{t,p}(x^s;\omega)=\delta_tM^v_t\,n^v_{t,p}(\omega)=\frac{\delta_tM^v_t}{O_c}\,\lambda_{pt}\sum_{h\in\mathcal H_p}\max\bigl\{0,\,T^sx^s_{ht}+T^{c,\mathrm{fast}}(1-x^s_{ht})R_{ph}(\omega)-T^r_{hp}\bigr\}.
\]
For binary $x^s_{ht}$, each maximum equals
\[
x^s_{ht}\bigl(\max\{T^r_{hp},T^s\}-T^r_{hp}\bigr)+(1-x^s_{ht})\bigl(\max\{T^r_{hp},T^{c,\mathrm{fast}}R_{ph}(\omega)\}-T^r_{hp}\bigr),
\]
which is affine in $x^s_{ht}$. Summing over $p\in\mathcal P_t(\omega)$ and $t\in\mathcal T$, and collecting the coefficient of each $x^s_{ht}$ over the paths $p\in P(h)\cap\mathcal P_t(\omega)$, gives $G^V(x^s;\omega)=G^V(\mathbf 0;\omega)+\sum_{h,t}d^\omega_{ht}x^s_{ht}$ with $d^\omega_{ht}$ as in~\eqref{eq:fleet-gradient}. Finally, at a binary $x^s$ the right-hand side of~\eqref{eq:fleet-cut} equals $G^V(\mathbf 0;\omega)+\sum_{h,t}d^\omega_{ht}\bar x^s_{ht}+\sum_{h,t}d^\omega_{ht}(x^s_{ht}-\bar x^s_{ht})=G^V(x^s;\omega)$.
\end{proof}

\section{Validation of the Battery-Stock Approximation}
\label{app:mc-validation}
\label{app:pool-sim}
Online Supplement~S2 validates the batching-variance expression~\eqref{eq:VarQX-scenario} for a single demand stream by simulation. Within the batch sizes the model admits, the expression is exact on low-flow corridors, tracks the simulation on mid-flow corridors, and overestimates the variance on high-flow corridors, where the batch capacity binds first; beyond the small-batch bound it would understate the variance by up to 23\%, so $\gamma=1$ is the largest round value that keeps every corridor class on the conservative side.
To assess the composed system of Section~\ref{sec:capacity}, we simulate every charging hub that replenishes at least one swapping hub in the 2040 SE-5 plan (32 pools) in five scenarios, at the 10th to 90th percentiles of 2040 pool demand, which gives 227 hub--scenario cases with positive demand (100 co-located, 127 remote). Each pool holds $\lceil\Psi_{t,h'}\rceil$ packs at integer batch sizes, divided between the swapping shelves and the charging hub as in the three lines of~\eqref{eq:Battery2}; a second allocation keeps, with the same pool total, one additional truck request on each swapping shelf, because a request of two or three packs can overshoot the reorder point. Each scenario-active relay path carries one truck stream that reaches every pool hub on the path at its driving-time offset, and a control run gives each path--hub pair its own stream. In the base arrival model, which follows the interpretation of~\eqref{eq:theta-normal} as day-to-day variability, the daily rate is random and trucks arrive as a Poisson process within the day; a stress test lets the variance-rate approximation hold within the day as well. The cycle service level is the share of replenishment cycles at a swapping hub that end without a backordered request, the per-cycle, per-location measure targeted by~\eqref{eq:Battery-service}; the pack fill rate is the share of packs supplied immediately. Online Supplement~S5 gives the full protocol.

\begin{table}[!htbp]
\centering
\caption{Simulated service of the 2040 SE-5 battery pools (32 pools, five scenarios, 227 hub--scenario cases; target cycle service 95\%). Service levels and shares of battery demand are weighted by battery demand; ``meeting 95\%'' counts hub--scenario cases.}
\label{tab:pool-service-main}
\footnotesize
\setlength{\tabcolsep}{4pt}
\begin{tabular}{l rr rr}
\toprule
Arrival model & \multicolumn{2}{c}{Day-to-day variability} & \multicolumn{2}{c}{Stress test (renewal)} \\
\cmidrule(lr){2-3}\cmidrule(lr){4-5}
Shelf allocation & Derived & With overshoot & Derived & With overshoot \\
\midrule
Cycle service, all hubs & 99.4\% & 99.7\% & 92.2\% & 93.1\% \\
\quad co-located hubs & 100.0\% & 100.0\% & 97.0\% & 96.9\% \\
\quad remote hubs & 98.2\% & 99.2\% & 83.4\% & 86.4\% \\
Pack fill rate & 99.8\% & 99.9\% & 94.8\% & 95.2\% \\
\midrule
Co-located hubs meeting 95\% (100 cases) & 100\% & 100\% & 91\% & 90\% \\
Remote hubs meeting 95\% (127 cases) & 60\% & 81\% & 0\% & 7\% \\
\quad above 150 packs per day (73 cases) & 90\% & 100\% & 0\% & 0\% \\
\quad 50 to 150 packs per day (28 cases) & 36\% & 89\% & 0\% & 14\% \\
\quad below 50 packs per day (26 cases) & 0\% & 19\% & 0\% & 19\% \\
Battery demand at hubs below 95\% & 3.4\% & 0.7\% & 39.2\% & 39.6\% \\
Remote requests dispatched at once by the CCS & 98.5\% & 97.8\% & 89.5\% & 88.3\% \\
\midrule
Common minus independent streams, cycle service (mean / worst hub) & \multicolumn{2}{c}{$<$0.1 / $-$1.6 points} & \multicolumn{2}{c}{$<$0.1 / $-$0.9 points} \\
\bottomrule
\end{tabular}
\end{table}

Table~\ref{tab:pool-service-main} reports the results. Shared-path dependence is present but small. In the SE-5 plan, 15.6\% of the pool--period--scenario combinations with at least two assigned hubs and positive demand contain a shared path. By the covariance expression of Online Supplement~S7, the covariances raise the pooled demand standard deviation by 0.4\% on demand-weighted average and by at most 3.0\%, within the 3.7\% bound derived there, and common streams change cycle service by less than 0.1 percentage points on average and by at most 1.6 points at any hub.

Under day-to-day variability, the pooled stock meets the target in aggregate, with a demand-weighted cycle service of 99.4\% and a pack fill rate of 99.8\%, and at every co-located hub. The margin reflects the variance-rate approximation, because day-to-day rate variability materializes only partly within a few hours. Shortfalls occur at thin remote hubs, where a request of two or three packs can overshoot the reorder point and, on paths with fewer than nine trucks per day, the Poisson count variance exceeds $\beta^2_{pt}$; these hubs carry 3.4\% of battery demand, and 0.7\% once one truck request is kept on each shelf.

The stress test separates the two buffers of~\eqref{eq:Battery2}. Co-located hubs, whose service depends on the pooled buffer alone, reach 97\%, but remote hubs reach only 83\% to 86\% at every volume level, because a cycle is stockout-free only if both the pooled buffer and the local return-leg buffer suffice, and each is sized at the 95\% quantile. The inequality of Section~\ref{sec:capacity} therefore does not carry over to the per-hub service event when variability accrues within the day. Sizing the two buffers of remote links jointly, for example each at $\Phi^{-1}(1-\varepsilon^C/2)$, would address this case, but it changes~\eqref{eq:Battery2} and every reported plan, and is left for future work.

\bibliography{mybibliography}

\clearpage
\counterwithout{figure}{section}
\counterwithout{table}{section}
\counterwithout{equation}{section}
\setcounter{section}{0}
\setcounter{figure}{0}
\setcounter{table}{0}
\setcounter{equation}{0}
\renewcommand{\thesection}{S\arabic{section}}
\renewcommand{\thetable}{S\arabic{table}}
\renewcommand{\thefigure}{S\arabic{figure}}
\renewcommand{\theequation}{S\arabic{equation}}
\renewcommand{\theHsection}{S\arabic{section}}
\renewcommand{\theHtable}{S\arabic{table}}
\renewcommand{\theHfigure}{S\arabic{figure}}
\renewcommand{\theHequation}{S\arabic{equation}}
\setlength{\parindent}{0pt}
\setlength{\parskip}{0.4em}
\pdfbookmark[0]{Online Supplement}{online-supplement}

\begin{center}
{\large Online Supplement to}\\[0.5em]
{\Large\bfseries Designing battery swapping and charging networks\\[0.2em]
for electric freight transport under demand uncertainty}\\[1em]
Yujia Xu, Walid Klibi, and Benoit Montreuil
\end{center}
\vspace{0.5em}

\noindent This supplement contains the full parameter table of the computational study (Section~S1), the derivation of the batching variance and its Monte Carlo validation for a single demand stream (Section~S2), the per-replication results of the scenario-fan stability test (Section~S3), the robustness experiments (Section~S4), the service simulation of the deployed battery pools (Section~S5), the extensive form without the exact reformulations (Section~S6), and the derivations behind the shared-path covariance and the fleet-sizing constraints (Section~S7). Numbers with the prefix S refer to this supplement; all other section, equation, remark, table, and figure numbers refer to the main paper.

\section{Model parameters}
\label{app:parameters}

Table~\ref{Tab:experiment-parameters} lists the parameters of the computational experiments. Section~6.1.4 of the paper summarizes their sources, and the derivations follow the table.

\begin{table}[!htbp]
\centering
\footnotesize
\renewcommand{\arraystretch}{1.00}
\setlength{\tabcolsep}{4pt}
\caption{Summary of model parameters for computational experiments. Derivations follow the table.}
\label{Tab:experiment-parameters}
\begin{tabularx}{\textwidth}{@{}>{\raggedright\arraybackslash}X r l >{\raggedright\arraybackslash}p{5.2cm}@{}}
\toprule
Parameter & Value & Unit & Source \\
\midrule
\multicolumn{4}{@{}l}{\textbf{Electric Trucks}} \\
\quad Purchase cost (without battery) & 216,000 & \$ & \cite{hunter2021spatial,ICCT2022} \\
\quad Salvage period & 7 & years & \cite{danielis2018probabilistic} \\
\quad Annualized capital cost & 30.9 & \$K/yr & Derived \\
\quad Driver wages + benefits & 38 & \$/hr & \cite{atri2025operational} \\
\quad Driver cost per operated truck & 205.2 & \$K/yr & Derived ($18\,$h $\times$ 300 days) \\
\quad Total cost per provisioned truck ($M^v_t$) & 236.1 & \$K/yr & Derived \\
\quad Battery range per pack & 120 & miles & Assumed; favorable duty cycle (see text) \\
\quad Max battery packs per truck & 6 & packs & \cite{MANBattery} \\
\midrule
\multicolumn{4}{@{}l}{\textbf{Battery Packs}} \\
\quad Capacity per pack & 89 & kWh & \cite{ABBMega, MANBattery} \\
\quad Pack lease rate (NIO BaaS, 100\,kWh) & 1,128 & RMB/mo & \cite{cnevpost2025baas} \\
\quad Annual lease per pack ($M^b_t$) & 1.70 & \$K/yr & Derived (89\,kWh share) \\
\quad Annual cost decline & 8\% & /yr & \cite{bnef2025battery} (2025 survey) \\
\quad Swap time ($T^s$) & 5 & min & \cite{catl2025qiji} \\
\quad Replenishment charge time ($T^c$) & 120 & min/pack & \cite{qi2023scaling}; 89\,kWh at $\sim$45\,kW \\
\quad On-site fast-charge time ($T^{c,\text{fast}}$) & 15 & min/pack & 350\,kW-class high-power CCS \citep{wattev2025mcs} \\
\midrule
\multicolumn{4}{@{}l}{\textbf{Charging Station (CCS)}} \\
\quad Installation cost (DC Fast, 5 chargers) & 515 & \$K & \cite{icf2022charging} \\
\quad Annualized fixed cost ($A^c$) & 73.6 & \$K/yr & Derived (7yr) \\
\quad Annual maintenance ($M^c_t$) & 25.8 & \$K/yr & 5\% of capital \\
\quad Initialization cost ($U^c_t$) & 51.6 & \$K & 10\% of capital \\
\quad Charging capacity cost ($c^{\mathrm{kW}}$) & 68 & \$/kW-yr & Derived from \cite{nicholas2019estimating} \\
\quad Base-site capacity ($\bar C_0$) & 350 & kW & 5$\times$70\,kW per \cite{icf2022charging} \\
\quad Max average utilization ($\rho$) & 0.8 & & Assumed \\
\midrule
\multicolumn{4}{@{}l}{\textbf{Grid power for charging ($\bar P_{ht}$, base year)}} \\
\quad Node $\ge$500\,kV & 40 & MW & Assumed, after \cite{then2025planning} \\
\quad Node $\ge$230\,kV & 20 & MW & Assumed, after \cite{then2025planning} \\
\quad Node $\ge$161\,kV & 15 & MW & Assumed, after \cite{then2025planning} \\
\quad Node $\ge$115\,kV & 10 & MW & \cite{borlaug2021heavy,then2025planning} \\
\quad Node $\ge$69\,kV & 5 & MW & \cite{borlaug2021heavy} \\
\quad Node $<$69\,kV & 2.5 & MW & \cite{then2025planning} \\
\quad Annual growth (T\&D expansion) & 3.8\% & /yr & \cite{doe2023needs} median \\
\midrule
\multicolumn{4}{@{}l}{\textbf{Swapping Station (BSS)}} \\
\quad Installation cost (heavy-duty, modular) & 250 & \$K & \cite{catl2025qiji} \\
\quad Annualized fixed cost ($A^s$) & 35.7 & \$K/yr & Derived (7yr) \\
\quad Annual maintenance ($M^s_t$) & 12.5 & \$K/yr & 5\% of capital \\
\quad Initialization cost ($U^s_t$) & 25.0 & \$K & 10\% of capital \\
\midrule
\multicolumn{4}{@{}l}{\textbf{Operational}} \\
\quad Working days per year ($Y$) & 300 & days & Industry standard \\
\quad Daily operating fraction ($O_c$) & 0.75 & 18 h/day & Industry standard \\
\quad Max transshipment batch size ($\bar{Q}$) & 19 & packs & 10-t truck / 523\,kg pack \\
\quad Battery transport vehicle cost ($\eta_t$) & 0.4 & \$K/day & Medium-duty shuttle, vehicle only, $\approx$\$0.30/mile \\
\quad Loading/unloading at origin--destination hubs ($T^r_{\text{depot}}$) & 60 & min & Assumed \\
\quad Operational floor at intermediate hubs ($T^r_{\text{mid}}$) & 5 & min & Assumed \\
\midrule
\multicolumn{4}{@{}l}{\textbf{Planning}} \\
\quad Diesel-substitution premium ($c^{\mathrm{D}}$) & 250 & \$/truck-day & Derived (see text) \\
\quad Corridor activation threshold ($d_{\min}$) & 5 & trucks/day & Assumed \\
\quad Service level ($1-\varepsilon^C$) & 95\% & & Standard \\
\quad Discount rate & 5\% & /yr & Standard \\
\quad Max BSS-to-CCS driving time & 3.6 & hrs & Network design \\
\quad Intra-facility handling time, co-located pairs ($T^t_{hh}$) & 7.2 & min & Assumed \\
\quad Planning horizon & 2026--2040 & & 15 yearly periods \\
\bottomrule
\end{tabularx}
\end{table}

\paragraph{Derivation of cost and operating parameters.}
\emph{Charging capacity and stations.} The capacity price follows from the ICCT charging-cost decomposition \citep{nicholas2019estimating}: a 350\,kW-class unit costs about \$166K installed at multi-charger sites (\$140K hardware plus \$26K asymptotic per-charger installation), i.e., \$474/kW, or \$68 per kW-year over the seven-year depreciation period. The residual site-level cost in the \$515K station of \citet{icf2022charging} (medium-voltage interconnection, transformers, and site work) is carried by the fixed cost $A^c$, with the included capacity credited through $\bar C_0$.
\emph{Grid budgets.} Each hub's base-year power budget increases with the transmission voltage of its node (Table~\ref{Tab:experiment-parameters}). The magnitudes follow distribution-level studies: new loads above about 5\,MW typically require feeder upgrades and loads above about 12\,MW a dedicated feeder \citep{borlaug2021heavy}, and in a heavy-vehicle charging case an existing network supplies about 2.5\,MW, a dedicated feeder about 10\,MW, and a dedicated substation about three times as much \citep{then2025planning}. Budgets grow at 3.8\% per year, the compounded rate implied by the U.S.\ Department of Energy's National Transmission Needs Study's median finding of 57\% transmission-infrastructure growth by 2035 relative to the early-2020s system \citep{doe2023needs}.

\emph{Trucks and batteries.} The per-truck cost $M^v_t$ combines annualized vehicle capital (\$30.9K) with driver wages and benefits (\$38/hr over $18\,\text{h}\times300$ days, slightly below the \$38.90--40.03/hr reported for 2023--2024 by the American Transportation Research Institute \citep{atri2025operational}). The battery holding cost $M^b_t$ is calibrated to the market rate for battery-as-a-service (BaaS) leasing rather than to depreciation: NIO's BaaS program rents its 100\,kWh swappable pack for RMB~1,128 per month \citep{cnevpost2025baas}, or about \$1.9K per pack-year, which scales to \$1.70K for the 89\,kWh pack used here; the lease rate is assumed to decline at 8\% per year, the pack-price trajectory in BloombergNEF's 2025 survey \citep{bnef2025battery}.

\emph{Reference pack and battery logistics.} The reference pack is a standardized modeling unit rather than a single commercial product: the module size and six-module layout follow MAN-class architectures (534\,kWh installed, comparable to the 513\,kWh full configuration of CATL's QIJI system), while the swap time follows QIJI's chassis-level exchange. Returned swap batteries recharge at about 45\,kW average power. At CATL's module-to-bracket energy density of 170\,Wh/kg \citep{cnevpost2022mtb}, an 89\,kWh pack weighs about 523\,kg, so a 10-ton shuttle carries $\bar Q=10{,}000/523\approx19$ packs per load. The shuttle cost $\eta_t=\$0.4$K per driving-day corresponds to about \$0.30 per mile at 55\,mph; it covers the vehicle operating cost of an in-house electric shuttle but not a driver. Co-located pairs exchange packs by intra-facility handling covered by station O\&M. Under the just-in-time policy, at most three packs are exchanged at any stop across all instances, so the six-pack capacity never binds, and even at heavier energy intensities (about 60 miles per pack) the 534\,kWh vehicle covers every leg in the network, whose longest is 302 miles.

\emph{Diesel-substitution premium.} Under the reference pack's energy use of 0.74\,kWh per mile, diesel fuel at \$0.48 per mile \citep{atri2025operational} against electricity at the 2024 U.S.\ commercial average of 12.8 cents per kWh \citep{eia2025electricity}, with 95\% charging efficiency, gives an energy-cost differential of about \$0.38 per mile, and battery-electric trucks save a further \$0.02--0.06 per mile in maintenance \citep{theicct2023tco}. Over typical long-haul daily distances of 400--500 miles, the operating differential is therefore about \$160--220 per truck-day. Electric operation also avoids about 0.5--0.6\,t of CO$_2$ per truck-day net of grid emissions (10.18\,kg of CO$_2$ per gallon of diesel at 6.6--7 miles per gallon, against 0.35\,t per MWh on the U.S.\ grid; \citealp{epa2022ghg,theicct2023tco,epa2025egrid}), which at \$185 per tonne \citep{rennert2022comprehensive} adds about \$90--120. The value $c^{\mathrm{D}}=\$250$ lies between the operating differential alone (\$160--220) and its sum with the avoided emissions (\$250--340); Section~S4 re-solves SE-5 at \$125 and \$500, which bracket this range.
\emph{Path activation.} On the full network ($|\mathcal P|=249$), the union of the scenario-wise active path sets contains 6 paths in 2026, 62 in 2030 and 215 in 2040.

\section{Batching variance: derivation and simulation protocol}
\label{app:mc-protocol}

\paragraph{Derivation of the batching variance.}
This paragraph derives approximation~(7) of the paper. Let $N_{ht}(u;\omega)$ be the cumulative number of swap requests at $h$ up to time $u$, a stationary counting process calibrated to the one-day demand moments. Propagating the variance linearly to a general interval is a variance-rate approximation, rather than an exact identity for an arbitrary renewal process. The demand over the most recent effective charging interval is represented by
\begin{equation}
Z_{hh't}(\omega) := N_{ht}(u;\omega) - N_{ht}\bigl(u-\Delta_{hh'};\omega\bigr),
\quad
\mathbb E[Z_{hh't}(\omega)] = \mu_{ht}(\omega)\,\Delta_{hh'},
\quad
\mathrm{Var}\bigl(Z_{hh't}(\omega)\bigr) \approx \sigma_{ht}^2(\omega)\,\Delta_{hh'}.
\label{eq:Z-def}
\end{equation}
Let $X_{hh't}(\omega)$ be the number of full batches shipped from $h$ to $h'$ after time $u-\Delta_{hh'}$, so that the \emph{batched deficit} $Q_{hh't}(\omega)X_{hh't}(\omega)$ counts the packs removed from the stock at $h$ and tied up in full batches in transit or charging. For integer batch sizes, it differs from the window count only through the incomplete-batch remainders $\mathrm{rem}_{hh't}:=\operatorname{mod}_{Q_{hh't}(\omega)}(N_{ht}(\cdot;\omega))$ at the two endpoints:
\begin{equation}
Q_{hh't}(\omega)X_{hh't}(\omega)
\;=\;
Z_{hh't}(\omega) + \mathrm{rem}_{hh't}(u-\Delta_{hh'}) - \mathrm{rem}_{hh't}(u).
\label{eq:Y-def}
\end{equation}
In steady state each endpoint remainder is approximately uniform on $\{0,\ldots,Q_{hh't}(\omega)-1\}$, with variance $(Q^2_{hh't}(\omega)-1)/12$. Treating the two remainders as independent of each other and of $Z_{hh't}(\omega)$, as in \citet{qi2023scaling}, and allowing $Q$ to be continuous gives the smooth planning approximation
\[
  \mathrm{Var}\bigl(Q_{hh't}(\omega)X_{hh't}(\omega)\bigr)
    \;\approx\; \mathrm{Var}\bigl(Z_{hh't}(\omega)\bigr) + \frac{Q_{hh't}^2(\omega)-1}{6}
    \;\approx\; \Delta_{hh'}\,\sigma_{ht}^2(\omega) + \frac{Q_{hh't}^2(\omega)-1}{6},
\]
which is~(7) of the paper; its first term reflects arrival variability over the interval and its second reflects batching. If $z_{hh't}=1$ and $\gamma\mu_{ht}(\omega)\Delta_{hh'}<1$, the small-batch bound~(6) of the paper fixes $Q_{hh't}(\omega)=1$. The remainder is then identically zero, so the batching-variance term vanishes and the remainder approximation is not needed; the variance-rate approximation of the window demand still applies.

\paragraph{Simulation protocol.}
This section describes the simulation protocol behind the calibration of the small-batch parameter $\gamma=1$ in (6); the results follow it. The protocol follows the validation experiment of \citet[Sec.~4.2]{qi2023scaling}, with the parameters changed to our regime. For each cell, $0.5$ million swap arrivals are generated from a renewal process whose inter-arrival times follow a normal distribution truncated at zero with count-process dispersion $\sigma^2/\mu = 1.175$ (Qi et al.'s value, slightly over-dispersed relative to Poisson; the truncation lowers the effective window mean, by about a quarter at the nominal values used here, so all formulas are evaluated at the empirical process moments, which are the values shown in Figure~\ref{fig:mc-validation}). A sliding window of length $\Delta$ is rolled over the arrival epochs in steady state, recording the window count $Z(t)$ and the batched deficit $QX(t) = Q\left(\lfloor N(t)/Q\rfloor - \lfloor N(t-\Delta)/Q\rfloor\right)$, where $N(t)$ is the cumulative arrival counter; reorders fire whenever $N(t)$ crosses a multiple of $Q$. The simulated variance of $QX$ is compared against the small-batch segment $\Delta\sigma^2 + (Q^2-1)/6$ of the piecewise variance in \citet[Lemma~2]{qi2023scaling}, which we use in (7), and against the full piecewise $\phi(Q)$ of \citet{qi2023scaling}, whose two segments cross at $\nu = 3\Delta\mu - \sqrt{3(\Delta\mu)^2 - 6\Delta\sigma^2 + 1}$. Relative to Qi et al.'s single setting ($\mu\Delta = 20$, $\Delta = 2.5$\,h), we set $\Delta = 2.2$\,h (the $T^c = 120$\,min replenishment pace plus $\sim$10\,min transport) and sweep corridor flows $\mathbb{E}[Z] \in \{2, 5, 10, 30, 60\}$ with batch ratios $Q/\mathbb{E}[Z]$ from $0.3$ to $2$, bracketing the candidate $\gamma$ values. Day-to-day rate variability (the $\mathrm{CV}=1/3$ law for $\beta$) is deliberately \emph{not} injected into the simulated arrival process: it enters the model through the separate demand-variance term $\Delta\sigma^2$, whereas this experiment isolates the batching remainder.

\begin{figure}[!htbp]
\centering
\includegraphics[width=\textwidth]{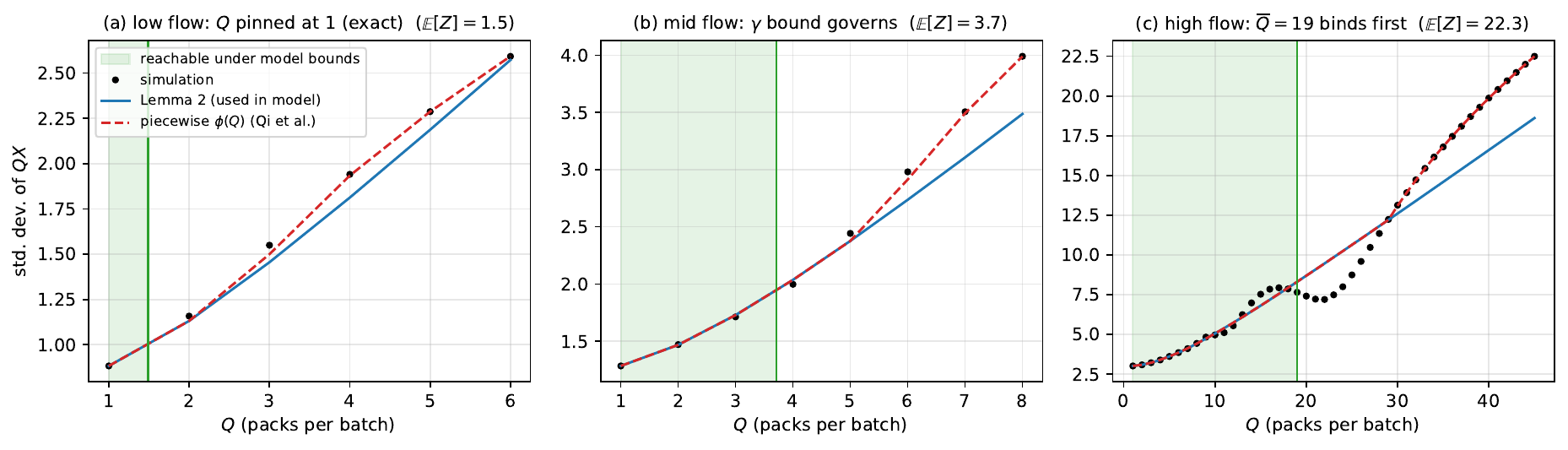}
\caption{Simulated versus approximated standard deviation of the batched deficit on three corridor classes. The shaded band is the batch-size region reachable under the model's constraints, $Q \le \min\{\overline{Q}, \max\{1, \gamma\,\mathbb{E}[Z]\}\}$.}
\label{fig:mc-validation}
\end{figure}

\paragraph{Results.} Figure~\ref{fig:mc-validation} shows that a different mechanism confines each corridor class to the region where~(7) is accurate or conservative. On low-flow corridors (panel a), essentially only unit batches are reachable, for which the expression is exact. On mid-flow corridors (panel b), the small-batch bound binds: within the reachable band the expression tracks the simulation closely, whereas beyond $Q=\gamma\,\mathbb E[Z]$ it would understate the variance by up to 23\%. On high-flow corridors (panel c), the batch capacity $\overline Q=19$ binds first, and the remaining deviations overestimate the variance, which inflates the safety stock. The choice $\gamma=1$ is thus the largest round value that keeps every corridor class on the conservative side. At $Q/\mathbb E[Z]=1.5$ the understatement on mid-flow corridors ranges from 5\% to 23\%, and on high-flow corridors the overestimate reaches 71\%. The choice $\gamma=1$ also lies below the crossing point $\nu/\mathbb E[Z]\approx1.4$--1.5 of the two segments of the piecewise variance of \citet{qi2023scaling}.
\section{Scenario-fan stability of the SAA approximation}
\label{app:saa-stability}

Table~\ref{tab:saa-stability} reports the in-sample and out-of-sample stability test underlying the choice of $|\Omega|=50$ scenarios (Section~6.1.5). For each fan size $N \in \{5, 10, 30, 50, 80, 100\}$ and each of five independent LHS replications, the model is solved on SE-3 and the resulting first-stage plan is re-evaluated, with all first-stage decisions fixed, on an independent reference set of $1{,}000$ LHS scenarios. The out-of-sample evaluation applies the same cost components used in training, augmented with a remediation rule that the training model excludes by construction: when the realized charging load in an evaluation scenario exceeds the installed capacity (constraint~(13) is enforced as a hard constraint in training), the excess energy is assumed to revert to diesel operation and is priced at the same substitution premium $c^{\mathrm{D}}$ used in the objective, converted at $E^{\text{pack}}$ and two packs per truck-stop, which approximates the average requirement of 2.2 packs across path--hub stops with positive replenishment. The capacity-exceedance column of Table~\ref{tab:saa-stability} reports, for each replication, the frequency of evaluation scenarios in which this rule is invoked; the discussion after the table relates them to the $1/(N+1)$ level of a fixed scalar load.

\begin{table}[!htbp]
\centering
\caption{In-sample and out-of-sample stability of the SAA approximation (SE-3, $\gamma=1$, $c^{\mathrm{D}}=\$250$; five independent LHS replications per fan size; evaluation on an independent reference set of $1{,}000$ scenarios). In-sample stability: $(\max-\min)/\text{mean}$ of the five in-sample objective values. Out-of-sample stability: mean over replications of $|\text{out-of-sample} - \text{in-sample}|/\text{out-of-sample}$. Capacity exceedance: fraction of evaluation scenarios in which the realized charging load exceeds installed capacity at some hub--period. Costs in \$K (15-year NPV).}
\label{tab:saa-stability}
\small
\setlength{\tabcolsep}{4pt}
\begin{tabular}{cc rrr cc}
\toprule
Scenarios & Run & \makecell{In-sample\\objective value} & \makecell{Out-of-sample\\expected cost} & \makecell{Capacity\\exceedance (\%)} & \makecell{In-sample stability\\{[}(Max$-$Min)/Avg{]} (\%)} & \makecell{Out-of-sample stability\\{[}mean rel.\ diff.{]} (\%)} \\
\midrule
\multirow{5}{*}{5}   & 1 & 54{,}190 & 82{,}721 & 17.9 & \multirow{5}{*}{14.46} & \multirow{5}{*}{26.69} \\
                     & 2 & 56{,}495 & 69{,}109 & 13.6 & & \\
                     & 3 & 57{,}709 & 66{,}045 & 6.1 & & \\
                     & 4 & 51{,}258 & 123{,}480 & 36.9 & & \\
                     & 5 & 59{,}327 & 65{,}624 & 9.0 & & \\
\midrule
\multirow{5}{*}{10}  & 1 & 57{,}089 & 66{,}800 & 6.5 & \multirow{5}{*}{14.11} & \multirow{5}{*}{15.74} \\
                     & 2 & 57{,}439 & 65{,}093 & 8.5 & & \\
                     & 3 & 55{,}067 & 75{,}026 & 13.8 & & \\
                     & 4 & 55{,}329 & 73{,}233 & 19.5 & & \\
                     & 5 & 63{,}195 & 64{,}065 & 3.7 & & \\
\midrule
\multirow{5}{*}{30}  & 1 & 58{,}072 & 63{,}839 & 3.5 & \multirow{5}{*}{6.43} & \multirow{5}{*}{7.96} \\
                     & 2 & 57{,}729 & 64{,}086 & 4.4 & & \\
                     & 3 & 58{,}386 & 63{,}431 & 3.4 & & \\
                     & 4 & 61{,}504 & 63{,}526 & 2.5 & & \\
                     & 5 & 57{,}851 & 64{,}067 & 5.3 & & \\
\midrule
\multirow{5}{*}{50}  & 1 & 60{,}789 & 63{,}803 & 2.5 & \multirow{5}{*}{7.64} & \multirow{5}{*}{3.49} \\
                     & 2 & 62{,}331 & 63{,}816 & 2.7 & & \\
                     & 3 & 58{,}829 & 63{,}878 & 4.3 & & \\
                     & 4 & 63{,}550 & 64{,}258 & 1.0 & & \\
                     & 5 & 63{,}346 & 64{,}231 & 0.9 & & \\
\midrule
\multirow{5}{*}{80}  & 1 & 61{,}666 & 63{,}516 & 1.9 & \multirow{5}{*}{11.61} & \multirow{5}{*}{1.38} \\
                     & 2 & 61{,}934 & 63{,}659 & 2.1 & & \\
                     & 3 & 63{,}940 & 64{,}576 & 1.3 & & \\
                     & 4 & 67{,}521 & 67{,}582 & 0.6 & & \\
                     & 5 & 69{,}193 & 69{,}052 & 0.0 & & \\
\midrule
\multirow{5}{*}{100} & 1 & 65{,}723 & 65{,}919 & 0.7 & \multirow{5}{*}{14.68} & \multirow{5}{*}{1.01} \\
                     & 2 & 62{,}844 & 63{,}826 & 0.8 & & \\
                     & 3 & 62{,}127 & 63{,}623 & 1.6 & & \\
                     & 4 & 64{,}314 & 64{,}820 & 0.8 & & \\
                     & 5 & 71{,}717 & 71{,}644 & 0.3 & & \\
\bottomrule
\end{tabular}
\end{table}

\emph{Settings and replication ranges.} Each replication is solved by LcBD+ to a 0.5\% tolerance within a one-hour limit, and all 30 runs close to a certified gap of at most 0.47\%. At $N=50$, the capacity-exceedance frequency ranges from 0.9\% to 4.3\% across the five replications and the out-of-sample difference from 1.1\% to 7.9\%; without the remediation charge the difference is at most 0.14\%. Five replications per fan size support a sensitivity assessment rather than a precise statistical certification.
\emph{Protection level.} Because capacity is scenario-feasible, $N$ affects both the expected-cost approximation and the range of training loads that installed capacity must cover. A fixed scalar load sized to the maximum of $N$ independent continuous observations has an expected fresh-sample exceedance frequency of $1/(N+1)$. That order-statistic identity is not a formal guarantee for the present design, which is optimized using dependent LHS samples and contains integer decisions; the convex scenario-program results of \citet{campi2008exact} are not invoked to certify it, and the protection level is therefore assessed through the independent evaluation set. Dependence between hub loads can cause exceedances to occur together. Sample maxima also drive the in-sample variability at large $N$, but bounded demand distributions do not imply that such maxima fail to concentrate as $N$ grows.

The out-of-sample differences of Section~6.1.5 are empirical policy-evaluation discrepancies, not optimality-gap estimates: the lower-bound property of the mean SAA optimum \citep{mak1999monte,kleywegt2002sample} requires the same expected-cost problem in training and evaluation, whereas training enforces the throughput constraint in every training scenario and evaluation adds a remediation charge. The independent $1{,}000$-scenario evaluation of a fixed plan is an unbiased estimate of that plan's cost under the evaluation rule. Each in-sample value is a certified upper bound within 0.47\% of its SAA optimum, so using it in place of the optimum changes each discrepancy by at most 0.47 percentage points. In-sample instability persists for large fans because capacity is sized against the most demanding training scenarios: one $N=100$ replication draws an unusually heavy tail and prices a plan roughly 13\% more expensive in-sample, while its out-of-sample difference remains at 0.1\%; the plan is more expensive, not less accurate. Increasing $N$ beyond 50 therefore mainly tightens the protection level: the five $N=80$ plans disperse more on the common reference set (out-of-sample range 63.5--69.1) than the five $N=50$ plans (63.8--64.3), whose deployed 2040 configurations essentially coincide. Finally, hub loads are nondecreasing in both scenario parameters (differentiating (39) shows that $\varphi_t$ is nondecreasing in the adoption parameter for every $t$) and both parameters have truncated support, so a fully support-robust capacity variant obtains by enforcing (13) at the single corner scenario of the support; we retain the sample-based construction, whose protection level is explicit and tunable through $N$.

\section{Robustness to energy intensity, shuttle cost, diesel premium, spatial adoption, service level, and irreversibility}
\label{app:energy-robustness}

The reference pack corresponds to about 0.74\,kWh per mile. To test the planning insights against heavier duty cycles, SE-5 is re-solved at 1.2 and 1.5\,kWh per mile, which reduce the range of the 89\,kWh pack to 74 and 59 miles. All other parameters, the 50 training scenarios, and the solution protocol are those of the main study, so only the per-stop pack requirement $R_{ph}$ changes; the mean requirement over stops with positive replenishment rises from 2.2 to 3.3 and 3.9 packs, and the six-pack capacity is reached but never exceeded, so every leg remains feasible. Both instances are solved to a gap below $0.01\%$. Table~\ref{tab:energy-robustness} compares the plans.

\begin{table}[!htbp]
\centering
\caption{SE-5 plans under alternative energy intensities ($|\Omega|=50$; costs in \$K, 15-year NPV; all runs end with a gap below $0.01\%$).}
\label{tab:energy-robustness}
\small
\begin{tabular}{l rrr}
\toprule
Energy intensity (kWh/mile) & 0.74 (main) & 1.2 & 1.5 \\
Range per pack (miles) & 120 & 74 & 59 \\
\midrule
Total cost & 119{,}919.3 & 178{,}187.3 & 230{,}846.3 \\
\quad Infrastructure & 53{,}130.5 & 53{,}492.2 & 56{,}307.3 \\
\quad Charging capacity & 49{,}598.8 & 75{,}434.1 & 90{,}707.9 \\
\quad Battery & 15{,}132.8 & 27{,}274.7 & 37{,}222.4 \\
\quad Fleet & 1{,}508.6 & 1{,}347.2 & 3{,}908.8 \\
\quad Diesel substitution & 548.6 & 20{,}639.0 & 42{,}700.0 \\
\midrule
Station hubs in 2026 / 2035 / 2040 & 9 / 66 / 73 & 9 / 66 / 73 & 9 / 67 / 75 \\
Installed capacity in 2035 / 2040 (MW) & 100.1 / 334.9 & 146.3 / 473.2 & 175.2 / 560.8 \\
2040 co-located / charging-only / swap-only & 20 / 25 / 28 & 24 / 20 / 29 & 34 / 14 / 27 \\
2040 electrified share of active demand & 100.0\% & 95.4\% & 94.7\% \\
Replenishment within one hour & 89.2\% & 93.5\% & 94.4\% \\
Longest assignment (hours) & 2.8 & 2.6 & 2.6 \\
Overlap with main plan, 2040 BSS / CCS sites & --- & 0.84 / 0.82 & 0.73 / 0.63 \\
\bottomrule
\end{tabular}
\end{table}

Three features of the main solution are unchanged. The deployment sequence is essentially identical: the station footprint follows the same trajectory, and installed capacity grows by a factor of about 3.2 to 3.3 between 2035 and 2040 in every case, so coverage still precedes capacity intensification. Replenishment remains predominantly local and becomes slightly more so, with the share completed within one hour rising from 89.2\% to 94.4\%. All three station roles persist.

Two features change. First, the station mix shifts toward co-location. Because on-site charging dwell scales with the number of packs a truck takes, heavier energy use raises the fleet cost of charging trucks on site, and the model responds by adding swapping stations at hubs that previously offered charging only; co-located hubs rise from 20 to 34 and charging-only hubs fall from 25 to 14. These new swapping stations are mostly replenished on site, since the shuttle and in-transit inventory costs of remote replenishment grow with battery throughput while co-located replenishment avoids both; the number of remote assignments stays between 27 and 29.

Second, part of the 2040 demand reverts to diesel, and this effect is local rather than system-wide. Installed capacity uses only 17\% to 21\% of the network's aggregate grid budget, and at most two charging hubs reach their individual budgets. All corridors that are not fully electrified pass through a single high-volume hub on a 115\,kV node with a 16.9\,MW budget in 2040. In the main plan, this hub's maximum swap load across the training scenarios, 23.0\,MW, exceeds the local budget but fits within a remote 230\,kV hub 2.8 hours away, which is the remote assignment discussed in Section~6.4. At 1.2 and 1.5\,kWh per mile, the load grows to 33.9 and 42.4\,MW and exceeds the headroom of every single charging hub within the assignment radius, the largest of which is 33.7\,MW. Because each swapping hub is replenished by exactly one charging hub, the load cannot be divided, so the model charges trucks on site up to the local budget and leaves the remaining demand on diesel. The reach of centralized charging is therefore bounded by the largest grid connection within the assignment radius rather than by aggregate grid capacity.

Table~\ref{tab:shuttle-pi-robustness} re-solves SE-5 under alternative shuttle costs and diesel premiums, again with all other parameters, scenarios, and the solution protocol of the main study.

\begin{table}[!htbp]
\centering
\caption{SE-5 plans under alternative shuttle costs $\eta$ (\$K per driving-day) and diesel premiums $c^{\mathrm{D}}$ (\$ per truck-day); the main plan uses $\eta=0.4$ and $c^{\mathrm{D}}=250$ ($|\Omega|=50$; costs in \$K, 15-year NPV; all runs end with a gap below $0.01\%$).}
\label{tab:shuttle-pi-robustness}
\footnotesize
\setlength{\tabcolsep}{4pt}
\begin{tabular}{l rrrrr}
\toprule
 & & \multicolumn{2}{c}{Shuttle cost $\eta$} & \multicolumn{2}{c}{Diesel premium $c^{\mathrm{D}}$} \\
\cmidrule(lr){3-4}\cmidrule(lr){5-6}
 & Main & 1.31 & 2.40 & 125 & 500 \\
\midrule
Total cost & 119{,}919.3 & 130{,}952.3 & 135{,}289.7 & 119{,}595.5 & 120{,}274.2 \\
\quad Infrastructure & 53{,}130.5 & 61{,}248.2 & 64{,}642.8 & 52{,}897.0 & 53{,}494.7 \\
\quad Charging capacity & 49{,}598.8 & 47{,}852.3 & 46{,}977.5 & 49{,}531.5 & 49{,}637.3 \\
\quad Battery & 15{,}132.8 & 15{,}074.7 & 6{,}781.0 & 15{,}097.7 & 15{,}188.9 \\
\quad Fleet & 1{,}508.6 & 1{,}896.9 & 2{,}382.5 & 1{,}582.3 & 1{,}485.7 \\
\quad Diesel substitution & 548.6 & 4{,}880.3 & 14{,}505.9 & 487.1 & 467.7 \\
\midrule
Station hubs in 2026 / 2035 / 2040 & 9 / 66 / 73 & 9 / 66 / 73 & 9 / 65 / 73 & 9 / 65 / 72 & 11 / 66 / 73 \\
Installed capacity in 2040 (MW) & 334.9 & 334.9 & 328.7 & 334.6 & 334.9 \\
2040 co-located / charging-only / swap-only & 20 / 25 / 28 & 33 / 25 / 15 & 39 / 28 / 6 & 20 / 25 / 27 & 20 / 25 / 28 \\
Remote assignments in 2040 & 28 & 15 & 6 & 27 & 28 \\
Co-located share of battery demand & 64.4\% & 83.6\% & 99.6\% & 64.4\% & 64.4\% \\
Longest assignment (hours) & 2.8 & 2.8 & 2.3 & 2.8 & 2.8 \\
2040 electrified share of active demand & 100.0\% & 100.0\% & 97.8\% & 100.0\% & 100.0\% \\
Overlap with main plan, 2040 BSS / CCS & --- & 0.88 / 0.69 & 0.86 / 0.62 & 0.98 / 1.00 & 1.00 / 1.00 \\
\bottomrule
\end{tabular}
\end{table}

\emph{Shuttle cost.} The reference shuttle cost $\eta=\$0.4$K per driving-day covers the vehicle's operating cost only. Adding a dedicated driver at \$38/hr raises it to \$1.31K (about \$0.99 per mile), and the third-party rate of \citet{qi2023scaling} corresponds to \$2.40K. Total cost rises by 9.2\% and 12.8\%. The footprint and capacity trajectory are unchanged, but replenishment shifts toward co-location: remote assignments fall from 28 to 15 and 6, co-located hubs rise from 20 to 33 and 39, and swap-only hubs fall from 28 to 15 and 6. With a dedicated driver, the 2.8-hour remote assignment of Section~6.4 is retained; at the third-party rate it is abandoned, the hub charges trucks on site up to its local grid budget, and 2.2\% of 2040 demand reverts to diesel, the same mechanism as under heavier energy use.

\emph{Diesel premium.} Halving or doubling $c^{\mathrm{D}}$ changes total cost by $-0.3\%$ and $+0.3\%$ and leaves the plan essentially unchanged: the 2040 charging sites coincide with the main plan, the swapping sites differ by at most one hub, and diesel substitution stays below 0.5\% of total cost. Within this range the premium affects the timing of early deployment rather than the design.

\emph{Spatially heterogeneous adoption.} The main scenarios vary the speed of electrification through two common drivers, so adoption is spatially uniform within a scenario. To test whether the findings depend on this, we keep each scenario's growth rate $g_\omega$ and adoption parameter $q_\omega$ and redistribute adoption across the seven states. State $s$ receives the parameter $q_\omega m_{s\omega}$, clipped to $[0.03,0.90]$, with the mean-preserving multiplier $m_{s\omega}=e^{\epsilon_{s\omega}}/\sum_{s'}\pi_{s'}e^{\epsilon_{s'\omega}}$, where $\epsilon_{s\omega}\sim\mathcal N(0,\sigma^2)$ is truncated at $\pm2\sigma$ and drawn by an independent Latin hypercube, and $\pi_s$ is the state's share of base-year freight (each origin--destination flow split equally between its two end states). Each state follows the logistic curve of Section~6.1.2, an origin--destination pair uses the mean of its two end states' curves, and each scenario--year is rescaled so that network-wide electric demand equals that of the main scenario set; only the spatial allocation changes. For $\sigma=0.25$ and $0.5$, the state multipliers range from 0.53 to 1.77 and from 0.26 to 2.80. Table~\ref{tab:spatial-adoption} reports the re-solved plans; its last row evaluates the main plan, with all first-stage decisions fixed, on the heterogeneous scenarios.

\begin{table}[!htbp]
\centering
\caption{SE-5 plans under spatially heterogeneous adoption ($|\Omega|=50$; network-wide electric demand of every scenario--year equal to the main scenario set; costs in \$K, 15-year NPV; all runs end with a gap below $0.01\%$).}
\label{tab:spatial-adoption}
\small
\begin{tabular}{l rrr}
\toprule
State dispersion $\sigma$ & 0 (main) & 0.25 & 0.50 \\
\midrule
Total cost & 119{,}919.3 & 122{,}709.5 & 128{,}205.4 \\
\quad Infrastructure & 53{,}130.5 & 52{,}969.1 & 52{,}727.6 \\
\quad Charging capacity & 49{,}598.8 & 52{,}046.0 & 55{,}975.2 \\
\quad Battery & 15{,}132.8 & 15{,}314.3 & 17{,}505.0 \\
\quad Fleet & 1{,}508.6 & 1{,}444.8 & 1{,}378.7 \\
\quad Diesel substitution & 548.6 & 935.3 & 618.9 \\
\midrule
Station hubs in 2026 / 2035 / 2040 & 9 / 66 / 73 & 9 / 66 / 73 & 9 / 66 / 73 \\
Installed capacity in 2035 / 2040 (MW) & 100.1 / 334.9 & 102.3 / 356.3 & 106.1 / 388.4 \\
2040 co-located / charging-only / swap-only & 20 / 25 / 28 & 20 / 24 / 29 & 20 / 23 / 30 \\
Remote assignments in 2040 & 28 & 29 & 30 \\
Co-located share of battery demand & 64.4\% & 64.4\% & 62.4\% \\
Replenishment within one hour & 89.2\% & 89.2\% & 89.4\% \\
Longest assignment (hours) & 2.8 & 2.8 & 3.1 \\
Overlap with main plan, 2040 BSS / CCS & --- & 0.98 / 0.98 & 0.96 / 0.91 \\
\midrule
Main plan: scenarios with a capacity breach & 0\% & 14\% & 30\% \\
\bottomrule
\end{tabular}
\end{table}

The planning insights are unchanged: the station footprint follows the same trajectory, all three station roles persist, and replenishment remains predominantly local. Spatial dispersion raises the capacity requirement, because each hub must cover its own largest scenario load although the network total is unchanged: installed 2040 capacity rises by 6\% and 16\%, and total cost by 2.3\% and 6.9\%. Dispersion also strengthens the separation of swapping from charging: swap-only hubs and remote assignments increase from 28 to 30, and the longest assignment from 2.8 to 3.1 hours, as charging moves away from hubs whose local loads become more variable. The main plan, sized for spatially uniform adoption, breaches installed capacity in 14\% and 30\% of the heterogeneous scenarios. Its capacity protection therefore holds with respect to the scenario generator used in training, and spatially differentiated adoption should be represented in the training scenarios when it is expected.

\emph{Service level and irreversible investment.} Re-solving SE-5 at $1-\varepsilon^C=0.99$ (gap below $0.01\%$) gives a total cost of 121{,}641.9 (+1.4\%), with battery cost +11.4\% and fleet cost +7.4\%; the 2040 footprint keeps 73 station hubs, the BSS and CCS sets overlap by 98\%, and one charging-only hub becomes swap-only. Repricing the reported plan at 0.99 separates the two roles of the quantile: battery cost, which depends only on the cycle-service target, rises by 10.0\%, and fleet cost, which depends only on the offered-load margin, by 14.7\%; the repriced plan is within 0.02\% of the re-optimized one. The reported plan closes four charging and eleven swapping stations before 2040 and reduces capacity at eight hub--periods, which the annualized costs of leased or contracted equipment permit. Imposing $x^c_{ht}\ge x^c_{h,t-1}$, $x^s_{ht}\ge x^s_{h,t-1}$ and $C_{ht}\ge C_{h,t-1}$ (gap below $0.01\%$) raises total cost to 121{,}917.3 (+1.7\%), mainly through charging capacity (+1{,}405) and infrastructure (+358). The station footprint remains 9/66/73 in 2026/2035/2040; the 2040 co-located/charging-only/swap-only mix becomes 23/21/29 instead of 20/25/28, installed 2040 capacity 350.0 instead of 334.9\,MW, and the co-located share of battery demand 69.4\% instead of 64.4\%. The coverage-then-capacity sequence and the predominance of local replenishment are therefore unchanged under irreversibility.

\section{Service validation of the deployed battery pools}
\label{app:pool-service}

Section~\ref{app:mc-protocol} validates the batching remainder of a single demand stream, and Appendix~B of the paper summarizes the simulation detailed here. This section simulates the composed system of Section~3.2.1 for the deployed network: all 32 charging hubs that replenish at least one swapping hub in the 2040 SE-5 plan, in the five scenarios at the 10th, 30th, 50th, 70th and 90th percentiles of 2040 pool demand, which gives 227 hub--scenario cases with positive demand. In this plan, 20 charging hubs serve a single swapping hub, 8 serve two, and 4 serve three.

\emph{Stock and policy.} Each pool holds $\lceil\Psi_{t,h'}\rceil$ packs, with~(8) evaluated at the integer batch sizes $Q_h=\allowbreak\max\{1,\allowbreak\mathrm{round}(Q^*_h)\}$, where $Q^*$ solves the battery subproblem. Following the decomposition of~(8), each remote swapping hub keeps on its shelf the expected return-leg pipeline, the unpooled buffer and the batching allowance, $\lceil\mu_hT^t_{hh'}+\Phi^{-1}(1-\varepsilon^C)\sqrt{T^t_{hh'}}\,\sigma_h+Q_h-1\rceil$, but at least one truck's largest request $\max_pR_{ph}$, and the charging hub holds the remainder, i.e., the outbound and charging pipeline and the pooled buffer. A co-located swapping hub draws directly on the charging stock of its site. A swap takes the truck's $R_{ph}$ packs from the shelf and leaves the same number of depleted packs; whenever $Q_h$ depleted packs have accumulated, a batch travels to the charging hub, each pack charges for $T^c$, and the charging hub returns $Q_h$ charged packs as soon as it holds them, first come, first served. Requests that find an empty shelf are backordered. A second allocation keeps, with the same pool total, one additional truck request ($\max_pR_{ph}-1$ packs) on each swapping shelf, because a request of two or three packs can overshoot the reorder point.

\emph{Arrivals.} Each scenario-active relay path that requires packs at a pool hub carries one truck stream, which reaches every pool hub on the path at its cumulative driving-time offset (common stream); a control run gives each path--hub pair its own stream (independent streams). The base arrival model follows the interpretation of~(1) as day-to-day variability: the daily rate is gamma distributed with mean $\alpha_{pt}$ and variance $\max\{\beta^2_{pt}-\alpha_{pt},0\}$, and trucks arrive as a Poisson process within the day, so the daily count has variance $\max\{\beta_{pt}^2,\alpha_{pt}\}$. A stress test instead uses a renewal process with gamma inter-arrival times whose squared coefficient of variation is $\beta^2_{pt}/\alpha_{pt}$, so that the variance-rate approximation $\beta^2_{pt}\Delta$ also holds within the day. Each case is simulated for 400 working days after a 30-day warm-up.

\emph{Measures.} The cycle service level is the share of replenishment cycles at a swapping hub, between successive batch receipts, that end without a backordered request, which is the per-cycle, per-location measure targeted by~(4); the pack fill rate is the share of packs supplied immediately from the shelf.

Table~B.1 of the paper reports the results. Shared-path dependence has a negligible effect: on the pools with a shared path, common streams change cycle service by less than 0.1 percentage points on average relative to independent streams, and by at most 1.6 points at any hub, in line with the covariance bound of Section~\ref{app:supp-pool-cov}. Under day-to-day variability, the pooled stock meets the target at every co-located hub and, with the overshoot allowance, at every remote hub with more than 150 packs per day; the target is met at 89\% of the remote hubs with 50 to 150 packs per day and at 19\% of those below 50. The margin at high-volume hubs reflects the variance-rate approximation: over windows of two to five hours, the simulated demand variance is 14\% to 50\% of $\Delta\sigma^2$, and the window demand stays below the normal 95\% quantile in 98.7\% to 100\% of windows (five representative hubs in the median scenario), because day-to-day rate variability materializes only partly within a few hours. At thin hubs this margin vanishes: a single truck takes two or three packs, which can overshoot the reorder point by up to one request, and the Poisson count variance exceeds $\beta^2$ on paths with fewer than nine trucks per day; at a hub with 19 packs per day, the simulated window variance is 149\% of $\Delta\sigma^2$ and the normal quantile covers 90.8\% of windows.

The stress test separates the two buffers of~(8). For co-located hubs, whose service depends on the pooled buffer alone, cycle service remains close to the target (97\%): the variance of window demand is 82\% to 90\% of $\Delta\sigma^2$, and the window demand falls below the normal 95\% quantile in 94.0\% to 95.6\% of windows. Remote hubs reach 83\% to 86\%, because a cycle is stockout-free only if both the pooled buffer at the charging hub and the local return-leg buffer suffice, and each is sized at the 95\% quantile; the charging hub dispatches 88\% to 90\% of remote requests at once. The inequality in Section~3.2.1, which shows that splitting a buffer does not reduce the total stock relative to a single round-trip buffer, therefore does not carry over to the per-hub service event of a two-location policy when variability accrues within the day. Sizing the two buffers of remote links jointly, for example each at $\Phi^{-1}(1-\varepsilon^C/2)$, would address this case but changes~(8) and every reported plan, and is left for future work.

\section{Extensive form without the exact reformulations}
\label{app:ext-original}

Table~\ref{tab:ext-original} reports the extensive form as first written. Assignment, batch and dispatch variables are defined on all hub pairs, with off-link variables fixed at zero; zero-demand links are retained, the pooled-variance row is kept in its nonconvex quadratic form, and fleet variables are defined per scenario. Gurobi solves this formulation as a nonconvex mixed-integer quadratic program, with 3.2 million rows and 19.2 million columns on SE-5. The runs use the 3-hour limit and $0.01\%$ target of Section~6.2, with 12 threads on SE-1 to SE-3, which favors this formulation relative to the 4-thread runs of Section~6.2, and, for memory reasons, 4 threads on SE-4 and SE-5. The formulation reaches 1\% on the three smaller instances but 0.1\% on none. On SE-4 and SE-5, branch and bound does not leave the root, the incumbents are about twice the optimum, and the gap after three hours exceeds 50\%. With the same 4 threads, the strengthened formulation of Section~6.2 ends SE-4 at a gap of $0.92\%$. On SE-5 the strengthened formulation was run with a 33\,GB soft memory limit and approximate minimum-degree ordering in the barrier method, because with the default ordering the root relaxation exceeded the memory limit before the first barrier iteration; the barrier method stalled with numerical difficulties after 152 iterations.

\begin{table}[!htbp]
\centering
\caption{Extensive form without the exact reformulations ($|\Omega|=50$, 3-hour limit; 12 threads on SE-1 to SE-3 and 4 threads on SE-4 and SE-5; costs in \$K, 15-year NPV).}
\label{tab:ext-original}
\small
\begin{tabular}{l rrrr}
\toprule
Instance & t$\,\to$1\% & Final gap & Best UB & Lower bound \\
\midrule
SE-1 & 76s & 0.42\% & 14{,}334.4 & 14{,}274.6 \\
SE-2 & 10.7m & 0.70\% & 32{,}648.5 & 32{,}420.9 \\
SE-3 & 2.14h & 0.47\% & 57{,}776.8 & 57{,}505.1 \\
SE-4 & --- & 50.1\% & 167{,}736.6 & 83{,}771.5 \\
SE-5 & --- & 52.7\% & 234{,}064.9 & 110{,}789.2 \\
\bottomrule
\end{tabular}
\end{table}

\section{Derivations for the capacity model}
\label{app:supp-derivations}
This section gives the derivations behind two relationships of Section~3 of the paper: the covariance between hub demand streams that the pooled battery buffer omits, and the linear constraints and subproblem that represent the fleet requirement.

\subsection{Covariance between hub demand streams}
\label{app:supp-pool-cov}
A path that visits two hubs $h$ and $k$ assigned to the same charging hub $h'$ makes their demands dependent. Let $\tau^p_{hk}$ denote the signed travel time from $h$ to $k$ along $p$, negative if $k$ precedes $h$, so that the trucks counted at $k$ during its window passed $h$ a time $\tau^p_{hk}$ earlier. Arrivals on different paths remain conditionally independent, so the covariance of the pack demands of $h$ and $k$ over their replenishment windows is a sum of shared-path terms. Given the marginal variances of~(7), the Cauchy--Schwarz inequality bounds each term by $\beta^2_{pt}(\omega)R_{ph}(\omega)R_{pk}(\omega)\sqrt{\Delta_{hh'}\Delta_{kh'}}$, whatever the temporal dependence along the path. Under the variance-rate approximation with independent increments and one-day variance $\beta^2_{pt}(\omega)$, the covariance is
\begin{equation}
\begin{aligned}
\mathrm{Cov}_{hk,h't}(\omega)&=\sum_{p\in P(h)\cap P(k)\cap\mathcal P_t(\omega)}\beta^2_{pt}(\omega)R_{ph}(\omega)R_{pk}(\omega)\,\ell^{p,h'}_{hk},\\
\ell^{p,h'}_{hk}&=\Bigl[\min\{0,-\tau^p_{hk}\}-\max\{-\Delta_{hh'},\,-\Delta_{kh'}-\tau^p_{hk}\}\Bigr]_+,
\end{aligned}
\label{eq:pool-cov}
\end{equation}
for $h,k\in I_{h'}$, where $\ell^{p,h'}_{hk}$ is the overlap, in days, of the two windows once the window of hub $k$ is shifted by the travel time. The pooled term of~(8) omits these nonnegative covariances and therefore understates the pooled variance whenever two hubs of a pool share a path. The independence approximation is therefore not conservative by construction; Appendix~B of the paper quantifies its effect for the SE-5 plan.

\subsection{Fleet-sizing constraints and fleet subproblem}
\label{app:supp-fleet}
For the scenario-active paths $p\in\mathcal P_t(\omega)$, let $o_{pht}(\omega)$ denote the incremental busy-truck requirement of path $p$ at hub $h$ and $n^v_{t,p}(\omega)$ the incremental fleet provisioned for path $p$. The fleet requirement~(11) is represented by the linear constraints
\begin{subequations}
\label{eq:Truck-all2}
\begin{alignat}{2}
&\tilde r_{pht}(\omega)\ge0,
&\quad&\forall p\in\mathcal P_t(\omega),\ \forall h\in\mathcal H_p,\ \forall t\in\mathcal T,
\label{eq:truck-1}\\[0.2em]
&\tilde r_{pht}(\omega)\ge
T^sx_{ht}^s+T^{c,\mathrm{fast}}(1-x_{ht}^s)R_{ph}(\omega)-T_{hp}^r,
&&\forall p\in\mathcal P_t(\omega),\ \forall h\in\mathcal H_p,\ \forall t\in\mathcal T,
\label{eq:truck-1b}\\[0.2em]
&o_{pht}(\omega)\ge
\Bigl(\alpha_{pt}(\omega)+\Phi^{-1}(1-\varepsilon^C)\beta_{pt}(\omega)\Bigr)
\tilde r_{pht}(\omega),
&&\forall p\in\mathcal P_t(\omega),\ \forall h\in\mathcal H_p,\ \forall t\in\mathcal T,
\label{eq:truck-2}\\[0.2em]
&n_{t,p}^v(\omega)\ge
\frac{\sum_{h\in\mathcal H_p}o_{pht}(\omega)}{O_c},
&&\forall p\in\mathcal P_t(\omega),\ \forall t\in\mathcal T.
\label{eq:truck-3}
\end{alignat}
\end{subequations}
Constraints~\eqref{eq:truck-1}--\eqref{eq:truck-1b} are the epigraph form of~(10);~\eqref{eq:truck-2} converts the safety-adjusted offered load (trucks per day) times the incremental dwell (days) into busy trucks; and~\eqref{eq:truck-3} converts busy trucks into provisioned trucks, priced at the annual cost $M^v_t$ of a driver--truck unit. Because the fleet cost is minimized and these variables are bounded only from below, the constraints bind at optimality and reproduce~(11).

Accordingly, the fleet recourse of path $p$ in Section~4 of the paper is the linear program
\begin{subequations}
\label{eq:GtpV}
\begin{alignat}{2}
G_{t,p}^{\mathrm V}(x^s;\omega)
:=\;&
\min_{n_{t,p}^v(\omega),\,\tilde r_{p\cdot t}(\omega),\,o_{p\cdot t}(\omega)}
\ \delta_tM_t^v n_{t,p}^v(\omega)
&\label{eq:GtpV-obj}\\[1mm]
\text{s.t.}\quad
&\eqref{eq:truck-1}\text{--}\eqref{eq:truck-3},
&\notag\\
&n_{t,p}^v(\omega)\ge0,
\quad \tilde r_{pht}(\omega),o_{pht}(\omega)\ge0,
&\quad \forall h\in\mathcal H_p.
\label{eq:GtpV-nonneg}
\end{alignat}
\end{subequations}
where $\tilde r_{p\cdot t}(\omega)$ and $o_{p\cdot t}(\omega)$ collect the incremental-dwell and busy-truck variables of path $p$ at its hubs.

\end{document}